\newtheorem{thm}{Theorem}[section]
\newtheorem{lem}[thm]{Lemma}
\newtheorem{prop}[thm]{Proposition}
\newtheorem{rem}{Remark}
\newtheorem*{ex}{Example}
\def\sumstar{\sideset{}{^*}\sum}
\newcommand{\FN}{F_N} 
\newcommand{\Z}{{\mathbb Z}}
\newcommand{\Q}{{\mathbb Q}}
\newcommand{\R}{{\mathbb R}}
\newcommand{\OO}{{\mathcal O}}
\newcommand{\E}{{\mathbb E}}
\newcommand{\T}{{\mathbb T}} 
\newcommand{\one}{\mathbbm{1}}
\newcommand{\tmop}[1]{\ensuremath{\operatorname{#1}}}
\newcommand{\tmod}[1]{\ (\tmop{mod}\ #1)}
\def\pamod{\! \! \! \! \pmod}
\newcommand{\Mod}[1]{\ (\mathrm{mod}\ #1)}
\numberwithin{equation}{section}
  \newcommand{\fixmepk}[1]{}
\title{Poisson spacing statistics for lattice points on circles}
\author{P\"ar Kurlberg and Stephen Lester}
 \address{Department of Mathematics, KTH, 
 SE-100 44 Stockholm, Sweden}
 \email{kurlberg@math.kth.se}
\address{Department of Mathematics, King's College London, London WC2R 2LS, UK}
 \email{steve.lester@kcl.ac.uk}
\date{\today}
\begin{document}
\begin{abstract}
  We show that along a density one subsequence of admissible radii,
  the nearest neighbor spacing between lattice points on circles is
  Poissonian.
\end{abstract}
\maketitle

\section{Introduction}


For a sequence of real numbers $(a_n)$  and integer $N \ge 1$ the probability measure given by
\[
\nu_N= \frac{1}{N} \sum_{1 \le n \le N} \delta( \{ a_n\}),
\]
where $\delta(\cdot)$ denotes the Dirac delta function and $\{a_n\}$
is the fractional part of $a_n$, describes the distribution of the
points $( a_n)_{n=1}^N$ within $\mathbb T=\mathbb R/\mathbb Z$ and we
say that $( \{a_n\})$ is uniformly distributed modulo $1$ if $\nu_N$
weakly converges to the Lebesgue measure $\lambda$ on $\mathbb T$ as
$N \rightarrow \infty$.  A classical result of Weyl provides the
following criterion: $( \{a_n\})$ is uniformly distributed if and only
if the Fourier coefficients of $\nu_N$ converge pointwise to those of
$\lambda$ as $N \rightarrow \infty$. 

A more refined study of the behavior of a sequence modulo $1$ examines the local spacing statistics of the sequence, that is how the sequence is distributed at the scale $1/N$, which
provides insight into how the elements of the sequence
are spaced together. 
Computing the local spacing statistics of a given sequence is in general a challenging problem. Two important examples include the nearest neighbor spacing statistics of the sequence
consisting of the ordinates of zeros of the Riemann zeta-function as well as the sequence of
energy levels of quantized Hamiltonians; both of these are subjects of well-known open conjectures. On the other hand
there are some notable instances where the local spacing statistics
have, partially or fully, been computed, e.g. see
\cite{Hooley1,Hooley2, Hooley3, 
  rudnick-zaharescu, chaubey-yesha, kurlberg-rudnick, boca-zaharescu,
  BCZ01, ABCZ}, including works on angles of Euclidean lattice points
visible from the origin \cite{BCZ00,marklof-strombergsson}.

In this article we study the spacing of angles arising from
$\Z^2$-lattice points lying on circles.  Due to the symmetry of lattice
points under rotation by $\pi/2$ it suffices to consider their angles
modulo $\pi/2$ and given a circle of radius $\sqrt{n}$ we write
$r(n)=\tfrac14\#\{ \vec x \in \mathbb Z^2: |\vec x|^2=n\}$.  For
$\vec x=(x,y) \in \mathbb Z^2$ we write
$\theta_{\vec x}:= \tmop{arg}(x+iy) \Mod{\pi/2}$
and $\mathcal S$ for the set of natural numbers, which can be written
as a sum of two integer  squares. Given $n \in \mathcal S$ with $r(n)=N$, we
define the probability measure $\varrho_N$ on $[0,\pi/2)$ by 
\[
\varrho_N=\frac{1}{4N} \sum_{\substack{\vec x \in \mathbb Z^2 \\ |\vec x|^2=n}} \delta(\theta_{\vec x}).
\]
A striking result independently due to K\'{a}tai and K\"{o}rnyei
\cite{katai-kornyei} and Erd\H{o}s and Hall \cite{erdos-hall} shows
that there exists a density one subsequence 
$\mathcal S_0 \subset \mathcal S$ such that the angles
$\theta_{\vec x}$ with $|\vec x|^2 \in \mathcal S_0$ very strongly
equidistribute as $|\vec x| \rightarrow \infty$, in the sense that for
$N_j \in \mathcal S_0$
 \[
 \sup_{I \subset \mathbb R/(\frac{\pi}{2} \mathbb Z)} \bigg|\varrho_{N_j}(I)-\frac{|I|}{\pi/2}\bigg| \ll \frac{1}{N_j^{\log (\pi/2)/\log 2-\varepsilon}}
 \]
 where $\log (\pi/2)/\log 2=0.651496129\ldots$, for any
 $\varepsilon>0$.  The restriction to a density one
 subsequence of $\mathcal S$ is best possible since in the case where
 $|\vec x|^2$ is a prime congruent to $1 \Mod 4$ there are only two
 angles $\tmod{\pi/2}$; in fact, the set of possible limiting measures
 is very rich even after restricting to circles having a growing number
 of lattice points on them (cf. \cite{attainable}.)  Interestingly, the result above shows at scales $N^{-\gamma}$ with $\tfrac12< \gamma <\log(\pi/2)/\log 2$ that
   \begin{figure}[h]
    \centering
    \includegraphics[width=1 \textwidth]{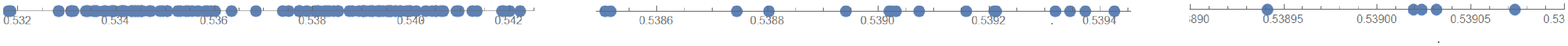}
    \caption{Three plots of angles of lattice points lying on the circle of radius $\sqrt{n}$ with $n=3.7366813\cdot 10^{35}$ and of $N=r(n)=2^{14}$ nearby $0.5390$  with windows of length $1/100,1/1000,1/5000$, respectively. Note that $N^{-1/2}=0.0078125\ldots$, $N^{-\log (\pi/2)/( \log 2)}=0.0017960\ldots$, and $N^{-1}=0.0000610\ldots$.
}
    \label{fig:mesh1}
\end{figure}
 lattice points on circles do not behave like independent identically distributed (iid) random variables, 
 uniformly distributed on $[0,1]$, $U_n$, $n=1,2,\ldots$, for which the Chung-Smirnov law of the iterated logarithm \cite{chung,smirnov} gives almost surely (a.s.) that
\[
\limsup_{N \rightarrow \infty} \frac{\sqrt{N}}{\sqrt{2 \log \log N}}  \sup_{I \subset \mathbb T} \big|\nu_N(I)-|I|\big| =1.
\]
  \begin{figure}[h]
    \centering
    \includegraphics[width=1 \textwidth]{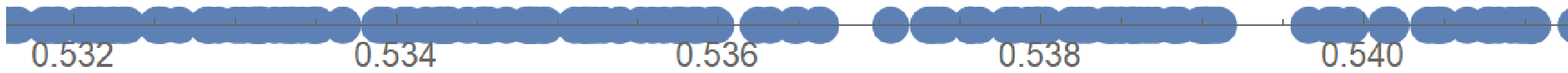}
    \caption{$2^{14}$ random points in $[0,1]$. Three plots of the points that lie nearby $0.5390$ with windows of length $1/100,1/1000,1/5000$, respectively.
}
    \label{fig:mesh2}
\end{figure}
While behaving differently at intermediate scales, at the local scale,
$1/N$, numerical evidence indicates that lattice points on circles
appear to typically display Poissonian statistics (cf. Figure
\ref{fig:mesh3} below), which would coincide with the known behavior
of $U_n$ at this scale (cf. \cite[\S 1.7]{feller-probability-vol2})).  
  \begin{figure}[h]
    \centering
    \includegraphics[width=1 \textwidth]{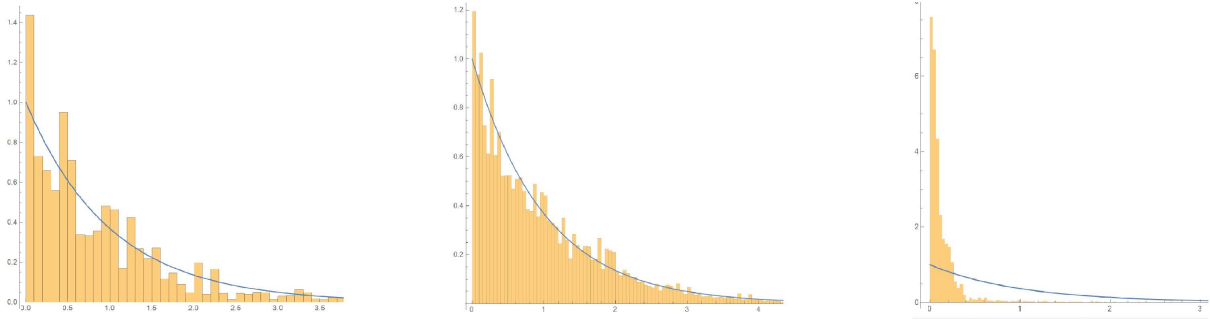}
    \caption{Histogram plots of the nearest neighbor spacing of $2^{14}$ angles of lattice points on the circle of radius $\sqrt{n}$ with $n=3.7366813\cdot 10^{35}$ (left), $2^{20}$ angles with $n=2.1957869 \cdot 10^{54}$ (center) and $2^{16}$ angles with $n=9.1943528\cdot 10^{63}$ (right), with plot of $e^{-s}$ (solid lines). The plot on the right is atypical; here $n$ is a product of primes of the form $m^2+1$.
}
    \label{fig:mesh3}
\end{figure}

We prove that the nearest neighbor spacing of angles of lattice points on circles is Poissonian along a density one subsequence of admissible radii. 
%
Listing the lattice points on the circle of radius $\sqrt{n}$ as $0 \le \theta_1 <\theta_2< \cdots < \theta_N < \pi/2$,
we define the 
nearest neighbor spacing measure on $\mathbb R_{\ge 0}$ by 
\begin{equation}
  \label{eq:spacing-measure-def}
\mu_{N}=\frac{1}{N-1}\sum_{1 \le j \le N-1} \delta(N(\theta_{j+1}-\theta_j) ).  
\end{equation}

\begin{thm} \label{thm:poisson} There exists a density one subsequence
  $\{ n_j \} \subset \mathcal S$ such that for any interval $I \subset 
  \mathbb R_{\ge 0}$ and $N_j=r(n_j)$ as $j \rightarrow \infty$ we
  have  
\[
\mu_{N_j}(I) = \int_{I} e^{-s} \, ds+o_{I}(1).
\]

\end{thm}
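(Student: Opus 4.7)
The plan is to establish Poisson spacings by showing that all $m$-level correlation functions of the angles $\theta_{\vec x}$ at scale $1/N$ converge to the Poisson (Lebesgue) limits along a density-one subsequence, from which the nearest-neighbor statement follows by the standard inclusion--exclusion identity relating Janossy densities to correlation measures. Concretely, for $f \in C_c^\infty(\mathbb R^{m-1})$, I consider
\[
R_m(f; n) := \frac{1}{N} \sum_{\substack{\vec x_1, \ldots, \vec x_m\\ \text{distinct},\, |\vec x_i|^2 = n}} f\bigl(N(\theta_{\vec x_1} - \theta_{\vec x_2}), \ldots, N(\theta_{\vec x_{m-1}} - \theta_{\vec x_m})\bigr),
\]
aiming to prove, for each $m \ge 2$ and each such $f$, that $R_m(f; n_j) \to \int_{\mathbb R^{m-1}} f$ along a density-one subsequence $\{n_j\} \subset \mathcal S$; a Mehta-type expansion of empty-interval probabilities in correlation functions then upgrades this to $\mu_{N_j}(I) \to \int_I e^{-s}\, ds$.

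The arithmetic of the Gaussian integers reduces the computation of $R_m$ to a counting problem. I would restrict to the density-one subset $\mathcal S' \subset \mathcal S$ of $n$ that are odd, squarefree, divisible only by primes $\equiv 1 \Mod 4$, and satisfy $\omega(n) \sim \log \log n$. For such $n$, writing $n = p_1 \cdots p_k$ (with $k = \omega(n)$) and $p_i = \pi_i \bar\pi_i$ in $\mathbb Z[i]$, one has $N = r(n) = 2^k$ and each lattice point corresponds to a vector $\vec\epsilon \in \{0,1\}^k$ via
\[
\theta_{\vec\epsilon} \equiv \sum_{i=1}^{k} (2\epsilon_i - 1)\,\alpha_i \pmod{\pi/2}, \qquad \alpha_i := \arg(\pi_i).
\]
The correlation $R_m$ thus becomes a weighted count of $(m-1)$-tuples of ``difference vectors'' $\vec c^{(\ell)} \in \{-2, 0, 2\}^{k}$ whose images $\sum_i c^{(\ell)}_i \alpha_i \pmod{\pi/2}$ fall into prescribed windows of length $O(1/N)$. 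Fourier-expanding $f$ on $(\mathbb R/\tfrac{\pi}{2}\mathbb Z)^{m-1}$ reduces the task to estimating, on average over $n \in \mathcal S'$, the exponential sums
\[
T_h(n) := \sum_{\vec\epsilon \in \{0,1\}^{k}} e\bigl(\tfrac{2h}{\pi}\theta_{\vec\epsilon}\bigr) = 2^{k} \prod_{i=1}^{k} \cos(4 h \alpha_i),
\]
which factor multiplicatively over the prime factors of $n$; the $h=0$ term produces the Poisson main term $\int f$, while nonzero frequencies must contribute negligibly.

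The principal obstacle is controlling $T_h(n)$ uniformly for $0 < |h| \le N \sim 2^{\omega(n)}$ along a density-one subsequence. For fixed $h$, Hecke's equidistribution of Gaussian prime angles yields $\mathbb E_p[|\cos(4h\alpha_p)|] < 1$, so the multiplicative structure gives $|T_h(n)| \ll N \rho_h^{\omega(n)}$ with $\rho_h < 1$ for density-one $n$---easily enough when $h$ is fixed. To handle the full range of $h$, which exponentially exceeds the pointwise regime, I would combine a large-sieve second-moment estimate for $\sum_{n \le x,\, n \in \mathcal S'} \sum_{1 \le |h| \le N(n)} |T_h(n)|^2$ with a Tur\'an--Kubilius decomposition isolating those ``resonant'' frequencies $h$ whose Hecke Gr\"ossencharakter pairing with the Gaussian primes above the prime factors of $n$ is atypically large; the resonant set is sparse on average, and near-independence of the angles $\alpha_i$ across distinct Gaussian primes then yields cancellation exponential in $\omega(n)$, beating the $2^{\omega(n)}$ denominator with a power-of-log margin. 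Managing the combinatorial explosion of coefficient vectors $\vec c \in \{-2, 0, 2\}^{\omega(n)}$ (a set of size $3^{\omega(n)} \gg N$) while maintaining this uniformity in $h$ constitutes a substantial strengthening of the one-point equidistribution of K\'atai--K\"ornyei and Erd\H{o}s--Hall from discrepancy $o(1)$ to effective discrepancy $o(1/N)$, which is where the bulk of the technical work would lie.
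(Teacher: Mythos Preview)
Your overall plan --- reduce Theorem~\ref{thm:poisson} to showing Poisson $r$-level correlations along a density-one subsequence, then deduce spacings via inclusion--exclusion --- matches the paper exactly, as does the arithmetic setup (restricting to squarefree $n$ with only split prime factors, writing angles as signed sums of Gaussian prime arguments, Fourier-expanding). The gap is in what comes next.

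Your assertion that ``the $h=0$ term produces the Poisson main term $\int f$, while nonzero frequencies must contribute negligibly'' is where the proposal breaks down. After Poisson summation the (non-distinct) $r$-correlation becomes
\[
\frac{1}{N^{r}}\sum_{\vec k\in\Z^{r-1}}\widehat f\Bigl(\frac{\vec k}{N}\Bigr)\prod_{j=0}^{r-1}\lambda_{4(k_{j+1}-k_j)}(n),
\]
and the paper shows via the Landau--Selberg--Delange method that on average over $n$ this product behaves like $(2\alpha(\vec k))^{M}$ with $\alpha(\vec k)\ge 1$ for \emph{every} $\vec k$ --- in the paper's words, ``there are poles everywhere''. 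The $\vec k$-space decomposes into exponentially many (in $r$) subspaces $V^{\star}$, each contributing a term of the same order as the putative main term. Already for $r=2$ the $k\neq 0$ contribution averages to $f(0)$, not $o(1)$; it cancels only after sieving to distinct angles. For $r\ge 3$ this cancellation pattern becomes a genuinely intractable combinatorial problem, and your proposed large-sieve/Tur\'an--Kubilius bound on $\sum_{h}|T_h(n)|^{2}$ is the pair-correlation variance only --- it says nothing about the products of $r$ (or, for the variance, $2r$) factors $\lambda_{4(k_{j+1}-k_j)}(n)$ that actually arise. Note also that if you insist on Fourier-expanding the \emph{distinct} sum $R_m^{*}$ directly, the inner sum no longer factors over primes, so your formula $T_h(n)=2^{k}\prod_i\cos(4h\alpha_i)$ is not what appears.

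The paper's resolution is not to bound the off-diagonal frequencies at all, but to introduce a random model (replace each $\theta_p$ by an independent uniform $\vartheta_p$), use LSD to show that both the mean and the second moment of the deterministic $R_{r}$ agree with those of the random $\mathcal R_{r}$ up to $o(1)$, and then evaluate the random model's correlations directly by a probabilistic counting argument (Section~\ref{sec:poiss-corr-hybr}) that sidesteps the combinatorics entirely. This matching-to-a-random-model step --- rather than any pointwise or averaged bound on individual $T_h$ --- is the essential idea missing from your proposal.
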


The restriction to a density one subsequence in our result is
essentially best possible, since there exist subsequences
$\{n_j\} \subset \mathcal S$, for which all the angles are highly
localized near $0$ or $\pi/2$. This remains true even if one requires
$r(n_j) \rightarrow \infty$ as $n_j \rightarrow \infty$ (cf. the plot
on the right in Figure \ref{fig:mesh3}.)  In fact, this construction
can easily be made into a rigorous argument showing that there exist
subsequences of elements of $\mathcal S$ for which the nearest
neighbor spacing measure is given by a delta mass supported at
zero.

Moreover, we are also able to compute the joint limiting
distribution. Consider the measure on $\mathbb R_{\ge 0}^{\ell}$ given
by 
\[
\mu_{\ell,N}=\frac{1}{N-\ell} \sum_{1 \le n \le N-\ell} \delta(N(\theta_{j+1}-\theta_j, \theta_{j+2}-\theta_{j+1}, \ldots, \theta_{j+\ell}-\theta_{j+\ell-1})).
\]

\begin{thm} \label{thm:poissonjoint} There exists a density one
  subsequence $\{ n_j \} \subset \mathcal S$ such that for
  any intervals
  $I_1,\ldots, I_{\ell} \subset \mathbb R_{\ge 0}$, where
  $\ell \in \mathbb N$ is fixed, and $N_j=r(n_j)$ as
  $j \rightarrow \infty$ we have
\[
\mu_{\ell,N_j} (I_1,\ldots, I_{\ell}) = \int_{I_1 \times \cdots \times
  I_{\ell}} e^{-(s_1+\cdots +s_{\ell})} \, ds_1 \cdots
ds_{\ell}+o_{I_{1},\ldots, I_{\ell}}(1).
\]
\end{thm}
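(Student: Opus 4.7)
The plan is to deduce Theorem~\ref{thm:poissonjoint} via the method of correlations, by showing that on a density one subsequence of $\mathcal S$ all $m$-point joint intensities of the rescaled angle set $\{N\theta_j\}$ converge to those of a homogeneous Poisson point process on $[0,N\pi/2)$. A standard moment-method translation (inclusion-exclusion on counts in prescribed windows of length $O(1/N)$) then upgrades Poisson correlations to convergence of joint $\ell$-spacings, yielding the product exponential $e^{-(s_1+\cdots+s_\ell)}\,ds_1\cdots ds_\ell$. In particular, Theorem~\ref{thm:poissonjoint} comes essentially for free once one has established the higher-correlation bounds already needed for Theorem~\ref{thm:poisson}: the difference between the two theorems lies only in the level $m$ of correlation one tracks, not in the nature of the arguments.

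The key algebraic step is to parametrize lattice points using Gaussian integer factorizations. For $n = 2^{a_0}\prod_i p_i^{a_i}\prod_j q_j^{2b_j}$ with $p_i\equiv 1 \pmod 4$ splitting as $p_i = \pi_i\overline{\pi_i}$ and $q_j\equiv 3 \pmod 4$ inert, every lattice point on $|\vec x|^2=n$ (up to the order-$4$ rotation group) corresponds to a choice $(e_1,\ldots,e_k)\in \prod_i\{0,1,\ldots,a_i\}$, and its angle modulo $\pi/2$ equals
\[
\theta \equiv \sum_i (2e_i - a_i)\alpha_i \pmod{\pi/2}, \qquad \alpha_i := \arg(\pi_i).
\]
The $m$-point correlation thus reduces to counting $m$-tuples $(\vec e^{(1)},\ldots,\vec e^{(m)})$ for which the pairwise differences $\theta^{(a)}-\theta^{(1)}$ land in prescribed intervals of length $O(1/N)$.

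Expanding the interval indicators via Fourier analysis on $\mathbb R/(\pi/2)\mathbb Z$ (with smooth majorants and minorants), this count becomes a weighted sum of exponential sums
\[
S(\vec c) = \sum_{e_1,\ldots,e_k} e\!\Big(\sum_i c_i(2e_i-a_i)\alpha_i\Big) = \prod_i \frac{\sin\bigl((a_i+1)c_i\alpha_i\bigr)}{\sin(c_i\alpha_i)}\cdot e(\cdots),
\]
indexed by integer vectors $\vec c=(c_1,\ldots,c_k)$, with Fourier weights of size $O(1/N)$. The zero frequency $\vec c=\vec 0$ contributes the main term $|J_1|\cdots|J_{m-1}|$, matching the Poisson prediction, so the task is to bound the contribution of $\vec c\ne\vec 0$ on a density one collection of $n\in\mathcal S$.

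The main obstacle, inherited from the proof of Theorem~\ref{thm:poisson}, is controlling these exponential sums uniformly across the range of $\vec c$ of bounded height: one must rule out Diophantine resonances among the angles $\alpha_i$ of split Gaussian prime factors of $n$. For each individual $\vec c$, the exceptional set of $n\in\mathcal S$ with $|S(\vec c)|$ anomalously large has zero density (using equidistribution of $\arg(\pi)$ over Gaussian primes together with control on $\omega(n)$ and on the exponents $a_i$ for typical $n$); the delicate point is to patch these exceptional sets together across all $\vec c$ while retaining a single density one subsequence. This is achieved in the spirit of the Erd\H{o}s--Hall / K\'atai--K\"ornyei arguments, after restricting to $n$ with many split prime factors of moderate exponent and avoiding pathological configurations such as $n$ composed mainly of primes of the form $m^2+1$ (where the angles collapse, as in the rightmost plot of Figure~\ref{fig:mesh3}). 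Once the exponential sum bounds hold for every fixed $m$, the moment-method translation from Poisson correlations to joint gaps yields Theorem~\ref{thm:poissonjoint}.
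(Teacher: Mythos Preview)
Your overall strategy --- reduce to showing Poisson $r$-correlations on a density one subsequence, then invoke the standard combinatorial translation to joint gaps --- matches the paper's, and you are right that Theorem~\ref{thm:poissonjoint} follows from the same correlation input as Theorem~\ref{thm:poisson}.  The paper's concluding section does exactly this.

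However, your sketch of how to obtain the Poisson correlations has a genuine gap at the point you yourself flag as ``delicate''.  You propose to show, for each nonzero frequency, that the exceptional set of $n$ has density zero, and then to ``patch these exceptional sets together \ldots\ in the spirit of the Erd\H{o}s--Hall / K\'atai--K\"ornyei arguments''.  This does not work at the local scale.  The Erd\H{o}s--Hall and K\'atai--K\"ornyei equidistribution results (as the paper discusses in the introduction) control discrepancy only to accuracy $N^{-\gamma}$ with $\gamma < \log(\pi/2)/\log 2 \approx 0.65$, whereas Poisson correlations require control at scale $1/N$.  The paper obtains this via an entirely different mechanism: it computes the mean \emph{and second moment} of $R_r(n;F_N)$ over $n$ in a fixed fiber $\{r(n)=N\}$ using the Landau--Selberg--Delange method applied to products of Hecke $L$-functions $L(s,8k_{\mathbf S})$, decomposes $\mathbb Z^{r-1}$ according to the pole-order function $\alpha(\vec k)$, and then --- crucially --- matches the resulting combinatorial expression to an auxiliary random model whose correlations can be computed directly.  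The density one subsequence comes from Chebyshev once the variance is shown to be $o(1)$.  None of this machinery appears in your sketch.

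Separately, your exponential sum $S(\vec c)$ is indexed by $\vec c \in \mathbb Z^k$ with $k$ the number of split prime factors of $n$; but $k$ varies with $n$, so this cannot be the Fourier variable in a uniform analysis.  The correct Fourier variables are $\vec k \in \mathbb Z^{r-1}$ dual to the $r-1$ angular differences (as in the paper's \eqref{eq:rcorrdef-intro}), and the sum over lattice points factors as $\prod_{j=0}^{r-1}\lambda_{4(k_{j+1}-k_j)}(n)$.  Your displayed product formula does not correspond to this object.
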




We remark that the methods can be adapted to work for
``thinner'' subsequences inside $\mathcal S$, e.g. giving Poisson
spacings in a density one subsequence of square free integers $n \in S$
having exactly $\lfloor \tfrac12 \log \log n \rfloor$ prime factors.

\subsection{Discussion of the proof}
As shown in \cite{kurlberg-rudnick, katz-sarnak}, to show the nearest
neighbor spacings have Poissonian statistics it suffices to compute
the $r$-level correlations of the angles. As in previous works, we
will work with a smoothed version of the $r$-correlation function and
given a Schwartz function  $f:\mathbb
R^{r-1} \rightarrow \mathbb R$ we define 
\begin{equation} \label{eq:Fform}
\begin{split}
  F_N(x_1,\ldots,x_{r-1})=&
  \sum_{j_1,\ldots, j_{r-1} \in \mathbb Z}
  f\left( \frac{N}{\pi/2}\left(x_1+j_1 \cdot \frac{\pi}{2}, \ldots,
      x_{r-1} +j_{r-1} \cdot \frac{\pi}{2}\right) \right) \\
  =& \frac{1}{N^{r-1}}  \sum_{k_1,\ldots, k_{r-1} \in \mathbb Z}
  \widehat f\left(\frac{k_1}{N},\ldots, \frac{k_{r-1}}{N}\right)
  e^{4i(k_1x_1+\cdots+k_{r-1}x_{r-1})}
\end{split}
\end{equation}
where the second equality follows from applying the Poisson summation formula.
The $r$-correlation of the angles $0 \le \theta_1 < \theta_2 < \ldots
< \theta_N < \pi/2$ arising from lattice points on the circle of
radius $\sqrt{n}$ is defined by
\[
  R_r(n;F_N)
  =
  \frac{1}{N} \sum_{1 \le \ell_1,\ldots, \ell_r \le N } F_N(\theta_{\ell_1}-\theta_{\ell_2},\theta_{\ell_2}-\theta_{\ell_3},\ldots,\theta_{\ell_{r-1}}-\theta_{\ell_r}).
\]
We make no further assumption on $f$ and using a standard
combinatorial sieving argument we can relate the $r$-correlation over
\textit{distinct} angles to $r'$-correlations over all angles, as
above, with $r' \le r$.
To evaluate the $r$-correlation we will apply \eqref{eq:Fform} to see that
\begin{equation}\label{eq:rcorrdef-intro}
R_r(n;F_N)=\frac{1}{N^r} \sum_{\vec k \in \mathbb Z^{r-1}} \widehat
f\bigg(\frac{\vec k}{N} \bigg)
\lambda_{-4k_1}(n)\lambda_{4(k_1-k_2)}(n)\cdots
\lambda_{4(k_{r-2}-k_{r-1})}(n)\lambda_{4k_{r-1}}(n) 
\end{equation}
where $\lambda_{4k}(n)=\sum_{1 \le \ell \le N} e^{-i 4k\theta_{\ell}}$
is the $4k$th Fourier coefficient of the measure
$\sum_{1 \le \ell \le N} \delta(\theta_{\ell})$.
(Note that the set of lattice points on a circle is invariant under
rotation by $\pi/2$, hence the $k$th Fourier coefficient of the
measure is zero, unless $4 | k$. Further, this order four symmetry
implies that any $r$-tuple of distances occurs with multiplicity
divisible by four. Also note that the additional symmetry given by
complex conjugation will increase the multiplicities of pairwise
differences, but no such effect occurs for $r$-tuples of differences
for $r>2$.)

Our first step is to understand the average of $R_r(n;F_N)$ over $n$. To explain our approach, 
let us first consider the pair correlation $R_2(n;F_N)$. 
Our strategy allows us to freeze the value of $N$ and let $n$ vary
over all $n \le x$ with $r(n)=N$ for each $N \le (\log x)^{O(1)}$ (which is not very restrictive since $r(n)=(\log n)^{\frac{\log 2}{2}+ o(1)}$ for a density one subset of
$\mathcal S$.) 
At this point we use the Landau-Selberg-Delange method to evaluate the
mean value of $\lambda_{-4k}(n)\lambda_{4k}(n)=\lambda_{4k}(n)^2$ over
$n \le x$ with $r(n)=N$. Here we rely on the fact that
$L(s,4k)=\sum_{n \ge 1} \lambda_{4k}(n) n^{-s}$ is a Hecke
$L$-function and analogously to the Riemann zeta-function the function
$L(s,4k)$, has an Euler product, analytic continuation, and zero free
region.  
Noting that $\lambda_0(n)=r(n)=N$, arguing in this way we can show
that
\begin{equation} \label{eq:meanvalue}
\frac{1}{\#\{n \le x : r(n)=N\}}\sum_{\substack{n \le x \\ r(n)=N}} \lambda_{4k}(n)^2 = \begin{cases}
N^2 &\text{ if } k=0\\
N (g(k;Y)+\mathcal E_k) &  \text{ otherwise},
\end{cases}
\end{equation}
where there error term $\mathcal E_k$ is typically $o(1)$,
$Y=(\log N/\log 2-1)/\log \log x$ and $g(k;Y)$ is an Euler product
such that $g(k;Y)/L(1,8k)^Y$ is bounded.  The distinction between the
terms with $k=0,k \neq 0$ arises since the Dirichlet series
$ \sum_{n \ge 1} \lambda_{4k}(n)^2 n^{-s} $ has a pole of order $2$ at
$s=1$ if $k=0$, whereas if $k \neq 0$ it has a pole of order $1$ and
the residue will be given in terms of $L(1,8k)$. We note that the
total contributions are of similar order of magnitude since the order $2$
contributions occur on a subspace of larger codimension. At this point we
use \eqref{eq:meanvalue} in \eqref{eq:rcorrdef-intro} to get
\[
  \frac{1}{\# \{ n \le x: r(n)=N\}}\sum_{\substack{n \le x \\ r(n)=N}}
  R_2(n;F_N)=\widehat f(0)+\frac{1}{N}\sum_{k \in \mathbb Z \setminus \{0\}} \widehat f\bigg( \frac{k}{N} \bigg)  g(k;Y)+o(1).
\]
For $f$ that approximates the indicator function of an interval $I$ we
have $\widehat f(0)\approx |I|$, which is consistent with a sequence
that displays Poisson spacing statistics. To compute the second term
above we use analytic properties of $L(s,8k)$ to express $g(k;Y)$ in
terms of a {\em short} Dirichlet polynomial of length $\le N^{o(1)}$
with coefficients related to $\lambda_{4k}(n)$, that is, $g(k;Y)$ is
well-behaved as $k$ varies and we can apply Poisson summation again to
show the second term above equals $f(0)+o(1)$. To pass to 
the pair correlation function over distinct angles
we subtract the contribution from the terms with $\theta_{\ell_1}=\theta_{\ell_2}$, which equals $\tfrac{1}{N}\cdot N F_N(0)=f(0)+o(1)$ and cancels the secondary main term above.
To show that it is Poisson along a density one subsequence we will
bound the variance by computing the average of $R_2(n;F_N)^2$
(following a similar strategy), similar to the argument used by Sarnak
to show Poisson pair correlation for Laplace eigenvalues of generic
$2d$ tori \cite{sarnak-poisson-pair}.  An interesting obstruction to
extending this for $r>2$ in Sarnak's setting is ``variance blowup''
--- expectations remain consistent with Poisson statistics, but
variance growth makes it impossible to deduce Poisson behavior for
almost all tori.

Fortunately there is no such
variance blowup in our setting for larger $r$, but the approach
described above becomes significantly more involved and further ideas
are needed.
As before, our first step is to apply the Landau-Selberg-Delange
method to transform the problem, which gives a new expression that
depends on $\vec k \in \mathbb Z^{r-1}$ in a complex way including an arithmetic factor
$g(\vec k ;Y)$ as
well as an analytic factor which comes from the order of the pole of
the Dirichlet series associated to 
$ \lambda_{-4k_1}(n)\lambda_{4(k_1-k_2)}(n)\cdots
\lambda_{4(k_{r-2}-k_{r-1})}(n)\lambda_{4k_{r-1}}(n)$,
at
$s=1$.  The arithmetic
factor $g(\vec k;Y)$ is roughly of the shape of a product of Hecke
$L$-functions at $s=1$, which we can approximate by a short Dirichlet polynomial, so if we were to ignore the analytic factor we would be able 
to evaluate the average over $\vec k$.  While the behavior of the
analytic factor is complex, its value is fixed over certain subregions
of $\mathbb Z^{r-1}$ (e.g. for $r=2$, this corresponds to $k=0$,
$k \neq 0$) and by partitioning $\mathbb Z^{r-1}$ into these
subregions we are able to perform the average over $\vec k$ by means
of another application of the Poisson summation formula over each
nontrivial subregion, each of which is essentially a lattice.  This procedure essentially solves the arithmetic part of the problem. 

However, the procedure described above yields a term for each
subregion in the decomposition of $ \mathbb Z^{r-1}$ that gives a
contribution that is potentially the same size as our main term or
potentially even
larger (later on we are able to rule out any terms ``blowing up'' as $N
\rightarrow \infty$.) We need to control the terms arising from these
subregions of $\mathbb Z^{r-1}$ and would like to understand the
behavior of the analytic factor, which counts the order of the pole at
$s=1$ of the 
Dirichlet series mentioned above. It turns out, this is $\ge
1$ for every $\vec k \in \mathbb Z^{r-1}$, so that there are poles everywhere! 
Moreover, the number of terms arising in the decomposition of $\mathbb Z^{r-1}$ into subregions where the value of the analytic factor is fixed grows exponentially with $r$ making it difficult to  analyze for large $r$. 
This complexity arises in part due to residual lower order correlations and we
are left with an explicit yet intractable combinatorial expression for
the $r$-correlation.

A similar predicament also occurs in the context of
computing the $r$-correlation of the zeros of the Riemann
zeta-function, where after solving the arithmetic part of the problem
 one is left with an unwieldy expression for
the $r$-correlation. In the range corresponding to the ``diagonal terms''
Rudnick and Sarnak \cite{Rudnick-Sarnak0,Rudnick-Sarnak} succeeded in
solving the 
combinatorial problem. The ``off-diagonal'' terms can be analyzed at
the heuristic level using Hardy-Littlewood's conjecture on
correlations of primes (e.g. \cite{montgomery}), however for large
$r$, this leaves a horrible combinatorial formula. By expanding the
random matrix $r$-correlation function as a sum of cycles, Bogomolny
and Keating \cite{bk1,bk2} were able to eventually arrive at the same
expression (including the diagonal terms), thereby heuristically
matching the correlations of zeros 
of $\zeta(s)$ with that of eigenvalues
of GUE random matrices in the full range.  
Innovative recent works
\cite{entin-roditty-gershon-rudnick, conrey-snaith, chandee-lee,
  mason-snaith} on statistics of zeros of $L$-functions have solved
closely related 
combinatorial problems through relating these expressions to identical
ones coming from $L$-functions over function fields
\cite{entin-roditty-gershon-rudnick} or from random matrices
\cite{conrey-snaith, chandee-lee, mason-snaith}, for which the spacing
statistics of the zeros/eigenvalues, respectively, are known (see also \cite{conrey-keating}). We
pursue an approach similar in spirit by
%
%
%
%
introducing a random model for the angles of lattice points ---
interesting on its own right --- and compute the spacing statistics
for this model in two ways. First, we argue directly\fixmepk{add more
  comments on the proof. In particular mention lower order terms that
  do not show up in ``take $N$ random uniform points''-model } and
show that the $r$-level correlations of the random model are
Poissonian. Second, we then compute the $r$-level correlation in the
random case in a different way
and eventually arrive at the same complicated expression as in the
deterministic setting, including the exact same analytic
factor. Hence, we are able to match the deterministic computation with
the random one and then are able to evaluate the $r$-correlation by
combining these two approaches.

\subsection{Acknowledgments}

P.K. was partially supported by the Swedish Research Council (grant
numbers 2016-03701,2020-04036). S.L. was partially supported by an
EPSRC Standard Grant EP/T028343/1.  We would like to thank Jon Keating
and Peter Sarnak for comments on an earlier version of this
manuscript.

\section{The set-up}

Let $\mathcal N_M$ be the set of square free integers with prime divisors all congruent to $1 \tmod 4$ and total number of prime divisors equal to $M$ i.e.
\[
\mathcal N_M= \{ n \in \mathbb N : \mu^2(2n)b(n)=1 \text{ and } \Omega_1(n)=M\},
\]
where $b(\cdot)$ is the indicator function of the set of integers which can be represented as a sum of two squares
and 
\[
\Omega_1(n) =\sum_{\substack{p^a || n \\ p \equiv 1 \pamod4}}a.
\]
Throughout we use the notation $1_P$, which we define as $1_P=1$ if the statement $P$ is true and $1_P=0$ otherwise. We also write $\one_{\mathcal A}$ for the indicator function of the set $\mathcal A$ e.g. $\one_{\mathcal S}=b$.
Note that for each $n \in \mathcal N_M$ we have $r(n)=2^M$. 
Also let
\[
\mathcal N_M(x)=  \{ n \in \mathcal N_M : n \le x\}.
\]
%
%
Let $\OO=\mathbb Z[i]$ denote the Gaussian integers.
For $n$ with $r(n)=2^M=N$ we express the $r$-correlation of the lattice
points on the circle with radius $\sqrt{n}$ by
\[
R_{r}(n;F_N) =\frac{1}{N} \sum_{
\substack{
(\beta_1), \ldots, (\beta_{r})  \subset \OO \\
|\mathcal O/(\beta_1)|=\ldots= |\mathcal O/(\beta_r)|=n     
}}
F_N(\theta_{\beta_1}-\theta_{\beta_2}, \ldots,
\theta_{\beta_{r-1}}-\theta_{\beta_r}), 
\]
with $F_{N}$ as defined in  \eqref{eq:Fform},
and $\theta_{\beta}=\tmop{arg}(\beta)$ --- note that the angle
$\arg(\beta)$ is well defined modulo $\pi/2$ since it is 
independent of choice of generator of $(\beta)$.
We recall that the reason for working with period $\pi/2$,
rather than $2 \pi$, is that the set of lattice points on a given
circle is invariant under rotation by $\pi/2$.

Because we will restrict to a density one set of integers which are sums of
two squares, it suffices to consider sums of two squares $n=ef^2$ with
$e$ square free and $f \le F$ for $F$ 
which grows arbitrarily slowly as $x \rightarrow \infty$ since the
number of such sums of two squares $ \le x$ with $f \ge F$ is
\begin{equation} \label{eq:fixedn0}
\sum_{f \ge F} \sum_{e \le x/f^2} b(ef^2) \mu^2(e) \ll \frac{x}{F \sqrt{\log x}}.
\end{equation}
Hence, it suffices to consider $n=n_0n_1$ where $n_0=2^af^2d$,
$1 \le f \le F$, $a \in \{0,1\}$, $d$ is a divisor of $f$ with
$\mu^2(2d)b(d)=1$ and $n_1 \in \mathcal N_M$ with $(n_0,n_1)=1$ and
$1 \le M \le A
\log \log x$ for any fixed $A>0$.
It is then sufficient to average over \textit{odd} square free
  numbers $n_1$; this partitioning simplifies some of
  the analysis 
  below but  is not essential to our argument.  Write
$\mathcal N_{M,n_0}=\{ n \in \mathcal N_M: (n,n_0)=1 \}$ and
$\mathcal N_{M,n_0}(x)=\{ n \in \mathcal N_{M,n_0}: n \le x\}$. We
also define
\[
R_{r,n_0}(n;F_N):=R_r(n_0n;F_N).
\]

\section{A random model}
\label{sec:random-model}

Motivated by Hecke's result on the equidistribution of angles of
Gaussian primes, we introduce a simple, purely probabilistic, model
for the angular gaps between lattice points $(x,y) \in \Z^2$ on the
circle $x^{2}+y^{2} = n$, for $n$ square free. Namely, for each prime
$p|n$, pick a random uniformly distributed angle
$\vartheta_{p} \in \T := \R/( (\pi/2) \Z)$ (which should not be confused
with the deterministic angle $\theta_p$ associated with $p$.)  For
such $n$, with probability one, we obtain $r(n)=2^{M}$ distinct points
on the unit circle, where $M$ is the number of prime divisors of $n$,
with angles given by
$$
\vartheta = \sum_{p|n} \pm \vartheta_p
$$

It is convenient to parameterize these angles as follows: order the set
of prime divisors $p|n$ according to size, i.e.,
$p_{1} < p_{2} < \ldots < p_{M}$ and (abusing notation) put
$\vartheta_i = 2 \vartheta_{p_{i}}$ for $i = 1, \ldots, M$ (note that
$\vartheta_1, \ldots, \vartheta_{M}$ are then independent and identically
distributed uniform random variables on $\T$), 
and for each subset
$J \subset \{1, \ldots, M\}$ define a random variable 
\begin{equation} \label{eq:xJ-def}
x_J := \sum_{j \in J } \vartheta_{j} \pmod{\pi/2};
\end{equation}
note that the gaps between these points and the original points differ
by a translation by $-\sum_{p|n} \vartheta_p $ and hence their spacings
as well as correlations are identical.

 We define the $r$-correlation of the points 
 $(\theta_{\beta} + 
x_J)_{\substack{ (\beta) \subset \OO :|\OO/\beta|=n_{0} \\ J\subset\{1,\ldots,M\}}}$
 by
\begin{equation}\label{eq:randomcorr}
\mathcal  R_{r,n_0}(F_N) =
  \frac{1}{N} \sum_{\substack{J_1,\ldots,J_r \subset \{1,\ldots,M\} \\ |\mathcal
      O/(\beta_1)|=\ldots= |\mathcal O/(\beta_r)|=n_0} }
  F_N(\theta_{\beta_1}-\theta_{\beta_2}+x_{J_1}-x_{J_2}, \ldots, \theta_{\beta_{r-1}}-\theta_{\beta_r}+ x_{J_{r-1}}-x_{J_r}),
\end{equation}
where $N=2^M r(n_0)$.  We then show that analogues of the mean and the
variance in the deterministic model are well matched by the mean and
variance of the random model.
\begin{thm} \label{thm:corr2}
Let $A>0$ be fixed.
Suppose that $1 \le M \le A\log \log x$.
We have that
\begin{equation}\label{eq:randomeq1}
\begin{split}
  \frac{1}{\# \mathcal N_{M,n_0}(x)}
  \sum_{n \in \mathcal N_{M,n_0}(x)} R_{r,n_0}(n;F_N)
  =& \mathbb E \big(  \mathcal R_{r,n_0}(F_N) \big)+O\left( \frac{1}{(\log \log x)}
+\frac{1}{M^{10}}\right).
\end{split}
\end{equation}
Additionally, 
\begin{equation}\label{eq:randomeq2}
\begin{split}
  \frac{1}{\# \mathcal N_{M,n_0}(x)}
  \sum_{n \in \mathcal N_{M,n_0}(x)} R_{r,n_0}(n;F_N)^2 
  =& \mathbb E \big(  \mathcal R_{r,n_0}(F_N)^2 \big)+O\left( \frac{1}{(\log \log x)}+\frac{1}{M^{10}}\right).
\end{split}
\end{equation}
The implied constants depend at most on $f,r,n_0$ and $A$.
\end{thm}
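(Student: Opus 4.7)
The plan is to prove both identities by Fourier expanding the correlation functions via \eqref{eq:Fform} and reducing the theorem to a single Landau--Selberg--Delange (LSD) estimate. By multiplicativity $\lambda_{4m}(n_0 n_1) = \lambda_{4m}(n_0)\lambda_{4m}(n_1)$, the $n_0$-factors match across both sides of \eqref{eq:randomeq1}. Summing over each $J_i$ in \eqref{eq:randomcorr} produces the random Fourier coefficient $\tilde\lambda_{4m} := \sum_{J} e^{-4im x_J} = \prod_{j=1}^M (1 + e^{-4im\vartheta_j})$, and by independence of the $\vartheta_j$ together with the fact that $\vartheta_j \bmod \pi/2$ is uniform, expanding yields $\mathbb E\bigl[\prod_{i=1}^r \tilde\lambda_{4m_i}\bigr] = c(\vec m)^M$, where $c(\vec m) := \#\{S \subset \{1,\ldots,r\} : \sum_{i \in S} m_i = 0\}$ and $\vec m$ is the linear-in-$\vec k$ tuple implicit in \eqref{eq:rcorrdef-intro} (note $\sum m_i = 0$ identically). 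So \eqref{eq:randomeq1} reduces to showing, uniformly in $\vec m$ of size at most $N = (\log x)^{O(1)}$,
$$\frac{1}{\#\mathcal N_{M,n_0}(x)} \sum_{n_1 \in \mathcal N_{M,n_0}(x)} \prod_{i=1}^r \lambda_{4m_i}(n_1) = c(\vec m)^M + O\bigl(c(\vec m)^M (1/\log\log x + 1/M^{10})\bigr),$$
and then summing against $\widehat f(\vec k/N)/N^r$, with the rapid decay of $\widehat f$ closing the tails.

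The central analytic input is the LSD asymptotic for the Dirichlet series
$$D(s, u) = \prod_{p \equiv 1(4),\, p \nmid 2n_0} \bigl(1 + u p^{-s} \textstyle\prod_i 2\cos(4m_i \theta_{\pi_p})\bigr).$$
Expanding $\prod_i 2\cos(4m_i\theta) = \sum_{\epsilon \in \{\pm 1\}^r} e^{4iK_\epsilon \theta}$ with $K_\epsilon = \sum_i \epsilon_i m_i$, and using $\sum_p e^{-4iK\theta_{\pi_p}} p^{-s} = \tfrac{1}{2}\log L(s, K) + O(1)$ for Hecke $L$-functions $L(s, K)$, one obtains $D(s, u) = G(s, u)(s-1)^{-u c(\vec m)/2}$ near $s = 1$, with $G$ analytic and nonzero; here one uses that $L(s, K) \ne 0$ at $s = 1$ for $K \ne 0$ and that $\#\{\epsilon : K_\epsilon = 0\} = c(\vec m)$, which follows from $\sum m_i = 0$. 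Applying LSD and extracting the coefficient of $u^M$ via Selberg--Sathe (equivalently, Cauchy's formula with a saddle point at $u = 2M/\log\log x$) produces $c(\vec m)^M \cdot \#\mathcal N_{M,n_0}(x)$ as the main term, the error being the standard LSD remainder $O(1/\log\log x)$ plus the coefficient-extraction error $O(1/M^{10})$, which arises from Stirling applied to the Gamma factors and concentration of $\Omega_1$ near $\tfrac{1}{2}\log\log x$.

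The variance identity \eqref{eq:randomeq2} follows the same scheme: apply \eqref{eq:Fform} to each factor of $R_{r,n_0}^2$, producing a sum over $(\vec k, \vec k') \in \mathbb Z^{2r-2}$ with a product of $2r$ Hecke-character factors. The random expectation factors by independence of the $\vartheta_j$ into $c_2(\vec m, \vec m')^M$ for an analogous subset-count, and this is matched by the pole order of the analogous $2r$-fold Dirichlet series on the deterministic side, via the same LSD argument. The main obstacle throughout is securing LSD uniformity in the Hecke-character parameters $K_\epsilon$, whose moduli grow with $|\vec m|$ up to polylog in $x$: one needs zero-free regions and log-derivative bounds for $L(s, K_\epsilon)$ uniform in conductor, together with uniform control of the arithmetic factor $G(1, u)$ --- essentially a Siegel--Walfisz-type input for Hecke characters of $\mathbb Q(i)$. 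Secondary issues are the tails $|\vec k| > N^{1+\varepsilon}$ (handled by Schwartz decay of $\widehat f$, contributing $O(N^{-A})$), the small-prime and $n_0$-coprimality corrections (absorbed into $G(s, u)$), and the combinatorial bookkeeping of the signed sums $K_\epsilon$ as $\vec m$ varies, which is precisely the point where one must verify the pole order really equals $u \cdot c(\vec m)/2$ and not some strictly larger quantity that would spoil the matching with the random model.
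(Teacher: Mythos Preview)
Your reduction to a pointwise estimate is where the argument breaks. You claim that for each fixed $\vec m$ (equivalently $\vec k$),
\[
\frac{1}{\#\mathcal N_{M,n_0}(x)} \sum_{n_1 \in \mathcal N_{M,n_0}(x)} \prod_{i=1}^r \lambda_{4m_i}(n_1) = c(\vec m)^M + O\bigl(c(\vec m)^M (1/\log\log x + 1/M^{10})\bigr),
\]
but this is false. The LSD method (your $D(s,u)=G(s,u)(s-1)^{-uc(\vec m)/2}$ factorization is correct) produces a main term carrying the arithmetic factor $G(1,u)$ evaluated at the saddle $u = Y/\alpha(\vec k)$ with $Y=(M-1)/\log\log x$. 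After dividing by $\#\mathcal N_{M,n_0}(x)$ (itself an LSD count with its own Euler product), the ratio of these constants does \emph{not} cancel: what remains is precisely the factor $g(\vec k;Y)$ of \eqref{eq:glkdef}, which is essentially $\prod_{[\mathbf S]:k_{\mathbf S}\neq 0} L(1,8k_{\mathbf S})^{Y/\alpha(\vec k)}$. These $L$-values at $s=1$ fluctuate with $\vec k$ (they can be as large as a power of $\log\log K$ or as small as a negative power of $\log K$), so $g(\vec k;Y)$ is not $1+o(1)$ pointwise. Your ``uniform control of $G(1,u)$'' would at best give $g(\vec k;Y)=O(1)$, not $g(\vec k;Y)=1+O(\text{error})$, and that is not enough.

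The paper's proof accepts that the pointwise LSD output is $(2\alpha(\vec k))^M\bigl(g(\vec k;Y)+O(\mathcal L(\vec k)/\log\log x)\bigr)$ (Proposition~\ref{prop:LSD}), and then devotes all of Section~5 to showing that after summing against $\widehat f(\vec k/N)\ell_{\vec k}(n_0)$ the factor $g(\vec k;Y)$ can be replaced by $1$. This requires: (i) decomposing $\mathbb Z^{r-1}$ into the pieces $V^\star\in\mathcal V_r$ on which $\alpha(\vec k)$ is constant, with the key bound $\alpha_{V^\star}\le 2^{r-d-1}$ (Lemma~\ref{lem:alpha-bound}); (ii) approximating $g(\vec k;Y)$ by a short Dirichlet polynomial (Proposition~\ref{prop:approx}); and (iii) exploiting cancellation in the $\vec k$-sum over each lattice $V\cap\mathbb Z^{r-1}$ via Poisson summation, which in turn relies on a quantitative repulsion estimate for Gaussian angles (Lemma~\ref{lem:repulsion}). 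None of this machinery appears in your proposal, and it cannot be avoided: the matching with the random model happens only \emph{after} the $\vec k$-average, not term by term.
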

Note that
$r(n) = (\log n)^{(\log 2)/2+o(1)}$ holds for a full density
subset of the set integers for which $r(n)>0$, hence the condition
$M \in [1, A \log \log x]$ is not very restrictive.

\section{Applying the LSD method}
Let $k_1,\ldots, k_{r-1} \in \mathbb Z$. Here and in what follows we use
the convention that $k_0=k_r=0$. To compute the $r$-correlation we apply
\eqref{eq:Fform}. Switching the order of summation we will need to
understand the sum
\begin{equation} \label{eq:target}
\frac{1}{\# \mathcal N_{M,n_0}(x)}\sum_{n \in \mathcal N_{M,n_0}(x)} \prod_{j=0}^{r-1} \lambda_{4(k_{j+1}-k_j)}(n)\quad \text{ where } \quad
\lambda_{4k}(n)=\sum_{|\mathcal O/(\beta)|=n} e^{i4k\theta_{\beta}}
\end{equation}
for each given $k_1,\ldots, k_{r-1}$ 
(note that while $\beta$ is only defined up to multiplication by $i$,
the angle $4k\theta_{\beta}$ is well defined modulo $2 \pi$.)  This
will be done by applying the Landau-Selberg-Delange method, which
requires some analytic input.

Let us introduce the following Hecke $L$-functions.
Recall that $\OO=\Z[i]$ denotes the ring of integers of $\Q(i)$;
recall also that 
$\OO$ is a principal ideal domain.
Further,
for
$k \in \mathbb Z$ and an ideal
$\mathfrak b=(\beta)\subset \OO$, let
$\Xi_{4k}(\mathfrak b)=e^{4ki\theta_{\beta}}$ and define
(note that $|\OO/\mathfrak b|$ is the norm of the ideal $\mathfrak b$)
\begin{equation} \label{eq:lfn}
  L(s,4k)=
  \sum_{0 \neq \mathfrak b \subset \mathcal O}
  \frac{\Xi_{4k}(\mathfrak b)}{|\OO/\mathfrak b|^s}  
  =
  \prod_{\mathfrak p \subset \mathcal O}
  \left( 1-\frac{\Xi_{4k}(\mathfrak p)}{|\OO/ \mathfrak p|^s}\right)^{-1}
  =
  \prod_{p}\left(1-\frac{\lambda_{4k}(p)}{p^s}+\frac{\psi_4(p)}{p^{2s}} \right)^{-1}
\end{equation}
where the sum is over all nonzero ideals of $\OO$, the first product is over the set of prime ideals in $\OO$, and $\psi_4(p)$ is the non-principal Dirichlet character modulo $4$. The last identity can be seen by separately considering split, ramified, and inert primes.
The $L$-function above admits an analytic continuation to all of
$\mathbb C$ provided that $k \neq 0$; for $k=0$,
$L(s,0)=\zeta_{\mathbb Q(i)}(s)=\zeta(s)L(s, \psi_4)$, which has a
simple pole at $s=1$. Additionally, for primes $p \equiv 1 \pmod 4$ we
write $\theta_p=\theta_{(\pi)}=\tmop{arg}(\pi)$ where $\pi$ is the Gaussian prime with 
$|\pi|^2=p$ and $0<\tmop{arg}(\pi) < \pi/4$.

 Given $\mathbf S \subset \{0,1,\ldots, r-1\} $ and $k_{1}, \ldots,
 k_{r-1} \in \Z$ define 
 $$
 k_{\mathbf S}=k_{\mathbf S}(k_1,k_2,\ldots,k_{r-1})=
 \sum_{j \in  \mathbf S} (k_{j+1}-k_j) \in \Z
 $$
 and observe $k_{\mathbf S}+k_{\mathbf  S^c}=\sum_{j=0}^{r-1}
 (k_{j+1}-k_j)=0$ (recall $k_0=k_r=0$ by 
convention.) Consequently for $p \equiv 1 \tmod 4$,  we have that
$\lambda_{4k_{\mathbf S}}(p)=e^{i4k_{\mathbf S}
  \theta_p}+e^{-i4k_{\mathbf S} \theta_p}=e^{i4k_{\mathbf S}
  \theta_p}+e^{i4k_{{\mathbf S}^c} \theta_p}$. Motivated by the previous observation, let $\mathcal
M_r=\mathcal P(\{0,\ldots, r-1\})/ \sim$ where for $\mathbf S,\mathbf
T \in \mathcal P( \{0,\ldots, r-1\})$ we write $\mathbf S \sim \mathbf T$ if
$\mathbf S=\mathbf T^c$ where 
  $\mathbf T^c=\{0,\ldots, r-1\} \setminus \mathbf T$ and for a set $\mathcal A$ we use the notation $\mathcal P(\mathcal A)$ to denote the power set of $\mathcal A$. 
  Using the statements above we find that
\begin{equation}\label{eq:combin}
\begin{split}
\prod_{j=0}^{r-1} \lambda_{4(k_{j+1}-k_j)}(p)&=\prod_{j=0}^{r-1} e^{-4(k_{j+1}-k_j)i \theta_p}(1+e^{8(k_{j+1}-k_j)i\theta_p})\\
&=\prod_{j=0}^{r-1} (1+e^{8(k_{j+1}-k_j) i \theta_p})=\sum_{\mathbf S \subset \{0,1,\ldots, r-1\}} e^{8k_{\mathbf S}i\theta_p}=\sum_{ \substack{[\mathbf S] \in \mathcal M_r}} \lambda_{8  k_{\mathbf S}}(p).
\end{split}
\end{equation}
With $\vec{ k} = (k_{1}, \ldots, k_{r-1}) \in \Z^{r-1}$ define
\begin{equation} \label{eq:alphadef}
  \alpha=\alpha(\vec k)=
  \sum_{ \substack{[\mathbf S] \in \mathcal M_r \\k_{\mathbf S}=0}} 1,
\end{equation}
and note that $\# \mathcal M_r \le 2^{r-1} $.
Further, as
$k_{\emptyset}(\vec k)=0$ there is at least one term in the sum in
(\ref{eq:alphadef}), and thus we have
$1 \le \alpha(\vec k) \le 2^{r-1}$ for all
$ \vec k \in \mathbb Z^{r-1}$.
Using a classical
result of Hecke (see \cite[p. 130 \& Eq'n (5.52)]{IK}), which gives
equidistribution of angles of Gaussian primes,
we have that 
\begin{equation} \label{eq:pnt}
\sum_{\substack{p \le x \\ p \nmid n_0}}  \bigg( \prod_{j=0}^{r-1} \lambda_{4(k_{j+1}-k_j)}(p) \bigg)\log p=\alpha(\vec k) x +O(x/(\log x)^B)
\end{equation}
uniformly for $\max_j |k_j| \le e^{(\log x)^{1/3}}$ and any $B>0$,
where the implied
constant depends at most on $B,r$ and $n_0$.\footnote{Hence, it can be shown that the order of the pole of the Dirichlet series associated to $\prod_{j=0}^{r-1} \lambda_{4(k_{j+1}-k_j)}(n)$ at $s=1$ equals $\alpha(\vec k)$.}

We now state the main result of this section, which provides an estimate for \eqref{eq:target}.

\begin{prop} \label{prop:LSD}
Let
$Y=(M-1)/\log \log x$ and $A >0$ be fixed.
Let $\alpha(\vec k)$ be as in
\eqref{eq:alphadef}. Suppose
 $1 \le M \le A \log \log x$. Then there exist absolute constants $A',c_0>0$  such that for $\max_{[\mathbf S] \in \mathcal M_r}
|k_{\mathbf S}| \le e^{ (\log x)^{c_0}}$ we have that
\[
\frac{1}{\# \mathcal N_{M,n_0}(x) }\sum_{n \in \mathcal N_{M,n_0}(x)}
\prod_{j=0}^{r-1} \lambda_{4(k_{j+1}-k_j)}(n) = (2\alpha(\vec k))^M
\bigg(g(\vec k;Y)+O\bigg( \frac{\mathcal L(\vec k)}{\log \log x}
\bigg)\bigg), 
\]
where 
\begin{equation} \label{eq:glkdef}
  g(\vec k;Y)=
  \prod_{\substack{p \equiv 1 \Mod 4 \\ (p,n_0)=1}} \left(1+\frac{Y
      \sum_{\substack{[\mathbf S] \in \mathcal M_r \\ k_{\mathbf S}
          \neq 0}} \lambda_{8k_{\mathbf S}}(p)}{\alpha(\vec k) p
      (1+\frac{2Y}{p})} \right) \quad \text{and} \quad \mathcal L(\vec
  k)
  =\sum_{\pm} \sum_{\substack{[\mathbf S] \in \mathcal M_r \\ k_{\mathbf S} \neq 0}} |L(1,8k_{\mathbf S})|^{ \pm A'}. 
\end{equation}
\end{prop}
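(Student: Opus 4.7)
The plan is to apply the Landau-Selberg-Delange method, augmented by saddle-point analysis in an auxiliary variable $z$ to handle the restriction $\omega(n) = M$. By \eqref{eq:combin}, for $n \in \mathcal N_{M, n_0}$ one has $\prod_{j=0}^{r-1}\lambda_{4(k_{j+1}-k_j)}(n) = \prod_{p|n} h(p, \vec k)$, where $h(p, \vec k) := 2\alpha(\vec k) + \sum_{[\mathbf S] \in \mathcal M_r,\, k_{\mathbf S} \neq 0} \lambda_{8k_{\mathbf S}}(p)$. Introduce the two-variable generating Dirichlet series
$$\mathcal G(s, z; \vec k) := \prod_{\substack{p \equiv 1 \pmod 4 \\ p \nmid n_0}}\left(1 + \frac{z\, h(p, \vec k)}{p^s}\right),$$
so that $\sum_{n \in \mathcal N_{M, n_0}(x)} \prod_j \lambda_{4(k_{j+1}-k_j)}(n)$ is recovered by applying Perron's formula to $[z^M]\mathcal G(s, z; \vec k)$.

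Using the Hecke prime number theorem \eqref{eq:pnt}, one has $\sum_{p \equiv 1 \pmod 4,\, p \nmid n_0} \lambda_{8k_{\mathbf S}}(p)\, p^{-s} = \log L^{(n_0)}(s, 8k_{\mathbf S}) + O(1)$ for $k_{\mathbf S} \neq 0$, yielding the analytic factorization
$$\mathcal G(s, z; \vec k) = \zeta^{(1, n_0)}(s)^{2z\alpha(\vec k)} \prod_{\substack{[\mathbf S] \in \mathcal M_r \\ k_{\mathbf S} \neq 0}} L^{(n_0)}(s, 8k_{\mathbf S})^{z} \cdot H(s, z; \vec k),$$
where $\zeta^{(1, n_0)}(s) := \prod_{p \equiv 1 \pmod 4,\, p \nmid n_0}(1 - p^{-s})^{-1}$ has a $(s-1)^{-1/2}$ branch singularity at $s = 1$ and $H$ is holomorphic for $\Re(s) > 1 - \delta$ with controlled growth in $z$. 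A (smoothed) Perron formula combined with a Hankel-contour deformation past $s = 1$ then produces, at fixed $z$, the main-term asymptotic
$$\sum_{\substack{n \le x,\, \mu^2(n)=1,\, (n,n_0)=1 \\ p|n \Rightarrow p \equiv 1 \pmod 4}} z^{\omega(n)} \prod_{p|n} h(p, \vec k) \sim \frac{x (\log x)^{z\alpha(\vec k) - 1}}{\Gamma(z\alpha(\vec k))} H(1, z; \vec k) \prod_{[\mathbf S] \neq 0} L(1, 8k_{\mathbf S})^{z}.$$

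Extracting $[z^M]$ via saddle-point analysis, the $\vec k$-saddle sits at $z^*(\vec k) = Y/\alpha(\vec k) + O((\log\log x)^{-1})$, while the analogous saddle for $\#\mathcal N_{M, n_0}(x)$ (governed by $\mathcal G_0(s, z) := \prod_{p \equiv 1 \pmod 4,\, p \nmid n_0}(1 + z/p^s)$, of pole order $z/2$) is at $z^*_0 = 2Y$. Crucially $2z^*\alpha(\vec k) = 2Y = z^*_0$, so the $\zeta^{(1, n_0)}$-branch-point contributions match between numerator and denominator of the ratio $A(x, \vec k)/\#\mathcal N_{M, n_0}(x)$; the remaining saddle prefactors (powers of $\log x$, Gamma factors, Gaussian saddle corrections) collapse to $(2\alpha(\vec k))^M$, and the Euler-product factors reduce to
$$\frac{\mathcal G(1, z^*; \vec k)}{\mathcal G_0(1, z^*_0)} = \prod_{\substack{p \equiv 1 \pmod 4 \\ p \nmid n_0}} \frac{1 + z^* h(p, \vec k)/p}{1 + 2Y/p} = \prod_{\substack{p \equiv 1 \pmod 4 \\ p \nmid n_0}}\left(1 + \frac{Y \sum_{[\mathbf S] \neq 0} \lambda_{8k_{\mathbf S}}(p)}{\alpha(\vec k)\, p\, (1 + 2Y/p)}\right) = g(\vec k; Y),$$
using the identity $z^* h(p, \vec k) = 2Y + (Y/\alpha(\vec k))\sum_{[\mathbf S] \neq 0} \lambda_{8k_{\mathbf S}}(p)$. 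The main obstacle is securing uniformity in $\vec k$ across the full range $|k_{\mathbf S}| \le e^{(\log x)^{c_0}}$: this demands Siegel-Walfisz-quality zero-free regions for the Hecke $L$-functions $L(s, 8k_{\mathbf S})$ with explicit conductor dependence, together with careful bookkeeping of the saddle-point remainder and secondary Hankel-contour contributions. These analytic errors produce the factor $\mathcal L(\vec k)/\log\log x$---the $|L(1, 8k_{\mathbf S})|^{\pm A'}$ terms entering naturally through the exponent $z^*$ when one compares the two Euler-product saddle evaluations and Taylor-expands the exponent around its reference value.
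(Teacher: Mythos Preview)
Your approach is essentially the same as the paper's: both introduce the two-variable generating series, carry out the LSD analysis in $s$ (the paper via the Granville--Koukoulopoulos theorem), and then extract the $z^{M}$-coefficient by evaluating at $z = Y/\alpha(\vec k)$ (the paper via Tenenbaum's Theorem~II.6.1.3, which packages exactly the saddle-point/Cauchy-circle step you sketch). Your identity $z^{*} h(p,\vec k) = 2Y + (Y/\alpha(\vec k))\sum_{k_{\mathbf S}\neq 0}\lambda_{8k_{\mathbf S}}(p)$ is precisely the ratio computation the paper performs between its numerator and denominator asymptotics, and the $\mathcal L(\vec k)$ error emerges in the paper from bounding $h''(z)$ in Tenenbaum's theorem---the rigorous form of what you call ``Taylor-expanding the exponent around its reference value.''
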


\subsection{Sums of multiplicative functions}
To prove Proposition \ref{prop:LSD} we will use general results for sums of multiplicative functions that are derived from the Landau-Selberg-Delange method.
Let $f$ be a multiplicative function such that 
\begin{equation} \label{eq:pnt2}
|f(n)| \le \sum_{d_1\cdots d_l=n}1 \quad  \text{ and } \quad
\sum_{p \le x}f(p) \log p=\gamma x+O(x/(\log x)^B)
\end{equation}
for some  $l \in \mathbb N$,$\gamma \in \mathbb C$, and $B
>0$.
Let 
\[
F(s)=\sum_{n \ge 1}\frac{f(n)}{n^s}.
\]
For each nonnegative integer $j$, define  
\begin{equation} \label{eq:cjdef}
\widetilde c_j(\gamma)=\frac{1}{j!} \cdot \frac{d^j}{ds^j} \bigg|_{s=1} \frac{(s-1)^{\gamma}F(s)}{s}.
\end{equation}
In particular, note that
\begin{equation} \label{eq:cform}
\widetilde c_0(\gamma)=\prod_p \left(1+\frac{f(p)}{p}+\frac{f(p^2)}{p^2}+\cdots \right)\left(1-\frac{1}{p} \right)^{\gamma}.
\end{equation}
We now state a result of Granville-Koukoulopoulos
\cite[Th'm~1]{Granville-Koukoulopoulos} 
on sums of
multiplicative functions.
\begin{thm}[Granville-Koukoulopoulos
\cite{Granville-Koukoulopoulos}] \label{thm:gk}
Suppose that \eqref{eq:pnt2} holds. Then for $J=\lfloor B \rfloor$
\[
\sum_{n \le x} f(n) = x \sum_{j=0}^J \widetilde c_j(\gamma) \frac{(\log x)^{\gamma-j-1}}{\Gamma(\gamma-j)}+O\left(x(\log x)^{l-B-1+o(1)}\right),
\]
where the implied constant depends at most on $B,l$ and the implied constant in \eqref{eq:pnt2}.
\end{thm}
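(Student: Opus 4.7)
My plan is to apply the classical Landau--Selberg--Delange method, which in this generality yields exactly the stated expansion. The first step is to promote the prime-sum hypothesis in \eqref{eq:pnt2} into an analytic statement about $F(s)$. By partial summation, the Dirichlet series $\sum_p f(p)/p^s - \gamma \sum_p 1/p^s$ extends analytically from $\mathrm{Re}(s) > 1$ into a Vinogradov-type zero-free region of $\zeta$ of the form $\sigma > 1 - c/(\log(|t|+2))^{2/3}$, with polynomial growth in $|t|$. Exponentiating and matching Euler products, this gives a factorization $F(s) = \zeta(s)^\gamma G(s)$, where $G(s)$ is holomorphic and of moderate growth in the same region. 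The divisor bound $|f(n)| \le d_l(n)$ gives the a priori estimate $|F(s)| \le \zeta(\sigma)^l$ for $\sigma > 1$ and, via Phragm\'en--Lindel\"of, the growth control on $G$ needed in the shifted region.

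Next I would apply Perron's formula together with a Hankel contour deformation. For $c = 1 + 1/\log x$ and a truncation height $T$ to be optimized,
\[
\sum_{n \le x} f(n) = \frac{1}{2\pi i} \int_{c - iT}^{c + iT} F(s) \frac{x^s}{s}\, ds + O(\text{truncation error}),
\]
and one deforms the vertical segment into a Hankel-type path $\mathcal H$ that loops around the branch cut of $\zeta(s)^\gamma$ emanating leftward from $s = 1$, connected by horizontal segments to the vertical line at the left edge of the zero-free region. The contributions from the horizontal and far-left pieces, together with the Perron truncation error, are absorbed into the stated $O\bigl(x(\log x)^{l - B - 1 + o(1)}\bigr)$ using the growth of $G$ and classical bounds on $\zeta(s)$ in the zero-free region.

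The main term comes from the Hankel loop. Substituting the Taylor expansion
\[
(s-1)^\gamma F(s)/s = \sum_{j \ge 0} \widetilde c_j(\gamma)(s-1)^j,
\]
whose coefficients are by definition the $\widetilde c_j(\gamma)$ of \eqref{eq:cjdef}, into the Hankel integral and applying the classical identity
\[
\frac{1}{2\pi i}\int_{\mathcal H} (s-1)^{j - \gamma} x^s\, ds = \frac{x (\log x)^{\gamma - j - 1}}{\Gamma(\gamma - j)},
\]
one obtains the main term after truncating at $j = J = \lfloor B \rfloor$; the tail of the Taylor expansion (shifting the radius of the Hankel loop by $\asymp 1/\log x$) contributes only to the error.

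The main obstacle is the quantitative bookkeeping: one must simultaneously choose the Perron height $T$, the width of the zero-free region, and the Taylor truncation index $J$ so that every error contribution consolidates into the single bound $O\bigl(x(\log x)^{l - B - 1 + o(1)}\bigr)$. The divisor bound $|f(n)| \le d_l(n)$ governs how large $F(s)$ can be on the shifted contour and is what dictates the exponent $l$ appearing in the error, as opposed to a weaker $\mathrm{Re}(\gamma)$-dependent exponent. Achieving the refined Granville--Koukoulopoulos form in this sharp shape may require their elementary convolution-identity approach (writing $f = g \ast h$ for suitable auxiliary multiplicative functions and iterating cruder mean-value results) in place of pure contour integration, but either route rests on the analytic structure described above.
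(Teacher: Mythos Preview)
The paper does not prove this theorem at all; it is quoted verbatim as \cite[Th'm~1]{Granville-Koukoulopoulos} and used as a black box, so there is no ``paper's own proof'' to compare against.

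That said, your sketch contains a genuine gap. You claim that partial summation promotes the hypothesis
\[
\sum_{p \le x} f(p)\log p = \gamma x + O\bigl(x/(\log x)^{B}\bigr)
\]
into analytic continuation of $F(s)\zeta(s)^{-\gamma}$ into a Vinogradov-type region $\sigma > 1 - c/(\log(|t|+2))^{2/3}$. This is false: an error of only logarithmic quality gives you nothing to the left of $\sigma = 1$. Partial summation on $\sum_{p}(f(p)-\gamma)(\log p)p^{-s}$ yields an integral of the shape $\int E(x)\,x^{-s-1}\,dx$ with $E(x)\ll x/(\log x)^{B}$, which converges on $\sigma \ge 1$ (when $B>1$) but offers no continuation beyond. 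So there is no region into which to shift the Perron contour, and the Hankel-loop argument as you describe it does not get off the ground under the stated hypothesis.

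This is precisely the point of the Granville--Koukoulopoulos paper: their elementary convolution approach (which you allude to at the end) is what makes the result work with only the weak logarithmic-saving hypothesis \eqref{eq:pnt2}, whereas the classical complex-analytic LSD method (as in Tenenbaum) requires a stronger assumption giving genuine continuation past $\sigma=1$. Your final paragraph hints that you sense this, but the body of the proposal commits to the contour route, which is not available here.
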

We also require the following result, which is a special case of
\cite[Th'm II.6.1.3]{Tenenbaum}.  

\begin{thm}[Tenenbaum \cite{Tenenbaum}] \label{thm:Tenenbaum}
Let $z\in \mathbb C$.
Let $a_z(\cdot)$ be an arithmetic function for which there exists $A>0$ such that for each $n \in \mathbb N$ there is a power series expansion in the disc $|z| \le A$ of the form
\[
a_z(n)=\sum_{M=0}^{\infty} c_M(n) z^M.
\]
Suppose there exists a function $h(z)$ that is holomorphic in $|z| \le A$ such that for $x \ge 3$ and $|z| \le A$
\[
\sum_{n \le x} a_z(n)=x(\log x)^{z-1}(zh(z)+O(R(x)))
\]
for some quantity $R(x)$, which does not depend on $z$ and the implied constant depends at most on $A$. 
Then for $x \ge 3$ and $1 \le M \le A \log \log x$ we have
\[
\sum_{n \le x} c_M(n)= \frac{x}{\log x} \frac{(\log \log x)^{M-1}}{(M-1)!}\left(h\left(\frac{M-1}{\log \log x}\right) +O\left( \frac{(M-1)}{(\log \log x)^2} \max_{|z| \le A} |h''(z)| +\frac{\log \log x}{M} R(x) \right)\right) 
\]
where the implied constant depends at most on $A$.
\end{thm}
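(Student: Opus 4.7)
The plan is to recover $c_M(n)$ from $a_z(n)$ by Cauchy's coefficient formula and reduce the asymptotic for $\sum_{n \le x} c_M(n)$ to a saddle-point analysis of a small contour integral. Writing $G(x,z) := \sum_{n \le x} a_z(n)$, uniform convergence of the power series on $|z| \le A$ allows us to exchange sum and contour integral:
$$
\sum_{n \le x} c_M(n) = \frac{1}{2\pi i} \oint_{|z| = r} \frac{G(x,z)}{z^{M+1}}\,dz,
$$
valid for any $r \in (0, A]$. Substituting the hypothesis $G(x,z) = x(\log x)^{z-1}(zh(z) + O(R(x)))$ and setting $L := \log\log x$ gives
$$
\sum_{n \le x} c_M(n) = \frac{x}{\log x}\cdot \frac{1}{2\pi i}\oint_{|z|=r} e^{zL}\frac{h(z)}{z^M}\,dz + O\!\left(\frac{x\,R(x)}{\log x}\cdot \frac{1}{2\pi}\oint_{|z|=r}\frac{|e^{zL}|}{|z|^{M+1}}\,|dz|\right).
$$

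The next step is to specialize to $r = r_0 := (M-1)/L$, which is the saddle point of $\phi(z) := zL - (M-1)\log z$. Three inputs drive the argument. First, the identity $\frac{1}{2\pi i}\oint_{|z|=r_0}e^{zL}/z^{M-j}\,dz = L^{M-j-1}/(M-j-1)!$ (direct residue) gives the key cancellation
$$
\frac{1}{2\pi i}\oint_{|z|=r_0} e^{zL}\,\frac{z-r_0}{z^M}\,dz = \frac{L^{M-2}}{(M-2)!} - r_0 \cdot \frac{L^{M-1}}{(M-1)!} = 0,
$$
i.e.\ the linear Taylor correction of $h$ at $r_0$ drops out. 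Second, Taylor's theorem on the segment $[r_0, z] \subset \{|w| \le A\}$ gives $|h(z) - h(r_0) - h'(r_0)(z - r_0)| \le \tfrac{1}{2}\max_{|w|\le A}|h''(w)|\cdot|z - r_0|^2$ for every $z$ on the contour. Third, a Laplace-method computation at $z = r_0 e^{i\theta}$ (where $|e^{zL}| = e^{(M-1)\cos\theta}$ and $|z - r_0|^2 = 2r_0^2(1-\cos\theta)$) yields the saddle-point bounds
$$
\frac{1}{2\pi}\oint_{|z|=r_0}|e^{zL}|\frac{|z-r_0|^2}{r_0^M}\,|dz| \ll \frac{M-1}{L^2}\cdot \frac{L^{M-1}}{(M-1)!}, \qquad \frac{1}{2\pi}\oint_{|z|=r_0}\frac{|e^{zL}|}{|z|^{M+1}}\,|dz| \ll \frac{L^M}{M!}.
$$
Assembling these pieces, the constant Taylor term yields the main term $h(r_0)\cdot L^{M-1}/(M-1)!$, the linear correction vanishes, the quadratic remainder is absorbed in the $(M-1)/L^2 \cdot \max|h''|$ error, and the $R(x)$-contribution reduces (after dividing by the leading factor $L^{M-1}/(M-1)!$) to $L/M \cdot R(x)$, exactly as stated.

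The main technical obstacle is executing the saddle-point estimates uniformly in the full range $1 \le M \le AL$. For $M$ tending to infinity the Laplace estimates above are routine (split the $\theta$-integral into a central region $|\theta| \ll (M-1)^{-1/2+\varepsilon}$ where one applies the Gaussian approximation, and a tail where $e^{(M-1)\cos\theta}$ is exponentially smaller than $e^{M-1}$); Stirling's formula then converts the outcome into the displayed bounds. For $M$ of bounded size (in particular $M = 1$, where $r_0 = 0$ is degenerate), one instead takes a fixed small radius $r \in (0, A)$ and verifies the estimate by elementary bounds, since the prefactor $L^{M-1}/(M-1)!$ is bounded and the claimed error term trivially absorbs everything. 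That the contour of radius $r_0$ lies inside the disc of analyticity of $h$ is automatic from $r_0 = (M-1)/L \le A$ under the hypothesis $M \le AL$.
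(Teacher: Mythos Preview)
The paper does not prove this result; it is quoted as a special case of \cite[Th'm~II.6.1.3]{Tenenbaum}. Your argument is precisely the standard one (originating with Selberg, and reproduced in Tenenbaum's book): extract the $M$-th Taylor coefficient via Cauchy's formula on the circle of radius $r_0=(M-1)/L$, expand $h$ to second order about $r_0$, observe that the exact identities
\[
\frac{1}{2\pi i}\oint_{|z|=r_0}\frac{e^{zL}}{z^{M}}\,dz=\frac{L^{M-1}}{(M-1)!},\qquad
\frac{1}{2\pi i}\oint_{|z|=r_0}\frac{e^{zL}(z-r_0)}{z^{M}}\,dz=0
\]
produce the main term and kill the linear correction, and bound the quadratic remainder and the $R(x)$-term by the stated Laplace-type integrals. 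This is correct and matches Tenenbaum's treatment.

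One small inaccuracy in your final paragraph: you write that for bounded $M$ ``the prefactor $L^{M-1}/(M-1)!$ is bounded and the claimed error term trivially absorbs everything.'' This is only literally true for $M=1$; for $M=2,3,\ldots$ fixed the prefactor grows like $L^{M-1}$. The point, rather, is that your saddle-point bounds
\[
\int_{-\pi}^{\pi}e^{(M-1)\cos\theta}(1-\cos\theta)\,d\theta \ll \frac{e^{M-1}}{(M-1)^{3/2}},\qquad
\int_{-\pi}^{\pi}e^{(M-1)\cos\theta}\,d\theta \ll \frac{e^{M-1}}{(M-1)^{1/2}}
\]
hold \emph{uniformly} for all $M\ge 2$ (the implied constants are absolute; for small $M-1$ the integrals are just bounded and the right-hand sides are bounded below), so the choice $r_0=(M-1)/L$ already works throughout $M\ge 2$. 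Only $M=1$ genuinely requires separate treatment, and there your residue computation gives $h(0)$ exactly with no $h''$-error, consistent with the vanishing coefficient $(M-1)/L^{2}=0$ in the statement.
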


\subsection{Analytic estimates}
Using the generalized binomial theorem, we see that for $z \in \mathbb
C$ and $\tmop{Re}(s)>1$ that 
\[
  L(s,4k)^z=
  \prod_{\mathfrak p \subset \mathcal \OO} \left(
  \sum_{j=0}^{\infty} \binom{z+j-1}{j}
  \frac{\Xi_{4k}(\mathfrak p^j)}{|\OO/\mathfrak p^j|^s}
  \right)
  =\sum_{n \ge 1}
  \frac{\lambda_{4k}(n;z)}{n^s}
\]
where $\binom{\cdot}{\cdot}$ denotes the generalized binomial
coefficients (in particular, we have $(1-t)^{z} = \sum_{k=0}^{\infty}
\binom{z+k-1}{k} t^{k}$ for $|t|<1$)
and
$\lambda_{4k}(\cdot;z)$ is the multiplicative function 
given by 
\begin{equation} \label{eq:lzdef}
\lambda_{4k}(n;z)= \sum_{ \mathfrak a \subset \OO :  |\mathcal O/\mathfrak a|=n} b_z(\mathfrak a)
\Xi_{4k}(\mathfrak a), 
\end{equation}
where the sum is over ideals in $\OO$ having norm $n$, and
for $\mathfrak a=\mathfrak p_1^{a_1}\cdots \mathfrak p_w^{a_w}$,
we let $b_z(\mathfrak a)=\binom{z+a_1-1}{a_{1}} \cdots
\binom{z+a_w-1}{a_{w}}$. We 
have for $|z| \le A$ that 
\begin{equation} \label{eq:lzbds}
b_z(\mathfrak a) \ll |\mathcal O/\mathfrak a|^{o(1)}, \qquad \lambda_{4k}(n;z) \ll n^{o(1)},
\end{equation}
uniformly for $k \in \mathbb Z$.
For $Y \ge 0$, $\vec k \in \mathbb Z^{r-1}$ define the multiplicative functions $w(\cdot;Y),s(\cdot;\vec k)$
supported on square free numbers given by
\begin{equation} \label{eq:wsdef}
w(p;Y)=\frac{Y b(p)}{ 1+\frac{2Y}{p}}, \qquad s(p;\vec
k)=\sum_{\substack{[\mathbf S] \in \mathcal M_r \\ k_{\mathbf S} \neq
    0}} \lambda_{8k_{\mathbf S}}(p). 
\end{equation}

The following key proposition will allows us to
express $g(\vec k;Y)$ (cf. \eqref{eq:glkdef} for its definition) as a 
short Dirichlet polynomial (later we will choose
$y=\exp( (\log \lVert \vec k \rVert_{\infty})^{3/4})$.) 
\begin{prop} \label{prop:approx}
Let $K=\max_{[\mathbf S] \in \mathcal M_r} |k_{\mathbf S}|$ and $A >0$. Then
for $y \ge \exp( (\log K)^{2/3+o(1)})$ and $ 0 \le Y \le A$, we have that
\begin{equation} \label{eq:gform}
g(\vec k;Y)=\sum_{\substack{m \le  y \\(m,2n_0)=1}} \frac{\alpha(\vec k)^{-\Omega_1(m)}w(m;Y) s(m;\vec k)}{m}+O\left( \exp\left(\frac{- \log y}{(\log(K+y))^{2/3+o(1)}} \right)\right),
\end{equation}
where the implied constant depends at most on $n_0,A$.
Additionally, for $k \neq 0$, $y \ge \exp( (\log k)^{2/3+o(1)})$, and $ |\tmop{Re}(z)| \le  \frac{\log y}{(\log(k+y))^{2/3+o(1)}}$ we have
\begin{equation} \label{eq:lzform}
L(1,4k)^z=\sum_{m \le y} \frac{\lambda_{4k}(m;z)}{m}+O\left( \exp\left(\frac{- \log y}{(\log(k+y))^{2/3+o(1)}} \right)\right).
\end{equation}
\end{prop}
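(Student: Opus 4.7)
The plan is to prove both parts by Perron's formula followed by a contour shift, using the analytic properties of the Hecke $L$-functions $L(s,4k)$ recalled in \eqref{eq:lfn}: for $k\neq 0$ the function $L(s,4k)$ is entire, admits a Vinogradov--Korobov type zero-free region of shape $\sigma \geq 1 - c/(\log(|k|(|t|+2)))^{2/3+o(1)}$, and satisfies polynomial convexity bounds in vertical strips. Crucially, inside the zero-free region $\log L(s,4k)$ is bounded by $O(\log\log(|k|(|t|+2)))$, so that $|L(s,4k)^z| = \exp(\tmop{Re}(z\log L(s,4k)))$ is controlled for complex $z$ of bounded modulus.

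For \eqref{eq:lzform}, since $L(s,4k)^z = \sum_{m\ge 1} \lambda_{4k}(m;z)\, m^{-s}$ for $\tmop{Re}(s)>1$, I would apply the truncated Perron formula with abscissa $c=1/\log y$ to write
\[
\sum_{m\le y} \frac{\lambda_{4k}(m;z)}{m} = \frac{1}{2\pi i}\int_{c-iT}^{c+iT} L(1+s,4k)^z\,\frac{y^s}{s}\,ds + (\text{truncation error}),
\]
then shift the contour to $\tmop{Re}(s) = -\delta$ with $\delta \asymp 1/(\log(k+y))^{2/3+o(1)}$, picking up the residue $L(1,4k)^z$ at $s=0$. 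On the shifted line, the bound on $\log L$ from the zero-free region gives $|L(1+s,4k)^z| \ll (|k|(|t|+2))^{o(1)}$, so the remaining integral is $\ll y^{-\delta}(|k|T)^{o(1)}$; a comparable estimate applies to the horizontal segments via the convexity bound. Choosing $T$ and $\delta$ to balance these contributions yields the claimed error term.

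For \eqref{eq:gform}, write $g(\vec k;Y) = G(1;\vec k,Y)$ where
\[
G(s;\vec k, Y) = \sum_{\substack{m\ge 1,\ (m,2n_0)=1\\ m\text{ squarefree}}} \frac{\alpha(\vec k)^{-\Omega_1(m)} w(m;Y)\, s(m;\vec k)}{m^s}.
\]
Comparing Euler factors prime by prime (each factor of $G$ is $1 + \frac{Y s(p;\vec k)/\alpha(\vec k)}{p(1+2Y/p)}$ with $s(p;\vec k) = \sum_{[\mathbf S]:\,k_{\mathbf S}\neq 0} \lambda_{8k_{\mathbf S}}(p)$), I would factor
\[
G(s;\vec k,Y) = \prod_{\substack{[\mathbf S]\in \mathcal M_r \\ k_{\mathbf S}\neq 0}} L(s, 8k_{\mathbf S})^{Y/\alpha(\vec k)} \cdot H(s;\vec k, Y),
\]
where $H$ is an Euler product absolutely convergent and uniformly bounded in $\tmop{Re}(s) > 1/2 + \varepsilon$ (the leading $1/p$ terms agree by construction, so the discrepancy enters only at $1/p^2$). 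Applying Perron's formula to $G(s+1;\vec k,Y)\,y^s/s$ and shifting past $s=0$ exactly as above, the residue is $g(\vec k; Y)$, and the shifted integral is bounded by the product of the estimates from \eqref{eq:lzform} applied to each factor with $z = Y/\alpha(\vec k)$, which satisfies $|z|\le A$.

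The principal obstacle is the uniform control of $L(s,4k)^z$ for complex $z$ of bounded modulus, since $L$ can become small (though nonzero) near the boundary of the zero-free region, precluding the naive bound $|L^z| \le |L|^{|z|}$. The resolution, as sketched, is that $\log L$ itself is uniformly bounded in the zero-free region, so that $|L^z|$ is $(|k|T)^{o(1)}$; balancing this against the saving $y^{-\delta}$ from the contour shift is precisely what dictates the exponent $2/3+o(1)$ in the error term.
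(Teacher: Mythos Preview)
Your proposal is correct and follows essentially the same route as the paper: define the Dirichlet series $G(s;\vec k,Y)$ (the paper calls it $Z(s)$), factor out the product $\prod_{k_{\mathbf S}\neq 0} L(s,8k_{\mathbf S})^{Y/\alpha(\vec k)}$ leaving an Euler product $H$ (the paper's $\mathcal G$) that is bounded in $\tmop{Re}(s)\ge \tfrac12+\varepsilon$, then apply Perron's formula and shift the contour into the Vinogradov--Korobov zero-free region to pick up the residue $g(\vec k;Y)$ at $s=0$. The paper makes this explicit with the choice $T=y^3+K$ and invokes Coleman's bounds $|L(s,8k)|^{\pm 1}\ll (\log(|k|+|s|))^{2/3+o(1)}$ directly, which for the real exponent $Y/\alpha(\vec k)\in[0,A]$ already controls $|L|^{Y/\alpha}$ without needing your additional observation about $\log L$; but the overall argument is the same.
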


Before proceeding to the proof let us note that for each $1 \le j \le r-1$ choosing
 $\mathbf S=\{0,1,\ldots, j-1\}$ yields $k_{\mathbf S}=k_j$, so that $\lVert \vec k \rVert_{\infty} \le K$.


\begin{proof}
Since \eqref{eq:gform} and \eqref{eq:lzform} follow from similar
arguments we will only prove \eqref{eq:gform}. 
For $\tmop{Re}(s)>1$ define $Z(s) = Z(s, \vec k)$ by
\[
  Z(s) =
  \prod_{(p,2n_0)=1}\left(1+\frac{Y b(p)\sum_{\substack{[\mathbf S] \in \mathcal M_r \\
          k_{\mathbf S} \neq 0}} \lambda_{8k_{\mathbf S}}(p)}{\alpha(\vec k)
      (1+\frac{2Y}{p})p^{s}} \right)
  =
  \sum_{\substack{m \ge 1 \\(m,2n_0)=1}} \frac{\alpha(\vec k)^{-\Omega_1(m)}w(m;Y)s(m;\vec k)}{m^{s}}.
\]
Further, for $\tmop{Re}(s)>1$,
define $\mathcal G(s) = \mathcal G(s, \vec k)$ by
\[
\mathcal G(s)=Z(s)\prod_{\substack{[\mathbf S] \in \mathcal M_r \\ k_{\mathbf S} \neq 0}}L(s,8k_{\mathbf S})^{-Y/\alpha(\vec k)}.
\]
By using the Euler product representations of $Z(s)$ and $L(s,8k)$
it follows that $\mathcal G(s)$ is analytic in $\tmop{Re}(s) \ge 1/2+\varepsilon$
and in this region $|\mathcal G(s)| \ll 1$ where the
implied constant 
depends at most on $A,n_0,r,\varepsilon$ (recall that $1 \le \alpha(\vec k) \le 2^{r-1}$.) Let us state the Vinogradov-Korobov bounds  
\begin{equation} \label{eq:cole}
|L(s, 8k)|^{\pm 1} \ll (\log (|k|+|s|))^{2/3+o(1)}
\end{equation}
due to Coleman \cite[Thm. 1, 2]{Coleman},
which are valid in the region $s=\sigma+it$ with
\[
\sigma \ge 1- \frac{1}{(\log (|k|+|t|))^{2/3+o(1)}}
\]
(see also \cite[Lem. 8]{Coleman} and \cite[Thm. 3.10-3.11]{Titchmarsh}, cf. \cite[Lem. 10,11]{bingrong}.)

Hence, the result follows using a standard contour integration
argument by applying Perron's formula as follows. Let $T=y^3+K$. We
get that 
\[
  \sum_{\substack{m \le y \\ (m,2n_0)=1}} \frac{\alpha(\vec k)^{-\Omega_1(m)}w(m;Y) s(m;\vec k)}{m}
  =
\frac{1}{2\pi i} \int_{1-iT}^{1+iT} Z(s+1) \frac{y^s}{s} \, ds+O(y^{-1/2}).  
\]
Shifting contours to $\sigma=-1/(\log T)^{2/3+o(1)}$ we pick up a
simple pole at $s=0$ with residue $Z(1)=g(\vec k;Y)$. The horizontal
contour integrals are easily seen to be $\ll y^{-2} (\log
T)^{O(1+Y)}$, using \eqref{eq:cole}. The vertical contour on the line
$\sigma=-1/(\log T)^{2/3+o(1)}$ is  
\[
\ll (\log T)^{O(1+Y)} \exp \left( \frac{- \log y}{3(\log (K+y))^{2/3+o(1)}}\right)
\]
also using \eqref{eq:cole}. Collecting estimates completes the proof.
\end{proof}

\subsection{Applications of the LSD method}
To prove Proposition \ref{prop:LSD} we require estimates for $\# \mathcal N_{M,n_0}(x)$ and $\sum_{n \in \mathcal N_{M,n_0}(x)} \prod_{j=0}^{r-1} \lambda_{4(k_{j+1}-k_j)}(n)$. We will only estimate the latter quantity since the argument for estimating $\#\mathcal N_{M,n_0}(x)$ is similar, yet simpler.

For $w \in \mathbb C$ consider the multiplicative function supported
on integers co-prime to $n_0$ given by
\begin{equation} \label{eq:fdef}
f(n;w)= w^{\Omega(n)}b(n) \mu^2(2n) \prod_{j=0}^{r-1} \lambda_{4(k_{j+1}-k_j)}(n),
\end{equation}
where if $w=0$ we set $f(1;0)=1$,
and let
\begin{equation} \label{eq:Fdef}
F(s;w)= \sum_{n \ge 1} \frac{f(n;w)}{n^s}= \prod_{(p,2n_0)=1} \left(1+\frac{w b(p) \sum_{[\mathbf S] \in \mathcal M_r} \lambda_{8k_{\mathbf S}}(p)}{p^s} \right),
\end{equation}
where in the last line we have also used \eqref{eq:combin}.
Using the expression above we see that we can write
\begin{equation} \label{eq:ep2}
F(s;w)=H_{n_0}(s;w)\prod_{[\mathbf S] \in \mathcal M_r} L(s,8k_{\mathbf S})^w = H_{n_0}(s;w)\zeta(s)^{\alpha(\vec k) w} L(s,\psi_4)^{\alpha(\vec k) w} \prod_{\substack{[\mathbf S] \in \mathcal M_r \\ k_{\mathbf S} \neq 0}} L(s,8k_{\mathbf S})^w,
\end{equation}
where $H_{n_0}(s;w)$ is an analytic function with $|H_{n_0}(s;w)| \ll 1$ for $\tmop{Re}(s)\ge \tfrac12+\varepsilon$
  and $|w|\le A$, where the implied constant depends at most on $A,\varepsilon,n_0,r$ (recall $1 \le \alpha(\vec k) \le 2^{r-1}$.) 
Additionally, by \eqref{eq:pnt} for any $B>0$ and $|w|\le A$
\begin{equation} \label{eq:fpnt}
\sum_{p \le x} f(p;w) \log p=\alpha(\vec k) w x+O\left( \frac{x}{(\log x)^B}\right),
\end{equation}
uniformly for $K:=\max_{[\mathbf S] \in \mathcal M_r} |k_{\mathbf S}| \le e^{(\log x)^{1/3}}$ and the implied constant depends on $A,B,n_0$ and $r$.
For $F(s)=F(s;w)$ let $\widetilde c_j(\gamma)$ be as in
\eqref{eq:cjdef} (so that $\gamma=\alpha(\vec k) w$.) For
$|s-1| \le 1$ we can write $\zeta(s)=G(s)/(s-1)$ where $G(s)$ is a
nonzero analytic function in $|s-1|\le 1$ with $G(1)=1$. Hence, we have that 
\[
\frac{d^j}{ds^j} \bigg|_{s=1} \frac{(s-1)^{\alpha(\vec k) w} \zeta(s)^{\alpha(\vec k)
    w}}{s}
\ll 1
\]
for $|w|\le A$,
where the implied constant depends at most on $r,A, j$.
By this, \eqref{eq:ep2}, and \eqref{eq:cole}, say, it is easy
to see using Cauchy's theorem that
\begin{equation} \label{eq:ctildebd}
      |\widetilde c_j(\alpha(\vec k) w)| \ll (\log K)^{O(1)}
\end{equation}
for $|w| \le A$ where the implied constants depend at most on
$j,n_0,A,r$. 
 Applying Theorem \ref{thm:gk} and using \eqref{eq:ctildebd} gives that
\begin{equation} \label{eq:2sum}
\begin{split}
\sum_{n \le x} w^{\Omega(n)}b(n) \mu^2(2n) \prod_{j=0}^{r-1} \lambda_{4(k_{j+1}-k_j)}(n)= x (\log x)^{\alpha(\vec k) w-1}\left(\frac{\widetilde c_0(\alpha(\vec k) w)}{\Gamma(\alpha(\vec k) w)}+O\left( \frac{(\log K)^{O(1)}}{\log x}\right) \right)
\end{split}
\end{equation}
since $\alpha(\vec k) \neq 0$ where the implied constants depend at most on
$n_0,A,r$.

The next step of the proof is to
apply Theorem \ref{thm:Tenenbaum} and to match notation we write $z= \alpha(\vec k) w$ (recall that $1 \le \alpha(\vec k) \le 2^{r-1}$.)
Let $h(z)=\frac{\widetilde c_0(z)}{z 
  \Gamma(z)}$. For $|z|\le A$, \eqref{eq:2sum} implies
  \begin{equation} \label{eq:favg}
  \sum_{n \le x} f(n;z/\alpha(\vec k)) =x (\log x)^{z-1}\left(zh(z)+O\left( \frac{(\log K)^{O(1)}}{\log x}\right) \right).
  \end{equation}
  Since $K \le e^{(\log x)^{c_0}}$ and $c_0$ is sufficiently small the
  error term above is negligible.  By \eqref{eq:cform} (with $\gamma =
  \alpha(\vec{k})w=z$), \eqref{eq:ep2}
  and \eqref{eq:fpnt} we have that
\begin{equation} \label{eq:hform}
h(z)= \frac{1}{z\Gamma(z)} \prod_{p} \left( 1+\frac{f(p;\tfrac{z}{\alpha(\vec k)})}{p} \right)\left(1-\frac{1}{p}\right)^{z}=\frac{H_{n_0}(1;z/\alpha(\vec k)) L(1,\psi_4)^{z}}{z \Gamma(z)} \prod_{\substack{[\mathbf S] \in \mathcal M_r \\ k_{\mathbf S} \neq 0}} L(1,8k_{\mathbf S})^{z/\alpha(\vec k)}
\end{equation}
so that for $|z| \le A$ we have
\begin{equation} \label{eq:hderivbd}
|h''(z)| \ll \prod_{ \substack{[\mathbf S] \in \mathcal M_r \\ k_{\mathbf S} \neq 0}} \max_{\pm} |L(1,8k_{\mathbf S})|^{\pm (A+2)} \ll \sum_{\pm} \sum_{\substack{[\mathbf S] \in \mathcal M_r \\ k_{\mathbf S} \neq 0}} |L(1,8k_{\mathbf S})|^{ \pm A'}=: \mathcal L(\vec k)
\end{equation}
where the last step follows from the inequality of arithmetic and geometric means and $A'$ and the implied constant depend at most on $n_0,r,A$ (here we also used that $|\log t| \le \max(t,1/t)$ for $t>0$.) Taking  
$
a_z(n)=f(n;\tfrac{z}{\alpha(\vec k)})
$
we have for $|z| \le A$ that
\[
a_z(n)=\sum_{M=0}^{\infty} c_M(n) z^M, \qquad \text{where} \qquad 
c_M(n)=\begin{cases}
f(n;\alpha(\vec k)^{-1}) & \text{ if } M=\Omega(n), \\
0 & \text{ otherwise.}
\end{cases}
\]
Write $X=\log \log x$.
Hence, using \eqref{eq:favg}, \eqref{eq:hform}, and \eqref{eq:hderivbd} we may apply Theorem \ref{thm:Tenenbaum} to get that for $Y= \frac{M-1}{X}$
\begin{equation}\label{eq:lsd2}
\begin{split}
& \sum_{n \in \mathcal N_{M,n_0}(x)} \prod_{j=0}^{r-1} \lambda_{4(k_{j+1}-k_j)}(n)=\alpha(\vec k)^{M} \sum_{n \le x} c_M(n)\\
&=\alpha(\vec k)^{M} \frac{x}{\log x} \frac{X^{M-1}}{(M-1)!}\left( \frac{1}{Y\Gamma(Y)}\prod_{p} \left(1+\frac{f(p;\tfrac{Y}{\alpha(\vec k)})}{ p} \right) \left(1-\frac{1}{p} \right)^{Y}+O\left( \frac{M \mathcal L(\vec k)}{X^2}\right)\right).
\end{split}
\end{equation}
By a similar, yet simpler argument that we will omit we also get for $1 \le M \le A \log \log x$
that
\begin{equation}\label{eq:lsd1}
\# \mathcal N_{M,n_0}(x)=2^{-M} \frac{x}{\log x}
\frac{X^{M-1}}{(M-1)!}\left( \frac{1}{Y\Gamma(Y)}\prod_{p }
  \left(1+\frac{2Y 1_{(p,2n_0)=1} b(p) }{p} \right) \left(1-\frac{1}{p}
  \right)^{Y}+O\left( \frac{M}{X^2}\right)\right). 
\end{equation}
\subsection{Proof of Proposition \ref{prop:LSD}}

\begin{proof}[Proof of Proposition \ref{prop:LSD}]
The Euler product on the right-hand side of \eqref{eq:lsd1} is $\asymp 1$. Also, the ratio of the Euler product on the right-hand side of \eqref{eq:lsd2} to that on the right-hand side of \eqref{eq:lsd1} is
\[
\prod_{(p,2n_0)=1} \left(1+\frac{Y b(p) \sum_{\substack{[\mathbf S] \in \mathcal M_r} } \lambda_{8k_{\mathbf S}}(p)}{\alpha(\vec k) p} \right)\bigg/\left(1+\frac{2Y b(p) }{p} \right)=g(\vec k;Y),
\]
which follows from grouping the terms in the summation over $[\mathbf S] \in \mathcal M_r$ with $k_{\mathbf S}=0$, for which $\lambda_{8k_{\mathbf S}}(p)=2$, and those with $k_{\mathbf S} \neq 0$.
Hence, combining \eqref{eq:lsd2} and \eqref{eq:lsd1} we have that
\[
\frac{1}{\# \mathcal N_{M,n_0}(x)}  \sum_{n \in \mathcal N_{M,n_0}(x)} \prod_{j=0}^{r-1} \lambda_{4(k_{j+1}-k_j)}(n)= (2\alpha(\vec k))^M \left(g(\vec k;Y) +O\left(\frac{M}{(\log \log x)^2} \mathcal L(\vec k) \right) \right).
\]
\end{proof}

\section{Averaging over $(k_1,\ldots, k_{r-1})$}
The main result 
of this section is the following proposition, which reduces the
computation of the $r$-correlation to a combinatorial expression. For
$\vec k = (k_{1}, \ldots, k_{r-1})\in \mathbb Z^{r-1}$, and using the
convention $k_{0}=k_{r}=0$, let us define 
\begin{equation} \label{eq:elldef}
    \ell_{\vec k}(n_0)=\prod_{j=0}^{r-1} \lambda_{4(k_{j+1}-k_j)}(n_0).
\end{equation}
\begin{prop} \label{prop:corr} Let $A>0$ be fixed. For $1 \le M \le A\log \log x$ we have that
\begin{equation} \label{eq:correxp}
\begin{split}
    \frac{1}{\# \mathcal N_{M,n_0}(x)} \sum_{n \in \mathcal N_{M,n_0}(x)} R_{r,n_0}(n;F_N)
    = \frac{1}{N^{r}} \sum_{\vec k \in \mathbb Z^{r-1}} \widehat f\bigg(\frac{\vec k}{N}\bigg) \ell_{\vec k}(n_0) (2\alpha(\vec k))^M +O\left( \frac{1}{\log \log x}+\frac{1}{M^{10}}\right)
    \end{split}
    \end{equation}
    where $\alpha(\vec k)$ is as given in \eqref{eq:alphadef} and the implied constant depends on
    at most $f,n_0,r$, and $A$.
\end{prop}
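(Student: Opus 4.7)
The plan is to substitute the Fourier expansion \eqref{eq:Fform} of $F_N$ into the definition of $R_{r,n_0}(n;F_N)=R_r(n_0n;F_N)$, swap the order of summation, use multiplicativity to peel off the $n_0$-dependence, and apply Proposition \ref{prop:LSD} to the resulting averages over $n$.

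Concretely, inserting \eqref{eq:Fform} yields
\[
R_{r,n_0}(n;F_N)=\frac{1}{N^r}\sum_{\vec k\in\Z^{r-1}}\widehat{f}(\vec k/N)\sum_{\substack{(\beta_1),\ldots,(\beta_r)\subset\OO\\|\OO/(\beta_j)|=n_0n}}\exp\!\left(4i\sum_{j=1}^{r-1}k_j(\theta_{\beta_j}-\theta_{\beta_{j+1}})\right).
\]
With the convention $k_0=k_r=0$, the exponent rearranges as $4i\sum_{j=1}^{r}(k_j-k_{j-1})\theta_{\beta_j}$, so the inner sum factors completely across $j$ as $\prod_{j=0}^{r-1}\lambda_{4(k_{j+1}-k_j)}(n_0n)$. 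Since $(n,n_0)=1$ for $n\in\mathcal N_{M,n_0}$ and $\lambda_{4k}$ is multiplicative on coprime arguments (a consequence of unique factorization in $\OO$), this equals $\ell_{\vec k}(n_0)\prod_{j=0}^{r-1}\lambda_{4(k_{j+1}-k_j)}(n)$.

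Now I would average over $n\in\mathcal N_{M,n_0}(x)$ and split the $\vec k$-sum. For $\vec k$ satisfying $K(\vec k):=\max_{[\mathbf S]\in\mathcal M_r}|k_{\mathbf S}|\le e^{(\log x)^{c_0}}$, Proposition \ref{prop:LSD} applies and yields main term $(2\alpha(\vec k))^M$ (with a multiplicative correction $g(\vec k;Y)$; see obstacle below) and error of order $(2\alpha(\vec k))^M\mathcal L(\vec k)/\log\log x$. For $\vec k$ outside this range, rapid decay of $\widehat f$ combined with the trivial bound $|\lambda_{4k}(n_0n)|\le r(n_0n)=N$ (and the estimate $N=(\log x)^{O(1)}$ coming from $M\le A\log\log x$) renders the tail contribution super-polynomially small in $\log x$, hence negligible compared to $1/M^{10}$.

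The $\mathcal L$-error contribution is
\[
\frac{1}{N^r\log\log x}\sum_{\vec k}|\widehat f(\vec k/N)|\,\ell_{\vec k}(n_0)(2\alpha(\vec k))^M\mathcal L(\vec k),
\]
which by Coleman's polylog bound on $\mathcal L(\vec k)$ from \eqref{eq:cole}, the trivial inequality $\ell_{\vec k}(n_0)(2\alpha(\vec k))^M\le r(n_0)^r 2^{rM}=N^r$, and the Schwartz decay of $\widehat f$, is $O(1/\log\log x)$. The principal obstacle I anticipate is reconciling the factor $(2\alpha(\vec k))^M g(\vec k;Y)$ produced by Proposition \ref{prop:LSD} with the bare factor $(2\alpha(\vec k))^M$ appearing in \eqref{eq:correxp}: the difference $(2\alpha(\vec k))^M(g(\vec k;Y)-1)$, weighted by $\widehat f(\vec k/N)\ell_{\vec k}(n_0)/N^r$ and summed over $\vec k$, must be absorbed into the allowed error $O(1/\log\log x+1/M^{10})$. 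This almost certainly requires the short Dirichlet polynomial representation of $g(\vec k;Y)$ from Proposition \ref{prop:approx} followed by another application of Poisson summation in $\vec k$, exactly mirroring the heuristic sketched in the introduction for the pair correlation.
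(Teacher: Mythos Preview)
Your treatment of the $\mathcal L$-error is where the argument breaks. After invoking the trivial bound $\ell_{\vec k}(n_0)(2\alpha(\vec k))^M\le N^r$ you are left with
\[
\frac{1}{\log\log x}\sum_{\vec k\in\Z^{r-1}}|\widehat f(\vec k/N)|\,\mathcal L(\vec k),
\]
and since $\sum_{\vec k}|\widehat f(\vec k/N)|\asymp N^{r-1}$ while Coleman's bound only gives $\mathcal L(\vec k)\ll(\log N)^{O(1)}$, this expression is of order $N^{r-1}(\log N)^{O(1)}/\log\log x$, not $O(1/\log\log x)$. The trivial bound on $(2\alpha(\vec k))^M$ is only sharp at $\vec k=0$; for generic $\vec k$ one has $\alpha(\vec k)=1$ and it overshoots by a factor $N^{r-1}$.

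The paper's route fixes exactly this: it partitions $\Z^{r-1}$ into the pieces $V^\star\in\mathcal V_r$ on which $\alpha(\vec k)=\alpha_{V^\star}$ is constant, and proves the key geometric bound $\alpha_{V^\star}\le 2^{r-d-1}$ with $d=\dim V$ (Lemma~\ref{lem:alpha-bound}), so that $(2\alpha(\vec k))^M/N^r\ll_{n_0,r} N^{-d}$ on $V^\star$. It then shows $\sum_{\vec k\in V^\star}\mathcal L(\vec k)|\widehat f(\vec k/N)|\ll N^d$ (Lemma~\ref{lem:errorbd}), which is itself nontrivial and uses the short Dirichlet polynomial expansion of $L(1,8k_{\mathbf S})^u$ together with the cancellation Lemma~\ref{lem:cancellation}. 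The same decomposition and $\alpha$-bound are what make the $g(\vec k;Y)-1$ contribution manageable (via Lemma~\ref{lem:errorbd2}); you correctly anticipate needing Proposition~\ref{prop:approx} and Poisson summation there, but without the $V^\star$-decomposition and Lemma~\ref{lem:alpha-bound} neither error term can be controlled.
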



To prove the proposition we first apply the main result of the previous section.
In the left-hand side of \eqref{eq:correxp} we apply \eqref{eq:Fform} (see also \eqref{eq:rcorrdef-intro}) and observe that by 
the rapid decay of $\widehat f$ and the trivial bound $|\lambda_{4k}(n)|\le 2^M$ for $n \in \mathcal N_{M,n_0}$ the terms with $\rVert \vec k\lVert_{\infty} \ge N^{1+o(1)}$ contribute $\ll N^{-B}$ for any $B >0$, so that we can add or remove these terms at the cost of a negligible error term. Using the previous observation and applying Proposition \ref{prop:LSD} the left-hand side
of \eqref{eq:correxp}  equals 
\begin{equation}\label{eq:kavg}
\begin{split}
=& \frac{1}{N^r}\sum_{ \vec k \in \mathbb Z^{r-1}} \widehat f \bigg(\frac{\vec k}{N}\bigg) \ell_{\vec k}(n_0) \frac{1}{\# \mathcal N_{M,n_0}(x)} \sum_{n \in \mathcal N_{M,n_0}(x)} \prod_{j=0}^{r-1} \lambda_{4(k_{j+1}-k_j)}(n) \\
=& \frac{1}{N^{r}}\sum_{\vec k \in \mathbb Z^{r-1}}   \widehat f \bigg(\frac{\vec k}{N}\bigg) \ell_{\vec k}(n_0) (2\alpha(\vec k))^M \left(g(\vec k;Y) +O\left(\frac{\mathcal L(\vec k)}{\log \log x} \ \right) \right)+O\left( \frac{1}{N^{10}}\right).
\end{split}
\end{equation}
Since 
$
\alpha(\vec k)=\sum_{ \substack{[\mathbf S] \in \mathcal M_r \\ k_{\mathbf S}=0}} 1
$
  depends on $\vec k$ in a complex way  it is not immediately clear how to perform the average over $\vec k$. We will
decompose $\mathbb R^{r-1}$ into certain ``subspaces'' on which the value
of $\alpha(\vec k)$ is constant 
(outside other such subspaces of lower
dimension.)

\subsection{The decomposition} \label{sec:decomp}
Given
$S\subset \mathcal M_r$ fix an ordering 
$S=\{[\mathbf S_1],\ldots, [ \mathbf S_{|S|}]\}$ as well as choice of
equivalence class representatives $\mathbf S_i$, $i=1,\ldots, |S|$,
and let us define the  
matrix
  $M_S=(a_{ij}) \in \tmop{Mat}_{ |S| \times r-1}(\mathbb R)$ where
\begin{equation} \label{eq:msdef}
a_{ij}=
\begin{cases}
-1 & \text{ if } \quad j \in \mathbf S_i, j-1 \notin \mathbf S_i, \\
0 & \text{ if } \quad j,j-1\in \mathbf S_i \text{ or } j,j-1 \notin \mathbf S_i, \\
1 & \text{ if }\quad  j-1 \in \mathbf S_i, j \notin \mathbf S_i.
\end{cases}
\end{equation}
While $M_S$ depends on the ordering and choice of equivalence class representatives of the elements of $S$ the kernel of $M_S$ does not and since we will only be concerned with the latter this resolves the issue of indeterminacy for us.
For example, for $r=3$ listing the elements of $S=\mathcal M_3$ as $\{[\emptyset], [\{0\}],[\{1\}],[\{2\}]\}$ gives
\begin{equation} \label{eq:m3}
M_{\mathcal M_3}=
\begin{pmatrix}
0 & 0 \\
1 & 0 \\
-1 & 1 \\
0 & -1
\end{pmatrix}.
\end{equation}

The kernels of the matrices $M_S$ as $S$ ranges over subsets of $ \mathcal M_r$ encode the structure of the $r$-correlation of the lattice points. For example, for the triple correlation (i.e. the left-hand side of \eqref{eq:randomeq1} with $r=3$) it turns out that without specifying further conditions on the Schwartz function $f$, such as $f(0,y)=0$, there are five leading order terms.
The kernels of the matrices $M_S$, $S \subset \mathcal M_3$, correspond to the $x,y$-axes, the
line $y=x$, the origin, and all
  of $\mathbb R^2$ and these five subspaces give rise to the five main terms in the triple correlation. After specifying $f$ to detect only \textit{distinct} angles four of the five main terms vanish and the one remaining main term is $\widehat f(0)$, which is consistent with Poisson spacing statistics.
  For larger $r$, directly computing the $r$-correlation in this way becomes difficult  since the number of subspaces that need to be analyzed grows exponentially with $r$. We will pursue a different approach that uses a random model to indirectly solve this combinatorial problem.
\begin{figure}[h] 
\caption{Plot of the kernels of $M_S$ as $S$ varies over $\mathcal M_3$, which correspond to the $x,y$-coordinate axes, the origin, the line $y=x$ and all of $\mathbb R^2$.}
\centering
\includegraphics[width=0.5\textwidth]{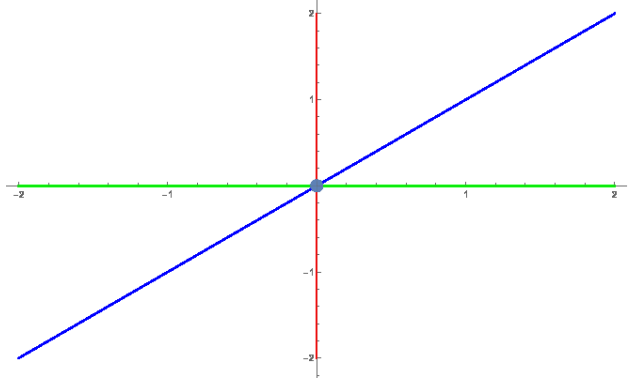}
\label{fig:3corr}
\end{figure}

We now partition $\mathbb R^{r-1}$ into subspaces given by kernels
of $M_S$ as $S$ varies over $S \subset \mathcal M_r$.
Since this is a partition, we need to remove the intersection with other kernels.
We define
\[
\tmop{ker}(M_S)^{\star}=\tmop{ker}(M_S) \setminus \bigcup_{S_1 \subset \mathcal M_r: S_1 \supsetneq S } \tmop{ker}(M_{S_1}).
\]
Let us note that we may have $\tmop{ker}(M_S)=\tmop{ker}(M_{S_1})$ for $S_1 \supsetneq S$, in which case $\tmop{ker}(M_S)^{\star}$ is empty and we say that $S$ is \textit{non-maximal}, we say $S$ is \textit{maximal} if $\tmop{ker}(M_S)$ is non-empty.
We now let  
\begin{equation} \label{eq:vrdef}
\mathcal V_r=\{ \tmop{ker}(M_S)^{\star} : S \subset \mathcal M_r\},
\end{equation}
which gives us our decomposition, into disjoint subsets, 
\begin{equation}\label{eq:decomposition}
\coprod_{V^{\star} \in \mathcal V_r} V^{\star} =\mathbb R^{r-1}.
\end{equation}

\begin{ex} In the case $r=3$, the partition is determined by considering the kernels of the submatrices which consist of collections of rows of
$M_{\mathcal M_3}$ in \eqref{eq:m3} and our partition of $\mathbb R^2$ is
\[
\begin{split}
V_1^{\star}=&\{(x,y) : x \neq 0, y \neq 0, x \neq y\},\\ 
V_2^{\star}=&\{ (x,y) : x=0, y \neq 0\}, \\
V_3^{\star}=&\{(x,y) : y=x, x \neq 0\},\\
V_4^{\star}=&\{ (x,y) : y=0, x \neq 0\}, \\
\end{split}
\]
and $V_5^{\star}=\{(0,0)\}$
 which correspond to the kernels of $M_{\{[ \emptyset] \}}$, $M_{\{[\emptyset],[\{0\}]\}}$, $M_{\{[ \emptyset] ,[\{1\}]\}}$, $M_{\{[ \emptyset] ,[\{2\}]\}}$, $M_{\mathcal M_3}$ respectively, after removing intersections. 
\end{ex}

By construction, for $\alpha(\vec k)$ as defined in \eqref{eq:alphadef}
given $V^{\star} \in \mathcal V_r$ there exists $\alpha_{V^{\star}}
\ge 1$ such that 
\begin{equation} \label{eq:fixed}
\alpha(\vec k) = \alpha_{V^{\star}},  \qquad \forall \vec{k} =
(k_1,\ldots, k_{r-1}) \in V^{\star}. 
\end{equation}
\subsection{Finding cancellation} 

Given $V^{\star} \in \mathcal V_r$ let $ V$ denote the linear span of
the elements 
in $V^{\star}$ and define
\begin{equation}
  \label{eq:d-eq-dim}
d=\tmop{dim}( V).  
\end{equation}
In order to bound $\alpha(\vec k)$ in terms of $d$ we begin with a
simple geometric lemma.
\begin{lem}
\label{lem:hypercube-intersection}
  Let $W \subset \R^{n}$ be a linear subspace of dimension $d$.  Then
  $$
| W \cap \{0,1\}^{n}| \le 2^{d},
$$
i.e., $W$ can intersect the corners of the $n$-dimensional hypercube,
with one corner at the origin, in at most $2^{d}$ points.
\end{lem}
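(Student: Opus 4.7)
The plan is to reduce to the obvious bound $|\{0,1\}^{d}| = 2^{d}$ by projecting $W$ injectively onto a well-chosen set of $d$ coordinates. The key linear-algebraic fact is that any $d$-dimensional subspace of $\mathbb{R}^{n}$ can be written as the graph of a linear map from some $d$ of the $n$ coordinates to the remaining $n-d$ coordinates.

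More precisely, I would first choose any basis $v_{1},\ldots,v_{d}$ of $W$ and form the $d \times n$ matrix $A$ whose rows are the $v_{i}$. Since $\mathrm{rank}(A) = d$, there exist column indices $i_{1} < \cdots < i_{d}$ such that the corresponding $d \times d$ submatrix of $A$ is invertible. Equivalently, the coordinate projection
\[
\pi \colon \mathbb{R}^{n} \longrightarrow \mathbb{R}^{d}, \qquad (x_{1},\ldots,x_{n}) \longmapsto (x_{i_{1}},\ldots,x_{i_{d}})
\]
restricts to a linear \emph{isomorphism} $\pi|_{W} \colon W \to \pi(W) \subseteq \mathbb{R}^{d}$; in particular $\pi|_{W}$ is injective.

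Now consider any $x \in W \cap \{0,1\}^{n}$. Its image $\pi(x)$ lies in $\{0,1\}^{d}$, so $\pi$ maps the finite set $W \cap \{0,1\}^{n}$ into $\{0,1\}^{d}$. Since $\pi|_{W}$ is injective, we conclude
\[
|W \cap \{0,1\}^{n}| \;=\; |\pi(W \cap \{0,1\}^{n})| \;\le\; |\{0,1\}^{d}| \;=\; 2^{d},
\]
as claimed. There is no real obstacle here: the only content is picking the $d$ coordinates on which $W$ is a graph, which is immediate from the row-rank equals column-rank identity.
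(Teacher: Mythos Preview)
Your proof is correct and is essentially identical to the paper's own argument: both pick a basis of $W$, use row rank $=$ column rank to find $d$ coordinates on which $W$ projects isomorphically, and conclude that $W \cap \{0,1\}^n$ injects into $\{0,1\}^d$. The only cosmetic difference is that the paper arranges the basis vectors as columns of an $n \times d$ matrix and phrases the injectivity as ``$B\vec x$ determines $\vec x$,'' whereas you transpose and speak directly of the coordinate projection $\pi|_W$.
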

\begin{proof}
  Choose a basis $\vec w_{1},\ldots, \vec w_{d}$ for $W$ and define a
  $n \times d$ matrix $A = [\vec w_{1} \ldots \vec w_{d}]$.  Since $A$ has
  column rank $d$, there exists $d$ independent rows in $A$, and by
  deleting all other rows we may form an invertible $d \times d$
  matrix $B$.  Now, any point $\vec w \in W$ can be written as $\vec w
  = A \vec x$ 
  for a unique $\vec x \in \R^d$, and we further note that
  $A \vec x \in \{0,1\}^{n}$ implies that $B \vec x \in
  \{0,1\}^{d}$. Thus, since 
  $B$ is invertible there are exactly $2^{d}$ possible choices of $\vec x$
  so that $B \vec x \in \{0,1\}^{d}$, and the claimed upper bound follows. \end{proof}



\begin{lem}
  \label{lem:alpha-bound} With $d$ as in \eqref{eq:d-eq-dim} we have
\begin{equation}  \label{eq:alphabd}
\alpha_{V^{\star}} \le 2^{r-d-1}.
\end{equation}
\end{lem}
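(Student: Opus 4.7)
The plan is to reduce \eqref{eq:alphabd} to a direct application of Lemma~\ref{lem:hypercube-intersection}. The key idea is to encode each $\mathbf S \subset \{0,\ldots,r-1\}$ by its characteristic function $\chi_{\mathbf S} \in \{0,1\}^r$ and to recognize the linear form $\vec k \mapsto k_{\mathbf S}$ as a finite difference of $\chi_{\mathbf S}$.

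First, I would rewrite $k_{\mathbf S}$ as a linear functional on $\mathbb R^{r-1}$. Using $k_0 = k_r = 0$ and a short re-indexing,
\[
k_{\mathbf S}(\vec k) \;=\; \sum_{j \in \mathbf S}(k_{j+1}-k_j) \;=\; \sum_{j=1}^{r-1}\bigl(\chi_{\mathbf S}(j-1) - \chi_{\mathbf S}(j)\bigr)\, k_j.
\]
Introducing the surjective finite-difference map $D\colon \mathbb R^r \to \mathbb R^{r-1}$ defined by $(D\vec c)_j := c_{j-1}-c_j$, whose kernel is the one-dimensional line $\mathbb R\cdot(1,\ldots,1)$, the coefficient vector of the functional $k_{\mathbf S}$ is exactly $D\chi_{\mathbf S}$.

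Next, I would unpack the definitions of Section~\ref{sec:decomp}. Since $V^\star = \ker(M_S)^\star$ for the unique maximal such $S$, and since a finite union of proper subspaces cannot cover $\ker(M_S)$, the linear span satisfies $V = \ker(M_S)$, and on $V^\star$ one has $\alpha(\vec k) = |S|$ by \eqref{eq:fixed}. Collecting both representatives of each equivalence class, let
\[
T \;:=\; \bigl\{\mathbf S \subset \{0,\ldots,r-1\} : k_{\mathbf S} \equiv 0 \text{ on } V\bigr\}.
\]
The identity $k_{\mathbf S} + k_{\mathbf S^c} = k_r - k_0 = 0$ shows $T$ is closed under complementation, and since $\mathbf S \neq \mathbf S^c$ every equivalence class contributes exactly two elements of $T$, giving $|T| = 2\,\alpha_{V^\star}$.

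Finally, $\mathbf S \in T$ is the linear condition $D\chi_{\mathbf S} \in V^\perp$, i.e.\ $\chi_{\mathbf S}$ lies in the subspace $W := D^{-1}(V^\perp) \subset \mathbb R^r$. Since $\dim V^\perp = (r-1)-d$ and $\ker D$ is one-dimensional, $\dim W = (r-1-d)+1 = r-d$. Applying Lemma~\ref{lem:hypercube-intersection} to $W$ yields
\[
|T| \;=\; |W \cap \{0,1\}^r| \;\le\; 2^{\,r-d},
\]
hence $\alpha_{V^\star} = |T|/2 \le 2^{\,r-d-1}$, which is \eqref{eq:alphabd}. I do not expect any real obstruction: once the identification $k_{\mathbf S} \leftrightarrow D\chi_{\mathbf S}$ is made, the proof is linear algebra plus the hypercube lemma. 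The only step needing minor care is the identification $V = \ker(M_S)$, but this is forced by the "$\star$" operation removing only a finite union of proper subspaces from $\ker(M_S)$.
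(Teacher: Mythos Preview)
Your proof is correct and follows essentially the same approach as the paper's: both identify each $\mathbf S$ with its characteristic vector in $\{0,1\}^r$, express $k_{\mathbf S}(\vec k)$ as $\vec k \cdot \phi(\chi_{\mathbf S})$ for a surjective linear map $\phi:\mathbb{R}^r\to\mathbb{R}^{r-1}$ with one-dimensional kernel (your $D$, the paper's $\phi(\vec l)=(l_2-l_1,\ldots,l_r-l_{r-1})$), conclude that the relevant subspace $W=\phi^{-1}(V^\perp)\subset\mathbb{R}^r$ has dimension $r-d$, and apply Lemma~\ref{lem:hypercube-intersection}. One minor point: the equality $\alpha_{V^\star}=|S|$ is not the content of \eqref{eq:fixed} but rather follows from the maximality of $S$ (which you do invoke), since $\vec k\in V^\star$ means precisely that $k_{\mathbf S'}(\vec k)=0$ iff $[\mathbf S']\in S$.
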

\begin{proof}
We begin by introducing some convenient notation to parameterize the
sets of $\vec k$'s on which
$\alpha(\vec k)$ is constant.
Define a bilinear form $ \langle \cdot , \cdot \rangle$
on
$\R^{r} \times \R^{r-1}$ by
\begin{equation}\label{eq:bilinear}
\langle \vec l, \vec k \rangle =
\sum_{j=0}^{r-1} l_{j+1}(k_{j+1}-k_{j}).
\end{equation}
Let $W_r=\mathcal P(\{0,1,\ldots, r-1\})$.
For $\mathbf S \in  W_r$ define $\vec l_{\mathbf S}=(l_j) \in \mathbb R^r$ by $l_j=1$ if $j \in \mathbf S$ and $l_j=0$ if $j \notin \mathbf S$.
Observe that
\begin{equation}\label{eq:alpha12}
 2 \cdot \alpha(\vec k) =
\#\{ \mathbf S \in  W_r : \langle \vec l_{\mathbf S}, \vec k\rangle = 0\},
\end{equation}
where the factor of $2$ accounts for the fact that on the right-hand side for each $\mathbf S \in W_r$ we have also counted $\mathbf S^c$ whereas these sets have been identified in $\mathcal M_r$.


Each element $\mathbf S \in W_r$ corresponds to an element of
$\{0,1\}^r \subset \mathbb R^{r}$ under the bijection
$\mathbf S \rightarrow (1_{j \in \mathbf S})_{0 \le j \le r-1}$.
Using this and \eqref{eq:alpha12} we see that
\begin{equation} \label{eq:alpharexpress}
 2  \alpha(\vec k)=\# \{ \vec l \in \{0,1\}^r : \langle \vec l, \vec k \rangle =0 \}.
\end{equation}
Also for $S\subset W_r$ we define $M_S$ as in \eqref{eq:msdef} (the only difference is that we do not need to fix equivalence class representatives) and also define $\tmop{ker}(M_S)^{\star}$ analogously (which is the same as before since choosing equivalence class representatives does not affect the kernels.)
Additionally, under the bijection described above each $S \subset W_r$
is associated to some $\widetilde S \subset \{0,1\}^r$ so that  we can
re-express $\tmop{ker}(M_S) \subset \R^{r-1}$ as 
\begin{equation} \label{eq:kerexpress}
\tmop{ker}(M_S)=
\{ \vec k \in \R^{r-1} : \langle \vec l, \vec k \rangle  = 0, \, \,
\forall \vec l \in \widetilde S\}. 
\end{equation}
Let
$$
\tmop{ker}(M_S)^{\perp} :=
\{ \vec l \in \R^r : \langle \vec l, \vec k \rangle = 0, \, \, \forall
\vec k \in \tmop{ker}(M_S) \}. 
$$

Note that if $d = \dim(\tmop{ker}(M_S))$, then $\dim(\tmop{ker}(M_S)^{\perp}) = r-d$.  To
see this, write $\langle \vec l, \vec k \rangle = \vec k \cdot
\phi(\vec l)$, where 
$\cdot$ denotes the standard inner product on $\R^{r-1}$, and
$\phi : \R^r \to \R^{r-1}$ is given by the {\em surjection}
$\phi( \vec l) = (l_{2}-l_{1}, l_{3}-l_{2}, \ldots, l_{r}-l_{r-1})$.
Further, if $S$ is maximal and $\vec k \in \tmop{ker}(M_S)^{\star}$, then using \eqref{eq:alpharexpress} and \eqref{eq:kerexpress} we have 
$2 \alpha(\vec k) = \# \widetilde S$; by Lemma~\ref{lem:hypercube-intersection}
$\# \widetilde S \leq 2^{r-d}$ since $\dim(\tmop{ker}(M_{S})^{\perp}) = r-d$, and the proof is
concluded.
\end{proof}

By Lemma \ref{lem:alpha-bound} we have $2\alpha_{V^{\star}} \le
2^{r-d}$ and hence, by \eqref{eq:kavg}, \eqref{eq:decomposition}, and
\eqref{eq:fixed}, to establish Proposition
\ref{prop:corr} it suffices to show for each $V^{\star} \in \mathcal
V_r$ that
\begin{equation} \label{eq:poisson}
\begin{split}
    &\sum_{\vec k \in V^{\star} \cap \mathbb Z^{r-1}}   \widehat f \bigg( \frac{\vec k}{N} \bigg)  \ell_{\vec k}(n_0) \bigg(g(\vec k;Y) +O\bigg(\frac{\mathcal L(\vec k)}{\log \log x} \ \bigg) \bigg)\\
&\qquad \qquad \qquad \qquad \qquad = \sum_{\vec k \in  V^{\star} \cap \mathbb Z^{r-1} }   \widehat f \bigg( \frac{\vec k}{N} \bigg) \ell_{\vec k}(n_0)  +O\left(\frac{N^{d}}{\log \log x}+\frac{N^d}{M^{10}} \ \right).
\end{split}
\end{equation}
Using Proposition \ref{prop:approx} we can express $g(\vec k;Y)$ and $\mathcal L(\vec k)$ in terms of short Dirichlet polynomials of lengths $y=N^{o(1)}$ and since $V \cap \mathbb Z^{r-1}$ is a lattice this will allow us to use Poisson summation to establish \eqref{eq:poisson}.


We first require that the angles $(\theta_p)_{p \equiv 1 \Mod 4}$ are linearly independent $\Mod {2\pi}$ over $\mathbb Q$. We also need a quantitative bound for how close these combinations are to multiples of $2\pi$, which is a consequence of repulsion of angles of Gaussian integers.

\begin{lem} \label{lem:repulsion} Let $J \ge 1$. For each
  $j=1, \ldots, J$ suppose $p_j \equiv 1 \Mod 4$, and
  $p_{i} \neq p_{j}$ if $i \neq j$.
  Then for $c_1,\ldots, c_j \in \mathbb Z$
\begin{equation}\label{eq:indp}
\exp\left(i \sum_{j=1}^J c_j \theta_{p_j} \right) \neq 1
\end{equation}
unless $c_j=0$ for each $j=1, \ldots, J$. Moreover,
\begin{equation} \label{eq:repulsion}
\bigg|\exp\left(i \sum_{j=1}^J c_j \theta_{p_j} \right)-1 \bigg|\ge \frac{1}{ \sqrt{ p_1^{|c_1|} \cdots p_J^{|c_J|}}}.
\end{equation}
\end{lem}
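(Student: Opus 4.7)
\textbf{Proof plan for Lemma \ref{lem:repulsion}.} The plan is to express the exponential as a ratio of a Gaussian integer to a real number, and then exploit unique factorization in $\mathcal O = \Z[i]$ together with the integrality of the real and imaginary parts.

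First, for each $p_j \equiv 1 \Mod 4$ factor $p_j = \pi_j \bar\pi_j$ in $\mathcal O$, with $\pi_j$ the Gaussian prime satisfying $0 < \arg(\pi_j) < \pi/4$, so that $e^{i\theta_{p_j}} = \pi_j/\sqrt{p_j}$ and $e^{-i\theta_{p_j}} = \bar\pi_j/\sqrt{p_j}$. Writing $c_j^+ = \max(c_j, 0)$ and $c_j^- = \max(-c_j, 0)$, set $P = \prod_j p_j^{|c_j|}$ and
\[
\alpha = \prod_{j=1}^{J} \pi_j^{c_j^+} \bar\pi_j^{c_j^-} \in \mathcal O,
\]
so that $\exp(i \sum_j c_j \theta_{p_j}) = \alpha/\sqrt{P}$ and $|\alpha|^2 = P$.

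For \eqref{eq:indp}: the condition $\exp(i \sum c_j \theta_{p_j}) = 1$ is equivalent to $\alpha = \sqrt{P}$, i.e. $\alpha \in \R_{>0}$, which forces $\alpha = \bar\alpha$. Since the $p_j$ are distinct primes $\equiv 1 \Mod 4$, the Gaussian primes $\pi_1, \bar\pi_1, \ldots, \pi_J, \bar\pi_J$ are pairwise non-associate. Comparing the $\pi_j$-adic valuations of the factorizations of $\alpha$ and $\bar\alpha$ then yields $c_j^+ = c_j^-$ for every $j$, and since at least one of $c_j^+, c_j^-$ vanishes we conclude $c_j = 0$ for all $j$.

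For \eqref{eq:repulsion}, suppose some $c_j \neq 0$; by the previous step $\alpha \notin \R$, so $\Im(\alpha) \in \Z \setminus \{0\}$ and $|\Im(\alpha)| \ge 1$. Hence $\Re(\alpha)^2 \le P - 1$, giving $\Re(\alpha) \le \sqrt{P-1}$. Using $|z|=1$, write
\[
\left| \exp\!\left(i \sum_{j} c_j \theta_{p_j}\right) - 1 \right|^2 = 2 - 2\,\Re\!\left(\frac{\alpha}{\sqrt P}\right) = \frac{2(\sqrt P - \Re(\alpha))}{\sqrt P} \ge \frac{2(\sqrt P - \sqrt{P-1})}{\sqrt P} \ge \frac{1}{P},
\]
where the last step uses $\sqrt P - \sqrt{P-1} = 1/(\sqrt P + \sqrt{P-1}) \ge 1/(2\sqrt P)$. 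Taking square roots delivers the desired bound.

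The only mildly subtle step is the unique factorization argument in step one; the rest is a direct computation. I do not anticipate serious obstacles, since the splitting behavior of primes $p \equiv 1 \Mod 4$ in $\Z[i]$ (making $\pi_j, \bar\pi_j$ non-associate) is exactly what makes the conclusion $c_j^+ = c_j^-$ force $c_j = 0$.
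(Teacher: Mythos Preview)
Your argument is correct and, for part~\eqref{eq:indp}, is cleaner than the paper's. Both proofs rest on the same idea---writing $e^{i\theta}=\alpha/\sqrt{P}$ for a Gaussian integer $\alpha$ of norm $P$ and exploiting that $\Im(\alpha)$ is a nonzero integer---but the execution differs. For the linear independence statement you invoke unique factorization in $\Z[i]$ directly: $\alpha\in\R$ forces $\alpha=\bar\alpha$, and comparing $\pi_j$-adic valuations gives $c_j^+=c_j^-$, hence $c_j=0$. The paper instead treats $J=1$ separately via Niven's theorem (that $\theta_p$ is irrational over $\pi$) and for $J\ge 2$ uses an ad~hoc parity/divisibility argument. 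Your route is more uniform and avoids the external input of Niven's theorem. For the quantitative bound~\eqref{eq:repulsion} the two arguments are essentially equivalent: the paper uses the identity $|1-e^{i\theta}|=|e^{i\theta}-e^{-i\theta}|/|1+e^{i\theta}|$ together with $|1+e^{i\theta}|\le 2$, while you use $|e^{i\theta}-1|^2=2(1-\Re(\alpha)/\sqrt{P})$ together with $\Re(\alpha)\le\sqrt{P-1}$; both extract the same factor $1/\sqrt{P}$ from $|\Im(\alpha)|\ge 1$.
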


\begin{proof} WLOG assume $c_j> 0$ (replacing $\theta_p$ with
  $-\theta_p$ if necessary) for each $j$ and recall that
  $p_1, \ldots, p_J$ are distinct. We split the proof into two
  cases. First if $J=1$ we cannot have that $e^{i c \theta_p}=1$ since
  $\theta_p$ is not a rational multiple of $\pi$ by Niven's theorem
  \cite[Th'm 3.11]{niven}.

Let $m=p_2^{c_2/2} \cdots p_J^{c_J/2}$.  For $J \ge 2$ we observe that
if equality in \eqref{eq:indp} holds then there exist
$x+iy,u+iv \in \mathbb Z[i]$ such that
\[
\frac{x+iy}{p_1^{c_1/2}}=\exp\left(i c_1 \theta_p \right)=\exp\left(-i\sum_{j=2}^J c_j \theta_{p_j} \right) =\frac{u+iv}{m}
\]
which implies that
\[
x+iy = \frac{p_1^{c_1/2}}{m} (u+iv)
\]
consequently $c_j$ is even for each $j$ so $m \in \mathbb Z$ and since $(m,p_1)=1$, $m$ divides both $u$ and $v$. Also $m^2=u^2+v^2$ so that $u$ or $v$ equals zero. This implies that $x$ or $y$ equals zero but this is not possible since it would imply that $e^{ic \theta_{p_1}}=1$ for some integer $c$. This proves the first claim.

 Write $\theta=\sum_{j=1}^J c_j \theta_{p_j}$. Assume $e^{i\theta} \neq \pm 1$. There exists $a+ib \in \mathbb Z[i]$ with $a^2+b^2=p_1^{c_1}\cdots p_J^{c_J}$ and $\frac{a+ib}{|a+ib|}=e^{i\theta}$.  Hence,
\[
0<|1-e^{i\theta}|=\bigg|\frac{e^{i\theta}-e^{-i\theta}}{1+e^{i\theta}}\bigg| = \frac{1}{|1+e^{i\theta}|} \cdot \frac{2|b|}{\sqrt{p_1^{c_1}\cdots p_J^{c_J}}}
\]
so that $|b| \ge 1$ and consequently
\[
|1-e^{i\theta}| \ge \frac{1}{ \sqrt{p_1^{c_1}\cdots p_J^{c_J}}}.
\]
\end{proof}

Before proceeding to the next lemma let us recall the bilinear form defined in \eqref{eq:bilinear}. Given $\vec l=(l_1,\ldots, l_{r}) \in \mathbb Z^{r}$   let
\[
k_{\vec l}:=\langle \vec l, \vec k \rangle =\sum_{j=0}^{r-1} l_{j+1} (k_{j+1}-k_j) \in \Z.
\]
Recall that for $\vec l$ such that $l_{j+1}=1$ if $j \in \mathbf S$ and $l_{j+1}=0$ if $j \notin \mathbf S$ for some $[\mathbf S] \in \mathcal M_r$ we have $k_{\vec l}=k_{\mathbf S}$.

\begin{lem} \label{lem:cancellation}
Let $H$ be a Schwartz function and $w > v \ge 0$ be integers.
Let $p_1,\ldots, p_w$ be distinct primes $p_t \equiv 1 \tmod 4$, $t=1,\ldots, w$.
Additionally, let $a_1,\ldots, a_w$ be nonzero integers and $\vec
l_1,\ldots, \vec l_w \in \mathbb Z^{r-1}$ be such that $\lVert \vec l_t
\rVert_{\infty} \le |a_t|$ for each $t=1,\ldots,w$. Suppose
$p_1^{|a_1|}\cdots p_w^{|a_w|} \le N^{\delta}$ for some sufficiently small
$\delta>0$. 
Then we have that
\[
\sum_{\substack{\vec k \in V^{\star} \cap \mathbb Z^{r-1} \\ k_{\vec l_t} \neq 0, t=v+1, \ldots, w}}  \exp\bigg(4i\sum_{t=1}^w k_{\vec l_t} \theta_{p_t}\bigg) H\bigg( \frac{\vec k}{N}\bigg)\ll N^{d-1+o(1)}
\]
where the implied constant depends at most on $r$ and $H$.
\end{lem}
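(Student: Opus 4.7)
The plan is to extract cancellation from the exponential sum via Poisson summation on the lattice $V \cap \mathbb Z^{r-1}$, using the quantitative $\mathbb Q$-linear independence of the angles $\theta_{p_t}$ supplied by Lemma~\ref{lem:repulsion}. Let $V := \tmop{span}(V^\star)$; since $V$ is cut out by integer linear equations, $\Lambda := V \cap \mathbb Z^{r-1}$ is a rank-$d$ lattice, and I fix a Minkowski-reduced $\mathbb Z$-basis $\vec v_1, \ldots, \vec v_d$ so that the entries of each $\vec v_j$ are $O_r(1)$. The difference $V \setminus V^\star$ is a finite union of proper rational subspaces of $V$ (coming from \eqref{eq:vrdef}), contributing at most $\ll N^{d-1}$ by the trivial bound on each lower-dimensional sum.

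Next, I would remove the conditions $k_{\vec l_t} \neq 0$ (for $t = v+1, \ldots, w$) by inclusion-exclusion, yielding at most $2^{w-v}$ pieces indexed by $T \subset \{v+1, \ldots, w\}$, each restricted to $\Lambda_T := \Lambda \cap \bigcap_{t \in T} \{k_{\vec l_t} = 0\}$. If the original summation set is nonempty, each functional $k_{\vec l_t}$ for $t > v$ must be nontrivial on $V$, so $\tmop{rank}(\Lambda_T) \le d-1$ for every nonempty $T$, and the trivial bound gives $\ll N^{d-1}$ for each such term. It therefore remains to handle the $T = \emptyset$ case, which is the full sum over $\Lambda$.

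For this, write $\vec k = \sum_{j=1}^d m_j \vec v_j$ and reformulate the sum as
\[
\sum_{\vec m \in \mathbb Z^d} \widetilde H(\vec m/N)\, e^{4i \vec m \cdot \vec \alpha}, \qquad \alpha_j := \sum_{t=1}^w k_{\vec l_t}(\vec v_j)\, \theta_{p_t},
\]
where $\widetilde H(\vec x) := H(\sum_j x_j \vec v_j)$ remains Schwartz. Poisson summation in $\vec m$ bounds this by $\ll N^d \sum_{\vec n \in \mathbb Z^d} |\widehat{\widetilde H}(N(\vec n - 2\vec\alpha/\pi))|$. Since the summation set is nonempty, there is $\vec k_0 \in V^\star$ with $k_{\vec l_t}(\vec k_0) \neq 0$ for $t > v$; Lemma~\ref{lem:repulsion} then shows $\sum_t k_{\vec l_t}(\vec k_0)\, \theta_{p_t} \notin (\pi/2)\mathbb Z$, so by linearity there exists $j_0$ with $4\alpha_{j_0} \notin 2\pi \mathbb Z$. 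The integer coefficients $4k_{\vec l_t}(\vec v_{j_0})$ appearing in this combination are $O(|a_t|)$ by the basis choice, whence \eqref{eq:repulsion} yields
\[
\lVert 4\alpha_{j_0}/(2\pi) \rVert_{\mathbb R/\mathbb Z} \gg (p_1^{|a_1|} \cdots p_w^{|a_w|})^{-C} \ge N^{-C\delta}
\]
for some constant $C$ depending only on $r$. Consequently the $j_0$-component of $N(\vec n - 2\vec\alpha/\pi)$ is $\gg N^{1-C\delta}$ for every $\vec n \in \mathbb Z^d$, and rapid decay of $\widehat{\widetilde H}$ gives $\ll N^d (N^{1-C\delta})^{-A} \ll N^{-10}$ for $A$ sufficiently large (provided $\delta$ is small enough). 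Combining all cases yields the claimed $\ll N^{d-1+o(1)}$.

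The main obstacle is ensuring that the repulsion bound actually bites: the integer coefficients $k_{\vec l_t}(\vec v_{j_0})$ in $4\alpha_{j_0}$ must remain of size $O(|a_t|)$ for \eqref{eq:repulsion} to produce only $N^{-O(\delta)}$ savings, which forces both the Minkowski-reduced choice of $\mathbb Z$-basis for $\Lambda$ and the hypothesis that $\delta$ be small relative to constants depending on $r$. A secondary issue is carefully tracking how inclusion-exclusion interacts with the excision of the lower-dimensional pieces of $V^\star$ so that no compound error term exceeds $N^{d-1+o(1)}$.
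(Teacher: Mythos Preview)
Your overall strategy matches the paper's: pass from $V^\star$ to the full lattice $\Lambda = V \cap \Z^{r-1}$, choose a $\Z$-basis with $O_r(1)$ entries, and exploit Poisson summation together with the repulsion estimate of Lemma~\ref{lem:repulsion}. The paper differs only in that it performs one-dimensional Poisson summation in a single coordinate $v_1$ (chosen so that the relevant linear form is nontrivial in that coordinate) rather than your full $d$-dimensional Poisson; either variant is fine.

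There is, however, a quantitative slip in your removal of the constraints $k_{\vec l_t}\neq 0$. You invoke inclusion--exclusion over all $T\subset\{v+1,\dots,w\}$ and bound each nonempty-$T$ piece trivially by $N^{d-1}$, but you are then summing $2^{w-v}-1$ such terms. Under the hypothesis $p_1^{|a_1|}\cdots p_w^{|a_w|}\le N^\delta$ one only obtains $w\le (\delta\log N)/\log 5$, so $2^{w-v}$ can be as large as $N^{(\log 2/\log 5)\delta}$, a fixed positive power of $N$ rather than $N^{o(1)}$; the resulting bound $N^{d-1+c\delta}$ does not yield the stated $N^{d-1+o(1)}$. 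The fix is immediate: instead of full inclusion--exclusion, observe that passing from the constrained sum to the free sum over $\Lambda$ adds back only points in the union $\bigcup_{t=v+1}^{w}(\Lambda\cap\{k_{\vec l_t}=0\})$, a union of $w-v\le w=O(\log N)=N^{o(1)}$ sublattices each of rank $\le d-1$, giving an acceptable error $\ll w\,N^{d-1}=N^{d-1+o(1)}$. (The paper achieves the same effect by extending the sum in the single variable $v_1$: for each fixed $(v_2,\dots,v_d)$ this adds back at most $w$ individual points, again costing $\ll w\,N^{d-1}$.)
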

\begin{proof}
  Recall for each $V^{\star} \in \mathcal V_r$ that
  $ V=\tmop{ker}(M_S)$ for some $S \subset \mathcal M_r$.
%
  Since a submodule $M$ of a finitely generated $\Z$-module of rank
  $r-1$  is also free, and of rank $d \leq r-1$ (cf. \cite[Proposition
  9.7]{rotman-advanced-modern-algebra-book}), there exists a basis
  $B=\{\mathbf b_1,\ldots,\mathbf b_d\}$ for $ V \cap \mathbb Z^{r-1}$ such
  that
\begin{equation} \label{eq:Bdef}
   V \cap \mathbb Z^{r-1}=\bigg\{ \sum_{\ell=1}^d \mathbf b_{\ell} v_{\ell} : (v_1,\ldots, v_d) \in \mathbb Z^d \bigg\}, \qquad \mathbf b_{\ell}=\begin{pmatrix} 
b_{\ell,1} \\
\vdots \\
b_{\ell,(r-1)} 
\end{pmatrix},
\end{equation}
and $b_{\ell,j} \in \mathbb Z$ for each $\ell=1,\ldots,d$,
$j=1,\ldots, r-1$. Define
$c_{\ell,\vec l}=\sum_{j=0}^{ r-1} 
l_{j+1}(b_{\ell, (j+1)}-b_{\ell,j})$ where $l_{1}, \ldots, l_{r-1}$
denotes the components of $\vec l$,
and where we use the convention
$b_{\ell,0}=b_{\ell,r}=0$; note that $b_{\ell,j} =O_{r}(1)$,
and thus $c_{\ell,\vec l} =O_{r}(\lVert \vec l \rVert_{\infty})$ since the number
of vector spaces $V$ is $O_{r}(1)$
(cf. Lemma~\ref{lem:bounded-number-of-W}.) 
Using \eqref{eq:Bdef} we have, 
with $k_{j}$ denoting the $j$-th coordinate of $\vec{k} \in V \cap
\Z^{r-1}$, that 
$k_j=\sum_{\ell=1}^d b_{\ell,j}v_{\ell}$ for each $j=1,\ldots, r-1$,
and $k_{\vec l}=\sum_{\ell=1}^d v_{\ell} c_{\ell,\vec l}$ for each
$\vec l \in \mathbb Z^{r}$. 
Let
\[
G(x_1, \dots, x_{d})= H\left( \sum_{\ell=1}^d b_{\ell,1}x_{\ell}  , \ldots, \sum_{\ell=1}^d b_{\ell,{r-1}}x_{\ell}  \right).
\]
Observe that $G: \mathbb R^d \rightarrow \mathbb R$ is a Schwartz
function. Also, $\sum_{\vec k \in ( V \setminus V^{\star}) \cap
  \mathbb Z^{r-1}} |H(\vec k/N)| \ll N^{d-1+o(1)}$ as $
V\setminus V^{\star}$ consists of lower dimensional subspaces.
Hence, we have that
\begin{equation} \label{eq:expand}
\begin{split}
& \sum_{\substack{\vec k \in V^{\star} \cap \mathbb Z^{r-1} \\ k_{\vec l_t} \neq 0 \\ t=v+1, \ldots, w}}  \exp\bigg(4i\sum_{t=1}^w k_{\vec l_t} \theta_{p_t}\bigg) H\bigg( \frac{\vec k}{N}\bigg) 
= \sum_{\substack{\vec k \in V^{} \cap \mathbb Z^{r-1} \\ k_{\vec l_t} \neq 0 \\ t=v+1, \ldots, w}}  \exp\bigg(4i\sum_{t=1}^w k_{\vec l_t} \theta_{p_t}\bigg) H\bigg( \frac{\vec k}{N}\bigg) +O(N^{d-1+o(1)}) \\
&=\sum_{\substack{\vec v \in \mathbb Z^{d} \\ \sum_{\ell=1}^d v_{\ell}c_{\ell,\vec l_t} \neq 0 \\ t=v+1, \ldots, w}} \prod_{\ell=1}^d \bigg( \exp\bigg(4i v_{\ell} \sum_{t=1}^w c_{\ell,\vec l_t} \theta_{p_t}\bigg) \bigg) G\bigg( \frac{\vec v}{N}\bigg)+O(N^{d-1+o(1)}).
\end{split}
\end{equation}
Since $\sum_{\ell=1}^d v_{\ell} c_{\ell,\vec l_t} \neq 0$, WLOG assume $c_{1,\vec l_w} \neq 0$.
Write $E$ for the subset of $\mathbb Z^d$ such that $\sum_{\ell=1}^d v_{\ell} c_{\ell,\vec l_t} \neq 0$ for each $t=v+1,\ldots, w$. The left-hand side  of \eqref{eq:expand} is
\begin{equation}\label{eq:poiss1}
\ll  \sum_{|v_2|, \ldots, |v_d| \ll N^{1+o(1)}} \bigg| \sum_{v_1 \in \mathbb Z} 1_{\vec v \in E}  G \bigg(\frac{ v_1}{N}, \frac{v_2}{N}, \ldots, \frac{v_d}{N} \bigg) \exp\left( 4i v_{1} \sum_{t=1}^w c_{\ell,\vec l_t} \theta_{p_t}\right)    \bigg|+N^{d-1+o(1)}.
\end{equation}
 We wish to extend the inner sum to all of $\mathbb Z$. To do this,
 for each $t =1,\ldots, w$ with $c_{1,\vec l_t} \neq 0$ we need to add back in the
 point $v_{1}$ such that $c_{1,\vec l_t} v_1= -\sum_{\ell=2}^d
 v_{\ell} c_{\ell,\vec l_t}$. Since there are $\le w=N^{o(1)}$ such
 points,
 we can extend the inner sum to all of $\mathbb Z$ at the cost of an
 error term of size 
\begin{equation} \label{eq:poiss2}
\ll N^{o(1)}
 \sum_{|v_2|, \ldots, |v_d| \ll N^{1+o(1)}} 1 \ll N^{d-1+o(1)}.
\end{equation}
Let $\theta=4  \sum_{t=1}^w c_{\ell,\vec l_t} \theta_{p_t}$ and WLOG we may assume $-\pi \le \theta < \pi$ (since $v_1 \in \mathbb Z$.) 
Applying Poisson summation 
  and using \eqref{eq:indp} (which implies $e^{i\theta} \neq 1$) we have that
\[
\begin{split}
\sum_{v_1 \in \mathbb Z}  G \left(\frac{v_1}{N},\frac{v_2}{N}, \ldots, \frac{v_d}{N} \right) e^{i\theta v_1} =&N \sum_{a \in \mathbb Z} \int_{\mathbb R}  G \left(x, \frac{v_2}{N}, \ldots, \frac{v_d}{N} \right) e^{iN(\theta-2\pi a) x } \, d x \\
 \ll &  N \sum_{a \in \mathbb Z} \frac{1}{N^B |\theta-2\pi a|^B}
 \end{split}
\]
for any integer $B \ge 0$. Since  $c_{\ell,\vec l_{t}} \ll |a_t|$, we have by \eqref{eq:repulsion} and the assumption $p_1^{|a_1|}\cdots p_w^{|a_w|} \le N^{\delta}$ that $|\theta|  \gg N^{-1/2}$ since $\delta$ is sufficiently small. Hence, since $-\pi \le \theta < \pi$ this implies that the right-hand side above is 
\[
\ll N^{1-B} \frac{1}{|\theta|^B} \ll N^{-100}.
\]
Combining this estimate with \eqref{eq:poiss1} and \eqref{eq:poiss2} completes the proof.
\end{proof}
\begin{lem} \label{lem:errorbd}
For each $V^{\star} \in \mathcal V_r$ with $d=\tmop{dim}(V)$ we have that
 \[
 \sum_{\vec k \in V^{\star} \cap \mathbb Z^{r-1}} \mathcal L(\vec k) \,\, \bigg| \widehat f\bigg( \frac{\vec k}{N}\bigg)\bigg| \mathcal   \ll N^{d},
\]
with $\mathcal L(\vec k)$ as given in
\eqref{eq:glkdef}. The implied constant depends on at most $A', n_0$, and $f$.
\end{lem}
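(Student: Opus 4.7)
The plan is to combine the Schwartz decay of $\widehat f$ with a foliation of the lattice $V \cap \mathbb Z^{r-1}$ along level sets of the linear forms $\vec k \mapsto k_{\mathbf S}$, and then to invoke a mean-value bound for Hecke $L$-functions at $s=1$.

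First, since $|\widehat f(\vec k/N)| \ll_B (1+\|\vec k\|_\infty/N)^{-B}$ for every $B$, a dyadic decomposition in $\|\vec k\|_\infty$ reduces the claim to the uniform estimate
\[
\sum_{\substack{\vec k \in V^{\star} \cap \mathbb Z^{r-1}\\ \|\vec k\|_\infty \le R}} \mathcal L(\vec k) \ll R^d \qquad (R \ge 1),
\]
after which choosing $B > d$ and summing $\sum_{j \ge 0} 2^{-jB}(2^j N)^d \ll N^d$ yields the desired bound. Next, since $\mathcal L$ consists of $O_r(1)$ terms $|L(1, 8 k_{\mathbf S})|^{\pm A'}$ indexed by $[\mathbf S] \in \mathcal M_r$ with $k_{\mathbf S} \neq 0$, I handle each separately. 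Fix $[\mathbf S]$ and a sign. The map $\vec k \mapsto k_{\mathbf S}$ is an integer linear form on the rank-$d$ lattice $V \cap \mathbb Z^{r-1}$; if it vanishes identically on $V$, the term contributes nothing on $V^{\star}$, so assume otherwise. Choosing a $\mathbb Z$-basis $\mathbf b_1, \ldots, \mathbf b_d$ and writing $k_{\mathbf S}(\sum v_i \mathbf b_i) = \sum v_i c_i$ with $(c_i) \in \mathbb Z^d \setminus \{0\}$, a standard lattice point count (WLOG $c_1 \neq 0$, solve for $v_1$) shows that each fiber $\{\vec k : k_{\mathbf S} = m,\, \|\vec k\|_\infty \le R\}$ has $\ll_V R^{d-1}$ points, uniformly in $m$. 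Hence
\[
\sum_{\substack{\vec k \in V^{\star} \cap \mathbb Z^{r-1}\\ \|\vec k\|_\infty \le R}} |L(1, 8 k_{\mathbf S})|^{\pm A'} \ll R^{d-1} \sum_{0 < |m| \ll R} |L(1, 8m)|^{\pm A'},
\]
with the lower-dimensional strata $V \setminus V^{\star}$ contributing only $\ll R^{d-1}(\log R)^{O(1)} = o(R^d)$ by Coleman's bound \eqref{eq:cole}.

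The final step is the mean-value estimate $\sum_{|m| \le R} |L(1, 8m)|^{\pm A'} \ll_{A'} R$ for the family of Hecke $L$-functions on $\mathbb Q(i)$ at $s = 1$. The positive moment follows from a short Dirichlet polynomial approximation in the spirit of \eqref{eq:lzform} combined with Parseval-type cancellation across the Grössencharakters $\Xi_{8m}$; the negative moment uses that $\Xi_{8m}$ is complex for $m \neq 0$, so no Siegel-zero obstruction arises, and the same approximation argument delivers the bound. This gives $\ll R^d$ per fixed $[\mathbf S]$, and assembling the pieces proves the lemma.

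The main obstacle is precisely this mean-value input. Coleman's pointwise estimate alone gives $|L(1, 8m)|^{\pm A'} \ll (\log |m|)^{O(A')}$, which when summed over $|m| \le R$ produces a spurious $(\log R)^{O(1)}$ factor; translated back, this factor cannot be absorbed into the $1/\log\log x$ saving appearing in \eqref{eq:poisson}, since in our range $M \asymp \log\log x$ forces $\log N \asymp \log\log x$. The truly linear-in-$R$ mean value---particularly for the negative moment---is therefore the essential arithmetic input required to make the whole scheme work.
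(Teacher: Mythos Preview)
Your reduction via foliation is sound, and the approach is genuinely different from the paper's. The paper does not pass through a mean-value bound for $|L(1,8m)|^{\pm A'}$ at all. Instead it keeps the full $\vec k$-sum intact, expands $L(1,8k_{\mathbf S})^{u}$ (for integer $u$) as a short Dirichlet polynomial via \eqref{eq:lzform} with $y=e^{(\log N)^{3/4}}$, swaps the order of summation, and then for each fixed $m\le y$ invokes Lemma~\ref{lem:cancellation}: Poisson summation over the lattice $V\cap\Z^{r-1}$ together with the angular repulsion bound of Lemma~\ref{lem:repulsion} yields $\sum_{\vec k\in V^{\star}}\lambda_{8k_{\mathbf S}}(m;u)H(\vec k/N)\ll N^{d-1+o(1)}$ whenever $m$ is not of the form $l^{2}$ or $2l^{2}$. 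The remaining ``diagonal'' terms $m=l^{2},2l^{2}$ are handled trivially by \eqref{eq:poissonlattice}, and their convergent sum over $l$ gives the claimed $N^{d}$. So the paper extracts cancellation in the $\vec k$-direction, whereas you extract it in the $m$-direction after freezing the fiber.

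Your route is viable, but the proof as written has a gap: the linear mean-value $\sum_{0<|m|\le R}|L(1,8m)|^{\pm A'}\ll R$ is asserted rather than proved, and you correctly flag it as the crux. It \emph{can} be established with exactly the tools already in the paper---approximate $L(1,8m)^{u}$ by $\sum_{n\le y}\lambda_{8m}(n;u)/n$ with $y=R^{\epsilon}$ via \eqref{eq:lzform}, sum the resulting geometric series in $m$, and use Lemma~\ref{lem:repulsion} to bound $1/|\sin 4\theta_{\mathfrak a}|\ll n^{O(1)}\le y^{O(1)}$ for the nondegenerate ideals, while the degenerate ones (norms $l^{2},2l^{2}$) give the honest $R$-main term---but none of this is in your write-up. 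In effect the two approaches rest on the same two ingredients (short Dirichlet polynomial approximation and repulsion), just assembled in a different order; the paper's ordering has the advantage that Lemma~\ref{lem:cancellation} is already on the shelf and no separate mean-value lemma needs to be stated or proved.
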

\begin{proof}
Observe that there exists a Schwartz function $H$ with $|\widehat f| \le H$.
Recalling
\eqref{eq:glkdef} it suffices to show that 
\begin{equation} \label{eq:reduction}
\sum_{\substack{\vec k \in V^{\star} \cap \mathbb Z^{r-1} \\ k_{\mathbf S} \neq 0}}
L(1,8k_{\mathbf S})^u  H\bigg( \frac{\vec k}{N} \bigg)  \ll N^d 
\end{equation}
for any $[\mathbf S] \in \mathcal M_r$ and $u \in \mathbb Z$ fixed, $u \neq
0$ (note that $L(1,4k) >0$ for any $k \in \mathbb Z$, $k \neq 0$.) Since $H$ is a Schwartz function the sum is effectively restricted
to $\lVert \vec k\lVert_{\infty} \le N^{1+o(1)}$. Applying part~2 of Lemma
\ref{prop:approx} with
$y=e^{(\log N)^{3/4}}$ we have for $|k_{\mathbf S}| \le 
N^{1+o(1)}$ that 
\begin{equation} \label{eq:lexpand}
L(1, 8k_{\mathbf S})^u=\sum_{m \le y} \frac{\lambda_{8k_{\mathbf S}}(m;u)}{m}+O\left(e^{-(\log N)^{1/12-o(1)}} \right).
\end{equation}
Applying \eqref{eq:lexpand} we see that the left-hand side of \eqref{eq:reduction} equals
\begin{equation}\label{eq:expandl}
\sum_{m \le y} \sum_{\substack{\vec k \in V^{\star} \cap \mathbb Z^{r-1} \\ k_{\mathbf S} \neq 0}} \frac{\lambda_{8k_{\mathbf S}}(m;u)}{m} H\bigg(\frac{\vec k}{N}\bigg)+O\bigg(N^d e^{-(\log N)^{-1/12+o(1)}}\bigg),
\end{equation}
where we have used that 
\begin{equation}\label{eq:poissonlattice}
\sum_{\vec k \in  V \cap \mathbb Z^{r-1}} H\bigg(\frac{\vec k}{N}\bigg) \ll N^d 
\end{equation}
to estimate the error term. To see why \eqref{eq:poissonlattice}
holds, note that $ V \cap \mathbb Z^{r-1}$ is a lattice (of rank $d$),
so by applying Poisson summation the bound follows.  Using the
definition of $\lambda_{8k}(m;u)$ as given in \eqref{eq:lzdef}
we have for each $m \le y$ that
\[
\sum_{\substack{\vec k \in V^{\star} \cap \mathbb Z^{r-1} \\ k_{\mathbf S} \neq 0}} \lambda_{8k_{\mathbf S}}(m;u)H\bigg( \frac{\vec k}{N}\bigg)=\sum_{\mathfrak b \subset \OO: |\OO/\mathfrak b|=m} b_u(\mathfrak b) \sum_{\substack{\vec k \in V^{\star} \cap \mathbb Z^{r-1} \\ k_{\mathbf S} \neq 0}} \Xi_{8k_{\mathbf S}}(\mathfrak b) H\bigg( \frac{\vec k}{N} \bigg).
\]

We first consider $m$ for which there is cancellation in the
$\vec{k}$-sum, namely 
 $m \le y$ which are not of the form $l^2$ or $2l^2$. 
 For each $\mathfrak b \subset \OO$ with $|\OO/\mathfrak b|=m$ by considering the factorization of $\mathfrak b$ into prime ideals, we see that in this case there exist distinct primes $p_1,\ldots, p_w$ each $\equiv1 \tmod 4$ and non-zero integers $a_1,\ldots, a_w$ such that $\Xi_{8k}(\mathfrak b)=e^{i8k \sum_{j=1}^w a_j \theta_{p_j}}$ and $p_1^{|a_1|} \cdots p_w^{|a_w|} \le y$ (note that $\Xi_{8k}((1+i))=1$.)
Hence using these observations and recalling $b_u(\mathfrak b) \ll |\OO/\mathfrak b|^{o(1)}$, applying Lemma \ref{lem:cancellation} with $v=0$ and $\vec l_j$ such that $k_{\vec l_j}=2a_j k_{\mathbf S}$ for each $j=1,\ldots, w$, gives for $m\le y$, which is not of the form $m=l^2,2l^2$, that
\begin{equation}\label{eq:one}
\bigg| \sum_{\substack{\vec k \in V^{\star} \cap \mathbb Z^{r-1} \\ k_{\mathbf S} \neq 0}} \lambda_{8k_{\mathbf S}}(m;u) H\bigg(\frac{\vec k}{N}\bigg)  \bigg| \ll N^{d-1+o(1)}.
\end{equation}

To bound the contribution of the terms with $m=l^2,2l^2$ we use that $\lambda_{8k_{\mathbf S}}(m;u)\ll m^{o(1)}$ along with \eqref{eq:poissonlattice}. Combining this with \eqref{eq:one} we conclude that the main term in the left-hand side of \eqref{eq:expandl} is
\[
\ll N^d \sum_{l^2 \le y}\frac{1}{l^{2-o(1)}} + N^{d-1+o(1)} \ll N^d,
\]
which completes the proof.
\end{proof}

\begin{lem} \label{lem:errorbd2} Let $V^{\star} \in \mathcal V_R$ and $m$ be a square free integer such that $(m,2n_0)=1$ and
  $2 \le m \le N^{\delta}$, where $\delta$ is sufficiently
  small. We have
  that
 \begin{equation}\label{eq:maintermcancellation}
 \sum_{\vec k \in V^{\star} \cap \mathbb Z^{r-1}} \ell_{\vec k}(n_0)s(m;\vec k) \,\, \widehat f\bigg( \frac{\vec k}{N}\bigg) \mathcal   \ll N^{d-1+o(1)},
 \end{equation}
 where $\ell_{\vec k}(n_0)$ is as defined in \eqref{eq:elldef} and
 $s(m;\vec k)$ is  as in \eqref{eq:wsdef}. The implied constant depends
 at most on $r,n_0$ and $f$. 
 \end{lem}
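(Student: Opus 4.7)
The plan is to mimic the proof of Lemma~\ref{lem:errorbd}: expand the two arithmetic factors into finite sums of pure exponentials and apply Lemma~\ref{lem:cancellation}.

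If $m$ has a prime factor $q \equiv 3 \tmod 4$, then $m$ is not a sum of two squares, so $\lambda_{8k_{\mathbf S}}(m) = 0$ for every $k_{\mathbf S} \in \Z$, hence $s(m;\vec k) \equiv 0$ and the bound is trivial. So assume $m = p_1 \cdots p_w$ with distinct primes $p_i \equiv 1 \tmod 4$ and $w = \Omega_1(m) \ge 1$. Multiplicativity gives
\[
\lambda_{8k_{\mathbf S}}(m) = \sum_{\vec\epsilon \in \{\pm 1\}^w} \exp\Bigl(i 8 k_{\mathbf S} \sum_{i=1}^{w} \epsilon_i \theta_{p_i}\Bigr).
\]
Next, expand $\ell_{\vec k}(n_0)$ by writing each $\lambda_{4k}(n_0) = \sum_{\beta} e^{i 4k \theta_\beta}$ as a sum over the $r(n_0) = O_{n_0}(1)$ ideal generators $\beta$ of norm $n_0$. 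Decomposing each $\theta_{\beta_j}$ modulo $\pi/2$ via the prime-ideal factorization of $(\beta_j)$ in $\OO$: the ramified prime above $2$ contributes an integer multiple of $\pi/2$ (trivial modulo $\pi/2$), the inert primes contribute $0$, and each split prime $q \equiv 1 \tmod 4$ dividing $n_0$ contributes a bounded integer multiple of $\theta_q$.

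Combining these two expansions, $\ell_{\vec k}(n_0)\, s(m;\vec k)$ becomes a sum of $O_{r,n_0}(2^w)$ terms of the shape
\[
C \cdot 1_{k_{\mathbf S} \ne 0} \cdot \exp\Bigl(i 4 \sum_{t=1}^{v+w} k_{\vec l_t} \theta_{p_t}\Bigr),
\]
where $|C| = O(1)$, the primes $p_1, \ldots, p_{v+w}$ are pairwise distinct (since $(m, n_0) = 1$), the first $v$ are the split primes dividing $n_0$ (with $\|\vec l_t\|_\infty = O_{n_0}(1)$), and the remaining $w$ are the prime factors of $m$ (with $k_{\vec l_t} = 2 \epsilon_{t-v} k_{\mathbf S}$ and $\|\vec l_t\|_\infty \le 2$). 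Now apply Lemma~\ref{lem:cancellation} term by term with $H$ a Schwartz majorant of $|\widehat f|$, and with the lemma's parameters $a_t$ chosen of magnitude $\ge \max(\|\vec l_t\|_\infty, 2) = O_{n_0}(1)$; the size hypothesis becomes $\prod_t p_t^{|a_t|} \ll_{n_0} m^{O(1)} \le N^{O(\delta)}$, which is satisfied for $\delta$ sufficiently small. Crucially, the constraint $k_{\vec l_t} \ne 0$ for $t > v$ demanded by that lemma is identical to $k_{\mathbf S} \ne 0$, which is already enforced by $s(m;\vec k)$. Hence each term contributes $\ll N^{d-1+o(1)}$, and since $2^w = d(m) \ll m^{o(1)} \ll N^{o(1)}$, summing over all terms yields the desired bound.

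The main point to verify carefully --- and the reason this cancellation exists --- is that $k_{\mathbf S} \ne 0$ produces a nonzero coefficient $k_{\vec l_t}$ on at least one prime $p_t \mid m$, a prime that is disjoint from the primes of $n_0$. This ``new'' prime is exactly what drives the Poisson-summation cancellation inside Lemma~\ref{lem:cancellation}; without it one would only recover the trivial bound $N^{d+o(1)}$.
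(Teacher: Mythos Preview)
Your approach is the same as the paper's, and the argument is essentially correct, but there is one misreading you should fix. You treat $s(m;\vec k)$ as if it were $\sum_{[\mathbf S]:\, k_{\mathbf S}\ne 0}\lambda_{8k_{\mathbf S}}(m)$, but by definition (cf.~\eqref{eq:wsdef}) $s(\cdot;\vec k)$ is the \emph{multiplicative} function on square free integers with $s(p;\vec k)=\sum_{[\mathbf S]:\,k_{\mathbf S}\ne 0}\lambda_{8k_{\mathbf S}}(p)$. Hence for $m=p_1\cdots p_w$ one has
\[
s(m;\vec k)=\prod_{i=1}^{w}\ \sum_{\substack{[\mathbf S_i]\in\mathcal M_r\\ k_{\mathbf S_i}\ne 0}}\lambda_{8k_{\mathbf S_i}}(p_i),
\]
so each prime factor of $m$ carries its \emph{own} choice of $[\mathbf S_i]$, and the exponential attached to $p_i$ is $e^{\pm 8ik_{\mathbf S_i}\theta_{p_i}}$ rather than $e^{\pm 8ik_{\mathbf S}\theta_{p_i}}$ for a single common $\mathbf S$.

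This changes two things in your write-up: the term count becomes $O_{r,n_0}\bigl((2\,|\mathcal M_r|)^{w}\bigr)$ rather than $O_{r,n_0}(2^{w})$, and the single indicator $1_{k_{\mathbf S}\ne 0}$ becomes the tuple of constraints $k_{\mathbf S_i}\ne 0$ for $i=1,\ldots,w$. Neither change damages the argument: since $|\mathcal M_r|\le 2^{r-1}$ one still has $(2|\mathcal M_r|)^{w}\le 2^{rw}\ll m^{o(1)}\ll N^{o(1)}$, and the constraints $k_{\mathbf S_i}\ne 0$ are precisely the conditions $k_{\vec l_t}\ne 0$ for $t>v$ required by Lemma~\ref{lem:cancellation}. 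With this correction your proof coincides with the paper's.
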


\begin{proof}
 Let us write $n_0=2^a
 q_1^{2b_1}\cdots q_u^{2b_u} p_1^{a_1} \cdots p_v^{a_v}$ where
 $a,b_1,\ldots, b_u,a_1,\ldots, a_v$ are non-negative integers, $q_j$
 are primes $\equiv 3 \Mod 4$, $j=1, \ldots, u$, and $p_j$ are distinct
 primes  $\equiv 1 \Mod 4$, $j=1, \ldots, v$. 
Plainly, $\lambda_{4k}(q^{2b})=1$ for any $b \ge 0$ and prime $q\equiv 3 \Mod 4$.
Also since $(2)=((1+i)^2)$ as ideals in $\OO$ we have $\lambda_{4k}(2^a)=(-1)^{ka}$ so
using that $\sum_{j=0}^{r-1} (k_{j+1}-k_j)=0$ we have $\prod_{j=0}^{r-1} \lambda_{4(k_{j+1}-k_j)}(2^a)=1$.
Additionally, for $p\equiv 1 \tmod 4$ let $\pi \in \mathbb Z[i]$ be such that $\pi=\sqrt{p} e^{i\theta_p}$.
Observe that
\[
  \lambda_{4k}(p^a)=
  \frac{1}{p^{2ka}} \sum_{l=0}^a \pi^{4kl} \overline  \pi^{4k(a-l)}=
  e^{-4kai\theta_p} \sum_{l=0}^a e^{8kl i\theta_p}.
\]
Combining the observations above and recalling that $\sum_{j=0}^{r-1}
(k_{j+1}-k_j)=0$ we have that
\begin{equation} \label{eq:ellform}
\begin{split}
\ell_{\vec k}(n_0)&= \prod_{t=1}^v \prod_{j=0}^{r-1}
\lambda_{4(k_{j+1}-k_j)}(p_t^{a_t})= \prod_{t=1}^v  \prod_{j=0}^{r-1}  \sum_{0 \le l \le a_t} e^{8l (k_{j+1}-k_j) i\theta_{p_t}} \\
&= \prod_{t=1}^v \sum_{0 \le l_1, \ldots, l_r \le a_t} \exp\bigg(8i \sum_{j=0}^{r-1} l_{j+1}(k_{j+1}-k_j) \theta_{p_t} \bigg) =\sum_{\substack{\vec l_1, \ldots,
    \vec l_v \in \mathbb Z_{\ge 0}^{r} \\ \lVert \vec l_t
    \rVert_{\infty}\le a_t, t=1,\ldots,v }} \exp\bigg(8i \sum_{t=1}^v
k_{\vec l_t} \theta_{p_t} \bigg). 
\end{split}
\end{equation}

We can write $m=p_{v+1} \cdots p_w$ where $p_{v+1},\ldots,p_w$ are distinct primes $\equiv 1 \tmod 4$, which are co-prime to $n_0$. 
Let $W=\{1,\ldots, w-v\}$ and for $S \subset W$ let $\epsilon_{j,S}=1$ if $j \in S$ and $\epsilon_{j,S}=-1$ if $j \notin S$.   Recalling that $\lambda_{8k}(p)=e^{8ki\theta_p}+e^{-8ki\theta_p}$ for $p \equiv 1 \pmod 4$ we have that
\begin{equation} \label{eq:sform}
\begin{split}
s(m;\vec k)=& \sum_{\substack{[\mathbf S_{v+1}],\ldots, [\mathbf S_{w}] \in \mathcal M_r \\ k_{\mathbf S_j} \neq 0, \,  j=v+1,\ldots, w}} \lambda_{8k_{\mathbf S_{v+1}}}(p_{v+1}) \cdots \lambda_{8k_{\mathbf S_w}}(p_w) \\
=& \sum_{\substack{[\mathbf S_{v+1}],\ldots, [\mathbf S_{w}] \in \mathcal M_r \\ k_{\mathbf S_l} \neq 0, \,  l=v+1,\ldots, w}} \sum_{S \subset W} \exp\left( 8i\sum_{j=v+1}^w \epsilon_{j,S} k_{\mathbf S_j} \theta_{p_j} \right) .
\end{split}
\end{equation}

To complete the proof we combine \eqref{eq:ellform} and
\eqref{eq:sform} then use the resulting expression in the left-hand
side of \eqref{eq:maintermcancellation} and apply Lemma
\ref{lem:cancellation}, with $k_{\vec l_{t}}=2\epsilon_{j,S}k_{\mathbf S_{t}}$ for each
$t=v+1,\ldots, w$. The hypotheses of the lemma are satisfied since
$n_0m \le N^{2\delta}$. Finally note that $2^{|W|}=
N^{o(1)}$ since $m \le N^{\delta}$.
\end{proof}

\subsection{Proof of Proposition \ref{prop:corr}}

\begin{proof}[Proof of Proposition \ref{prop:corr}]
Let $V^{\star} \in \mathcal V_r$ with $d =\tmop{dim}(V)$. Applying Lemma \ref{lem:errorbd} we have
that
\begin{equation}\label{eq:errorbdlk}
\begin{split}
&\sum_{\vec k \in V^{\star} \cap \mathbb Z^{r-1}}  \ell_{\vec k}(n_0) \widehat f \bigg( \frac{\vec k}{N} \bigg)  \left(g(\vec k;Y) +O\left(\frac{\mathcal L(\vec k)}{\log \log x} \ \right) \right)\\
&\qquad \qquad \qquad \qquad =\sum_{\vec k \in V^{\star} \cap \mathbb Z^{r-1}}   \ell_{\vec k}(n_0) g(\vec k;Y)  \widehat f \bigg( \frac{\vec k}{N} \bigg)  +O\left(\frac{N^d}{\log \log x}  \right).
\end{split}
\end{equation}
Since $\widehat f$ is a Schwartz function the sum is effectively restricted to $\lVert \vec k \rVert_{\infty} \le N^{1+o(1)}$. Recall the definitions of $w(m;Y),s(m;\vec k)$ as given in \eqref{eq:wsdef}.
By Proposition \ref{prop:approx} with $y=e^{(\log N)^{3/4}}$ and \eqref{eq:fixed} we have that
\begin{equation} \label{eq:gexpand}
\begin{split}
\sum_{\vec k \in V^{\star} \cap \mathbb Z^{r-1}} \ell_{\vec k}(n_0)  g(\vec k;Y)  \widehat f \bigg( \frac{\vec k}{N} \bigg) =&\sum_{m \le y} \frac{\alpha_{V^{\star}}^{-\Omega(m)}w(m;Y)}{m}  \sum_{\vec k \in V^{\star} \cap \mathbb Z^{r-1}} \ell_{\vec k}(n_0)s(m;\vec k) \widehat f\bigg(\frac{\vec k}{N} \bigg)\\
&+O\bigg(N^{d} e^{-(\log N)^{1/12-o(1)}}\bigg),
\end{split}
\end{equation}
where we have used \eqref{eq:poissonlattice} to bound the error term
(recall $H$ is Schwartz function with $|\widehat f| \le H$.) Using Lemma \ref{lem:errorbd2}, the contribution of the terms with $2 \le m \le y$ is $\ll y N^{d-1+o(1)} \ll N^{d-1+o(1)}$.
Hence, applying this observation in \eqref{eq:gexpand} we see that
the sum on the right-hand side of \eqref{eq:errorbdlk} equals
\[
\sum_{\vec k \in V^{\star} \cap \mathbb Z^{r-1}} \widehat f \bigg( \frac{\vec
  k}{N}\bigg) \ell_{\vec k}(n_0) +O\bigg(N^d e^{-(\log N)^{1/12-o(1)}} \bigg). 
\]
This along with \eqref{eq:errorbdlk} establishes \eqref{eq:poisson},
which, on taking
\eqref{eq:kavg} into account, completes the proof of 
Proposition \ref{prop:corr}.
\end{proof}

\subsection{The variance}
Let $h: \mathbb R^{2r-1} \rightarrow \mathbb R$ be a Schwartz function with 
\begin{equation} \label{eq:hdef}
h(x_1,\ldots, x_{r-1},0,x_{r+1}, \ldots, x_{2r-1})=\widehat f(x_1,\ldots,x_{r-1})\widehat f(x_{r+1},\ldots, x_{2r-1}).
\end{equation}
Additionally, for $\vec k \in \mathbb Z^{2r-1}$ with $\vec k=(\vec k_1,0,\vec k_2)$ and $\vec k_{1},\vec k_2 \in \mathbb Z^{r-1}$ let
\begin{equation} \label{eq:ltildedef}
\widetilde \ell_{\vec k}(n_0)=\ell_{\vec k_1}(n_0)\ell_{\vec k_2}(n_0).
\end{equation}

\begin{prop}  \label{prop:var}
Let $A>0$ be fixed.
Suppose that $1 \le M \le A \log \log x$.
We have that
\begin{equation} \label{eq:var}
\frac{1}{\# \mathcal N_{M,n_0}(x)} \sum_{n \in \mathcal N_{M,n_0}(x)} R_{r,n_0}(F_N;n)^2=\frac{1}{N^{2r}} \sum_{\substack{\vec k \in \mathbb Z^{2r-1} \\ k_r=0 }} (2\alpha(\vec k))^M \widetilde \ell_{\vec k}(n_0)h\bigg( \frac{\vec k}{N}\bigg)+O\left(\frac{1}{\log \log x}+\frac{1}{M^{10}} \right),
\end{equation}
where the implied constant depends at most on $f,r,n_0$ and $A$.
\end{prop}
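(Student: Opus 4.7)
The plan is to mirror the proof of Proposition~\ref{prop:corr} almost verbatim, with $r$ replaced by $2r$ and an added linear constraint $k_r=0$. Using \eqref{eq:rcorrdef-intro} with $n$ replaced by $n_0 n$, and using the multiplicativity of $\lambda_{4k}$ on coprime arguments together with the definition \eqref{eq:elldef}, we first factor out the $n_0$-dependence:
\[
R_{r,n_0}(n;F_N) = \frac{1}{N^r}\sum_{\vec k\in\mathbb Z^{r-1}} \widehat f\!\left(\tfrac{\vec k}{N}\right) \ell_{\vec k}(n_0) \prod_{j=0}^{r-1}\lambda_{4(k_{j+1}-k_j)}(n).
\]
Squaring, and identifying the two frequency vectors $\vec k_1,\vec k_2\in\mathbb Z^{r-1}$ with a single vector $\vec k=(\vec k_1,0,\vec k_2)\in\mathbb Z^{2r-1}$ satisfying $k_r=0$, the definition \eqref{eq:hdef} of $h$ yields $\widehat f(\vec k_1/N)\widehat f(\vec k_2/N) = h(\vec k/N)$, and with the convention $k_0=k_r=k_{2r}=0$, the product of the $2r$ lambda factors amalgamates into $\prod_{j=0}^{2r-1}\lambda_{4(k_{j+1}-k_j)}(n)$. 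Combined with \eqref{eq:ltildedef}, this gives
\[
R_{r,n_0}(n;F_N)^2 = \frac{1}{N^{2r}}\sum_{\substack{\vec k\in\mathbb Z^{2r-1}\\ k_r=0}} h\!\left(\tfrac{\vec k}{N}\right) \widetilde\ell_{\vec k}(n_0) \prod_{j=0}^{2r-1}\lambda_{4(k_{j+1}-k_j)}(n).
\]

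Next, I would average over $n\in\mathcal N_{M,n_0}(x)$. The rapid decay of $h$ truncates the sum to $\lVert\vec k\rVert_\infty\le N^{1+o(1)}$ with negligible error, after which Proposition~\ref{prop:LSD}, applied with $r$ replaced by $2r$ (its proof is completely insensitive to one additional frequency being fixed to zero), transforms the inner average into
\[
(2\alpha(\vec k))^M\Bigl(g(\vec k;Y)+O\bigl(\mathcal L(\vec k)/\log\log x\bigr)\Bigr),
\]
where $\alpha$, $g$, $\mathcal L$ are now defined via $\mathcal M_{2r}$ as in \eqref{eq:alphadef} and \eqref{eq:glkdef}. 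As in Section~\ref{sec:decomp}, I would partition the index set $\{\vec k\in\mathbb Z^{2r-1}:k_r=0\}$ according to the constancy loci of $\alpha$, by intersecting the elements of $\mathcal V_{2r}$ with the hyperplane $\{k_r=0\}$. Inside each piece the admissible vectors form a lattice of some dimension $d$, and $(2\alpha_{V^\star})^M\le 2^{(2r-d)M}$ by Lemma~\ref{lem:alpha-bound} (whose geometric content is unaffected by the parameter shift).

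For the error contribution of $\mathcal L(\vec k)/\log\log x$, I would invoke the direct analogue of Lemma~\ref{lem:errorbd}: its proof uses only the Dirichlet polynomial expansion \eqref{eq:lzform} together with the cancellation Lemma~\ref{lem:cancellation}, both of which apply within the restricted lattice without modification, yielding the $O(1/\log\log x)$ error. For the main term, Proposition~\ref{prop:approx} (with $y=e^{(\log N)^{3/4}}$) expresses $g(\vec k;Y)$ as a short Dirichlet polynomial: the $m=1$ term reproduces exactly the claimed right-hand side of \eqref{eq:var}, while the contribution of each $2\le m\le y$ with $(m,2n_0)=1$ is treated by the direct analogue of Lemma~\ref{lem:errorbd2}. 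Here the factorization $\widetilde\ell_{\vec k}(n_0)=\ell_{\vec k_1}(n_0)\ell_{\vec k_2}(n_0)$ provides an expansion of exactly the form \eqref{eq:ellform} in the $\theta_{p_t}$'s (with up to $2v$ primes instead of $v$), so the repulsion input from Lemma~\ref{lem:repulsion} and the Poisson summation in Lemma~\ref{lem:cancellation} go through verbatim.

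The main obstacle is bookkeeping rather than mathematical content: one must verify that each geometric, analytic, and cancellation step of Proposition~\ref{prop:corr} survives the substitution $r\mapsto 2r$ in the presence of the single affine constraint $k_r=0$. The lattice $\{\vec k\in V\cap\mathbb Z^{2r-1}:k_r=0\}$ is still a lattice (of dimension at most $d$), the bilinear form $\langle\cdot,\cdot\rangle$ and the hypercube-intersection bound of Lemma~\ref{lem:hypercube-intersection} behave identically, and Lemma~\ref{lem:cancellation}'s Poisson-summation argument is oblivious to the ambient dimension. Thus no new cancellation phenomena arise, and combining the main term contribution with the total error $O(1/\log\log x+1/M^{10})$ — the latter from the $M\mathcal L(\vec k)/X^2$ factor that led to the $1/M^{10}$ in \eqref{eq:lsd2} — establishes \eqref{eq:var}.
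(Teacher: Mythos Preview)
Your proposal is correct and follows essentially the same route as the paper. The one refinement worth noting: rather than intersecting each $V^{\star}\in\mathcal V_{2r}$ with the hyperplane $\{k_r=0\}$ and then re-verifying that the lattice lemmas survive this intersection, the paper observes that $k_r=k_{\mu}$ for $\mu=[\{0,\ldots,r-1\}]\in\mathcal M_{2r}$, so that for $V^{\star}=\ker(M_S)^{\star}$ one has $V^{\star}\cap\{k_r=0\}=\ker(M_{\widetilde S})^{\star}$ with $\widetilde S=S\cup\{\mu\}$ --- which is itself an element of $\mathcal V_{2r}$. Consequently Lemmas~\ref{lem:alpha-bound}, \ref{lem:cancellation}, \ref{lem:errorbd}, \ref{lem:errorbd2} apply verbatim with no re-verification. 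Your route also works (the dimension bookkeeping goes through, since $\dim(V\cap\{k_r=0\})\le\dim V$ and Lemma~\ref{lem:alpha-bound} bounds $\alpha$ via $\dim V$), but the paper's observation makes the reduction to the machinery of Proposition~\ref{prop:corr} completely formal.

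Two minor slips, neither fatal: the expansion of $\widetilde\ell_{\vec k}(n_0)$ involves the \emph{same} $v$ primes $p_1,\ldots,p_v\mid n_0$ (not $2v$), each now carrying a combined exponent from the two factors $\ell_{\vec k_1}(n_0)\ell_{\vec k_2}(n_0)$; and the $1/M^{10}$ error comes from bounding $e^{-(\log N)^{1/12-o(1)}}\ll 1/M^{10}$ after \eqref{eq:gexpand}, not from the $M\mathcal L(\vec k)/X^2$ term in \eqref{eq:lsd2} (that term feeds into the $1/\log\log x$ error via $M\le A\log\log x$).
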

 
\begin{proof}
  Using \eqref{eq:Fform} (see also (\ref{eq:elldef}) and the first
  line of (\ref{eq:kavg})) the left-hand side of \eqref{eq:var} equals
\[
\begin{split}
=&\frac{1}{N^{2r}}\sum_{\vec k, \vec l \in \mathbb Z^{r-1}} \widehat f\bigg(\frac{\vec k}{N} \bigg) \widehat f\bigg( \frac{\vec l}{N}\bigg) \ell_{\vec k}(n_0)\ell_{\vec \ell}(n_0) \frac{1}{\# \mathcal N_{M,n_0}(x)} \sum_{n \in \mathcal N_{M,n_0}(x)} \prod_{j=0}^{r-1} \lambda_{4(k_{j+1}-k_j)}(n)\lambda_{4(l_{j+1}-l_j)}(n) \\
=&\frac{1}{N^{2r}}\sum_{\substack{\vec k \in \mathbb Z^{2r-1} \\ k_r=0}} h\bigg( \frac{\vec k}{N}\bigg) \widetilde \ell_{\vec k}(n_0) \frac{1}{\# \mathcal N_{M,n_0}(x)} \sum_{n \in \mathcal N_{M,n_0}(x)} \prod_{j=0}^{2r-1} \lambda_{4(k_{j+1}-k_j)}(n),
\end{split}
\]
where we have also used the convention $k_0=k_{2r}=0$ in the last line.
As before, recall that the contribution of the terms with $\lVert \vec k \rVert_{\infty} \ge N^{1+o(1)}$ is $\ll N^{-B}$ for any $B>0$, which is negligible so we can add or remove these terms as we wish. 

We now use an argument similar to that given in the proof of Proposition \ref{prop:corr}. As before we apply Proposition \ref{prop:LSD} to the inner sum on the right-hand side above (using the previous observation to restrict to $\lVert \vec k \rVert_{\infty} \le N^{1+o(1)}$.) The next step differs slightly; we apply our decomposition \eqref{eq:decomposition} to $\mathbb R^{2r-1}$ (as opposed to $\mathbb R^{r-1}$ previously) and we have the additional constraint $k_r=0$.
This gives that the right-hand side of the above equation equals
\begin{equation} \label{eq:varexpand}
\begin{split}
&\frac{1}{N^{2r}} \sum_{\substack{\vec k \in \mathbb Z^{2r-1} \\ k_r=0}} h\bigg( \frac{\vec k}{N}\bigg) \widetilde \ell_{\vec k}(n_0) (2\alpha(\vec k))^M\left(g(\vec k;Y)+O\left(\frac{\mathcal L(\vec k)}{\log \log x} \right) \right)+O(N^{-10})\\
&= \frac{1}{N^{2r}} \sum_{V^{\star} \in \mathcal V_{2r}}  \sum_{\substack{\vec k \in V^{\star} \cap \mathbb Z^{2r-1} \\ k_r=0}} h\bigg( \frac{\vec k}{N}\bigg) \widetilde \ell_{\vec k}(n_0) (2\alpha(\vec k))^M  \left(g(\vec k;Y)+O\left(\frac{\mathcal L(\vec k)}{\log \log x} \right) \right)+O(N^{-10}),\\
\end{split}
\end{equation}
where in the first line we added back in the terms with $\lVert \vec k \lVert_{\infty} \ge N^{1+o(1)}$ after applying Proposition \ref{prop:LSD}. Similar to before, our strategy is to evaluate the inner sum for each $V^{\star} \in \mathcal V_{2r}$, however we need to account for the condition $k_r=0$ (and we are also working with higher dimensional lattices.) We next note that the condition $k_{r}=0$ can be imposed by
adding an extra linear relation to $S$.
Namely, let $\mu=[\{0,\ldots,r-1\} ]\in \mathcal M_{2r}$ and observe
that $k_{\mu}=\sum_{j=0}^{r-1} (k_{j+1}-k_j)=k_r$. For $S \subset
\mathcal M_{2r}$ let $\widetilde S=S\cup \{ \mu
\}$. Recall that for 
each $V^{\star} \in \mathcal V_{2r}$ there exists $S \subset \mathcal
M_{2r}$ 
such that $V^{\star}=\tmop{ker}(M_S)^{\star}$.  Set $\widetilde V^{\star}=\tmop{ker}(M_{\widetilde S})^{\star}$, $d=\tmop{dim}(\widetilde V)$, and note that $d \le 2r-1$. Hence the right-hand side of \eqref{eq:varexpand} equals
\[
\frac{1}{N^{2r}} \sum_{V^{\star} \in \mathcal V_{2r}}  (2\alpha_{\widetilde V^{\star}})^M \sum_{\substack{\vec k \in \widetilde V^{\star} \cap \mathbb Z^{2r-1} }} \widetilde \ell_{\vec k}(n_0) h\bigg( \frac{\vec k}{N}\bigg) \left(g(\vec k;Y)+O\left(\frac{\mathcal L(\vec k)}{\log \log x} \right) \right)+O(N^{-10}).
\]
Since $\widetilde V^{\star} \in \mathcal V_{2r}$ arguing as in the proof of Proposition \ref{prop:corr} gives that
\[
\begin{split}
&\sum_{\vec k \in \widetilde V^{\star} \cap \mathbb Z^{2r-1}} \widetilde \ell_{\vec k}(n_0) h\bigg(\frac{\vec k}{N} \bigg)  \bigg( g(\vec k; Y)+O\left( \frac{\mathcal L(\vec k)}{\log \log x} \right)\bigg)\\
&\qquad \qquad \qquad =\sum_{\vec k \in \widetilde V^{\star} \cap \mathbb Z^{2r-1}} \widetilde \ell_{\vec k}(n_0) h\bigg(\frac{\vec k}{N} \bigg)+O\left( \frac{N^d}{\log \log x}+\frac{N^d}{M^{10}} \right),
\end{split}
\]
which establishes an analogue of \eqref{eq:poisson}. To obtain the above estimate we have also used an analogue of Lemma \ref{lem:errorbd2} where $\widetilde \ell_{\vec k}(n_0)$ replaces $\ell_{\vec k}(n_0)$ in the left-hand side of \eqref{eq:maintermcancellation}. Since this result follows from a completely analogous argument to the one used to establish Lemma \ref{lem:errorbd2} we will omit the details.
Recalling Lemma \ref{lem:alpha-bound}, reversing our decomposition completes the proof.\end{proof}



\section{Proof of Theorem \ref{thm:corr2}: Matching the random model with the\\ $r$-correlation}

The goal of this section is to express our formulas for the
$r$-correlation of lattice points in terms of the random model from
Section \ref{sec:random-model} and complete the proof of Theorem
\ref{thm:corr2}. Our approach is to compute the smoothed
$r$-correlation of the random model $\mathcal R_r(F_N)$ as given in
\eqref{eq:randomcorr} by following a similar strategy to the one used
to compute the $r$-correlation for the angles of lattice points. Using
independence of the random variables $(\vartheta_l)_{l=1}^M$ we will
quickly arrive at the same expression that appears in Proposition
\ref{prop:corr}.   

\begin{proof}[Proof of Theorem \ref{thm:corr2}]
Recall the definition of $x_J$ as given in \eqref{eq:xJ-def}. Let us define the random variable
\begin{equation}\label{eq:random-coeff}
\lambda_{4k}=\sum_{J \subset \{1, \ldots, M\}} e^{4ki x_J}=\prod_{l=1}^M(1+e^{4ki \vartheta_{l}}
).
\end{equation}


We will first establish \eqref{eq:randomeq1}.  Using \eqref{eq:Fform},
and with $\ell_{\vec k}(n_0)$ as in \eqref{eq:elldef}, we have that
\begin{equation} \label{eq:randompoisson}
\mathcal R_{r,n_0}(F_N)=\frac{1}{N^r} \sum_{\vec k \in \mathbb Z^{r-1}} \ell_{\vec k}(n_0) \widehat f \bigg( \frac{\vec k}{N}\bigg) \prod_{j=0}^{r-1} \lambda_{4(k_{j+1}-k_j)},
\end{equation}
where we recall that by convention $k_0=k_r=0$. Using \eqref{eq:random-coeff} as well as that $(\vartheta_l)_{l=1}^M$ are independent and arguing as in \eqref{eq:combin} we have
\begin{equation} \label{eq:expectation-comp}
\begin{split}
\mathbb E \bigg(\prod_{j=0}^{r-1} \lambda_{4(k_{j+1}-k_j)} \bigg)=\prod_{l=1}^M \mathbb E\bigg( \prod_{j=0}^{r-1} (1+ e^{i4(k_{j+1}-k_j)\vartheta_l} \bigg) \\
=\prod_{l=1}^M \sum_{\mathbf S \subset \{0,1,\ldots, r-1\}} \mathbb E\big( e^{i4k_{\mathbf S}\vartheta_l}\big) =(2\alpha(\vec k))^{M}.
\end{split}
\end{equation}
Hence, combining this with \eqref{eq:randompoisson} gives
\[
\mathbb E\big(\mathcal R_{r,n_0}(F_N) \big)=\frac{1}{N^r} \sum_{\vec k \in \mathbb Z^{r-1}}  \widehat f \bigg( \frac{\vec k}{N}\bigg) \ell_{\vec k}(n_0) (2\alpha(\vec k))^M.
\]
Using this along with \eqref{eq:correxp} establishes \eqref{eq:randomeq1}.

It remains to prove \eqref{eq:randomeq2}. The argument proceeds similarly. Applying \eqref{eq:Fform} and arguing as in the first step of the proof of Proposition \ref{prop:var} yields
\begin{equation} \label{eq:randompoisson-var}
\mathcal R_{r,n_0}(F_N)^2=\frac{1}{N^{2r}} \sum_{\substack{\vec k \in \mathbb Z^{2r-1} \\ k_r=0}} h \bigg( \frac{\vec k}{N}\bigg) \widetilde \ell_{\vec k}(n_0)  \prod_{j=0}^{2r-1} \lambda_{4(k_{j+1}-k_j)},
\end{equation}
where $h$ is as in \eqref{eq:hdef}, $\widetilde \ell_{\vec k}(n_0)$ is as in \eqref{eq:ltildedef}, and we use the convention $k_0=k_{2r}=0$. Hence, it follows from \eqref{eq:expectation-comp} (which we use with $2r$ in place of $r$) that
\[
\mathbb E \big(\mathcal R_{r,n_0}(F_N)^2 \big)=\frac{1}{N^{2r}} \sum_{\substack{\vec k \in \mathbb Z^{2r-1} \\ k_r=0}} h \bigg( \frac{\vec k}{N}\bigg) \widetilde \ell_{\vec k}(n_0) (2\alpha(\vec k))^M.
\]
Therefore, using this together with \eqref{eq:var} establishes \eqref{eq:randomeq2}, which completes the proof.
\end{proof}


\section{Poisson correlations for the random model}
\label{sec:poiss-corr-hybr}
We first treat the square free case separately as it is notationally
simpler, and then use it to deduce the case for general $n$.
\subsection{The square free case}
\label{subsec:square free-cases-random-model}

We begin with the case of square free $n$, i.e.,
$n_{0} \in \{1,2\}$. 
For simpler notation, let $N = N_{n} = r(n)$.  In the random model we
can directly handle summing over distinct angles, and will work with the following setup for
the ``standard'' $r$-level correlation,
which we define as the random
variable
\begin{equation*}
  \mathcal{R}_{r}^*(\FN)
  :=
  \frac{1}{N}
\sumstar_{J_{1}, \ldots, J_{r} \subset \{1, \ldots, M\} }
\FN(x_{J_{1}}-x_{J_2}, \ldots, x_{J_{r-1}}-x_{J_{r}})
\end{equation*}
where $\sum^{*}$ indicates summing over {\em distinct} subsets $J_{1},
\ldots, J_{r}$. Throughout this section we assume $f$ has compact support.

\subsubsection{The pair correlation}
To illustrate ideas we begin by determining the pair correlation.
Our approach is to compute the expected value, and then, via a
variance bound, show that fluctuations around the mean are small.

{\em The expectation of $\mathcal{R}_2^*(\FN)$:}
Using linearity of expectations,  the expected value of
$\mathcal{R}_{2}^*(\FN)$ is given by
$$
\mathbb{E} 
\left(
  \mathcal{R}_{2}^*(\FN)
\right)  
=  
\frac{1}{N}
\sumstar_{J_{1},J_{2} \subset \{1, \ldots, M\}}
\mathbb{E} 
\left(
\FN(x_{J_{1}}-x_{J_{2}})
\right).
$$
Since $\sum^{*}$ indicates summing over distinct subsets
$J_{1},J_{2}$, the symmetric difference
$J_{1} \Delta J_{2}$ is nontrivial and $x_{J_{1}}-x_{J_{2}}$ is a sum
(with certain choices of signs) of $|J_{1} \triangle J_{2}| \geq 1$
{\em independent} uniform random variables on the torus (here and in
what follows $J_{1} \triangle J_{2}$ denotes the symmetric difference
between the sets $J_{1}$ and $J_{2}$.)
In particular, for distinct subsets $J_{1},J_{2}$ we have by a direct
computation that 
$\mathbb{E} \left( \FN(x_{J_{1}}-x_{J_{2}}) \right) = \hat{f}(0)/N$,
and the total contribution equals
$$
\frac{1}{N} \cdot N(N-1) \frac{\hat{f}(0)}{N} =
\hat{f}(0)(1-1/N) = 
\hat{f}(0) + O_{f}(1/N).
$$

{\em Bounding the variance of $\mathcal{R}_2(\FN)$:}
We next show that the fluctuations around the mean, with large
probability, are small in comparison with the mean, namely that
$$
\E( \mathcal{R}_{2}^*(F_N)^{2} ) -  \E(\mathcal{R}_{2}^*(F_N))^{2} = o(1)
$$
as $M$ tends to infinity. By linearity of expectations, we find that 
\begin{equation}
  \label{eq:pair-variance}
\E( \mathcal{R}_{2}^*(F_N)^{2} )
=
\frac{1}{N^{2}}
\sumstar_{J_{1},J_{2} \subset \{1, \ldots, M\}} \,
\sumstar_{J_{3},J_{4} \subset \{1, \ldots, M\}}
\mathbb{E} 
\left(
\FN (x_{J_{1}}-x_{J_{2}})
\FN  (x_{J_{3}}-x_{J_{4}})
\right).
\end{equation}
Now, for a generic choice of subsets (i.e., for $N^{4}(1+o(1))$
choices), the two components of
$(x_{J_{1}}-x_{J_{2}},x_{J_{3}}-x_{J_{4}})$ will contain at least one
pair of independent random variables, and if this is so  we have
$$
\mathbb{E} 
\left(
\FN (x_{J_{1}}-x_{J_{2}})
\FN (x_{J_{3}}-x_{J_{4}})
\right)
= \frac{\hat{f}(0)^{2}}{N^{2}}
$$
and hence the main term of (\ref{eq:pair-variance}) equals
$$
\frac{N^{4}(1+o(1))}{N^{2}} \cdot \frac{\hat{f}(0)^{2}}{N^{2}}
=
\hat{f}(0)^{2}
+ o_{f}(1).
$$

\begin{rem}
  A delicate issue is that if we would include pairs of non-distinct
  subsets --- we then get ``degenerate pairings'' which give a
  contribution of the same size as the main term. Namely, if we take
  $J_{1}=J_{2}$ and $J_{3}=J_{4}$ (there are $N^{2}$ such choices), we
  find, on noting that
  $\E( \FN(x_{J_{1}}-x_{J_{2}}) \cdot \FN(x_{J_{3}}-x_{J_{4}})) =
  f(0)^{2}$, that the contribution from these terms equals
$$
\frac{1}{N^{2}} \cdot N^{2}f(0)^{2} = f(0)^{2}.
$$
In fact, the same holds for the expectation! However, this should not
be a surprise as allowing for pairs of points to be equal should give
a secondary main term of the form $f(0)$ in addition to $\widehat{f}(0)$.
\end{rem}

To bound the degenerate solutions we argue as follows: each choice of
subsets $J_{1},J_{2},J_{3},J_{4}$ gives a group homomorphism
$$
\T^{M} \to \T \times \T, \quad
(\vartheta_1, \ldots, \vartheta_M) \to (
\sum_{j \in J_{1}} \vartheta_{j} - \sum_{j \in J_{2}} \vartheta_{j},
\sum_{j \in J_{3}} \vartheta_{j} - \sum_{j \in J_{4}} \vartheta_{j}
),
$$
which, on letting $\vec \vartheta = (\vartheta_1, \ldots, \vartheta_M)$, can be
written as 
$$
\vec \vartheta \to ( (\vec v_{1}-\vec v_{2}) \cdot \vec \vartheta, (\vec v_{3}-\vec v_{4}) \cdot \vec \vartheta)
$$
where $\vec v_{l} = \sum_{i \in J_{l}} \vec e_{i}$ and $\vec
e_{1},\ldots,\vec e_{M}$ 
denotes the standard basis for $\R^{M}$. (Note that the torus map is
given by an integer entry matrix in
$\operatorname{Mat}_{2,M}(\Z)$
whose rows are given by $(\vec v_1-\vec v_2)^{t},(\vec v_3- \vec v_4)^{t}$,
and if this matrix has rank two the distribution of the image inside
$\T^2$ is uniform; for a formal argument see
Lemma~\ref{lem:uniform-push-forward}.) We note that if the $4$-tuple
of subsets $(J_{1}, J_{2},J_{3},J_{4})$ is ``generic'', then the rank of this
map is $2$; when this does not hold we call the $4$-tuple of subsets
$(J_{1}, J_{2},J_{3},J_{4})$ ``degenerate''.  If the rank is zero, we
must have $\vec v_{1}=\vec v_{2}$ and $\vec v_{3}= \vec v_{4}$, hence
$J_{1}=J_{2}$ and $J_{3}=J_{4}$ and thus there is no contribution, and
similarly there is no contribution if the rank is one due to either
$\vec v_{1}-\vec v_{2}=0$ or $\vec v_{3}-\vec v_{4}=0$. The remaining
rank one case is that there exists nonzero scalars $\alpha,\beta$ such
that
$$
\alpha(\vec v_{1}-\vec v_{2}) = \beta(\vec v_{3}-\vec v_{4})
$$
where, we may without loss of generality assume that $\alpha,\beta$
are coprime integers, and say $\alpha >0$. We note that $\alpha > 1$
is impossible since the components of the vector $\alpha(\vec v_{1}-\vec v_{2})$
are in $\{-\alpha,0,\alpha \}$, whereas the components of
$\beta(\vec v_{3}-\vec v_{4})$ are in $\{-\beta,0,\beta \}$ (note that
at least 
one component in each of the two vector differences must be nonzero.)
The remaining case is that $\alpha = 1$ and $\beta = \pm 1$; say
$\beta = 1$ (the other case follows similarly.)  In this case, for
$\vec v_{1},\vec v_{2}$ fixed, we find that
$J_{3}\setminus J_{4} = J_{1} \setminus J_{2}$, as well as
$J_{4}\setminus J_{3} = J_{2} \setminus J_{1}$, and the only choice
left is specifying the intersection $J_{3} \cap J_{4}$, which clearly
must be contained in the {\em complement} of the symmetric difference
$J_{1} \triangle J_{2}$. We next show that the cardinality of the
symmetric difference, for ``generic'' choices of
$J_{1},J_{2}$, is
$M\cdot (1/2 + o(1))$.
\begin{lem}
  \label{lem:law-large-numbers}
For $N^{2}(1+o(1))$ choices of subsets $J_{1},J_{2} \subset \{ 1,\ldots, M \}$
we have $|J_{1} \triangle J_{2}| = M \cdot (1/2 + o(1))$, as $M \to \infty$.
\end{lem}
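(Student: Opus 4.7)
The plan is to recognize that choosing a uniformly random pair of subsets $(J_1, J_2)$ amounts to running $M$ independent Bernoulli trials and then apply a standard concentration argument.

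First I would reparametrize: sampling $J_1, J_2 \subset \{1,\ldots,M\}$ uniformly (i.e.\ selecting one of the $N^2 = 4^M$ equally likely pairs) is equivalent to, for each $i \in \{1,\ldots, M\}$, independently drawing the pair $(\mathbf{1}_{i \in J_1}, \mathbf{1}_{i \in J_2}) \in \{0,1\}^2$ uniformly. Set $X_i = \mathbf{1}_{i \in J_1 \triangle J_2}$; then $X_i = 1$ precisely when exactly one of the two bits is $1$, which happens with probability $1/2$, and the $X_i$ are independent. Consequently $|J_1 \triangle J_2| = \sum_{i=1}^M X_i$ is a $\mathrm{Binomial}(M, 1/2)$ random variable with mean $M/2$ and variance $M/4$.

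Next I would apply Chebyshev's inequality. Fix any sequence $\varepsilon_M \to 0$ with $\varepsilon_M^2 M \to \infty$ (for instance $\varepsilon_M = M^{-1/3}$). Then
\[
\Pr\bigl(\bigl||J_1 \triangle J_2| - M/2\bigr| > \varepsilon_M M\bigr) \le \frac{M/4}{\varepsilon_M^2 M^2} = \frac{1}{4\varepsilon_M^2 M} \to 0.
\]
Translating this probability statement into counting, the number of pairs $(J_1, J_2)$ with $\bigl||J_1\triangle J_2| - M/2\bigr| > \varepsilon_M M$ is $o(N^2)$, so the complementary set has size $N^2(1+o(1))$, and on this set $|J_1 \triangle J_2| = M(1/2 + o(1))$ as required.

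There is no real obstacle here: the lemma is a law-of-large-numbers statement dressed up as a counting bound, and the only delicate point is to choose $\varepsilon_M$ decaying slowly enough that the Chebyshev bound still gives $o(1)$ while fast enough that $M(1/2+\varepsilon_M) = M(1/2+o(1))$. One could equally well invoke the Chernoff bound for binomial tails to obtain an exponentially small exceptional set, but Chebyshev already suffices for the stated conclusion.
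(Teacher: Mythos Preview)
Your proposal is correct and essentially identical to the paper's proof: both recognize that the indicators $X_i = \mathbf{1}_{i \in J_1 \triangle J_2}$ are independent Bernoulli$(1/2)$ variables and then invoke the weak law of large numbers to conclude. The paper simply cites the weak law, whereas you make the Chebyshev step explicit, but there is no substantive difference.
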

\begin{proof}
  We use the following simple probabilistic argument: first note that the number of
  pairs of subsets with the desired property is $N^{2}$ times the
  probability of randomly selected subsets $J_1,J_2$ having the same
  property, where the two subsets are selected independently and the
  probability for each configuration is $1/2^{M}$. Or equivalently,
  each element $i \in \{1, \ldots, M\}$ is independently selected to
  be in $J_{1}$ with probability $1/2$, and similarly for
  $i \in J_{2}$. In particular, for a fixed index $i$, each of the
  four possible containment patterns w.r.t. $J_{1},J_{2}$ occurs with
  probability $1/4$.  Hence an index $i \in \{1, \ldots, M\}$ is
  contained in $J_{1} \triangle J_{2}$ with probability $1/2$ (as this
  occurs for two out of the four possible containment patterns.)
  Since the events for different indices $i$ are independent, by the
  weak law of large numbers we find that
  $|J_{1} \triangle J_{2}|/M = 1/2 + o(1)$ holds with probability
  $1+o(1)$ as $M$ grows, and the result follows.
\end{proof}
The Lemma immediately gives that for
all $N^{2}(1+o(1))$ ``generic'' choices of  $J_{1},J_{2}$ (also distinct), the
symmetric difference has size roughly of order $M/2$, hence leaving
$2^{M-|J_{1} \triangle J_{2}|} = o(N)$ possibilities to choose the
intersection $J_{3} \cap J_{4}$. The total contribution is thus
$$
\ll_{f}
\frac{1}{N^{2}} (N^{2} \cdot o_{f}(N))   \frac{1 }{N} = o_{f}(1)
$$
(here we have used that
$ \E (|\FN(x_{J_1}-x_{J_2}) \cdot \FN(x_{J_3}-x_{J_4})| )
=
\E (|\FN(x_{J_1}-x_{J_2})^{2}| 
\ll_{f} 1/N$; it is
crucial to use the fact that the rank is one.)

Finally, for the $o(N^{2})$ non-generic choices of $J_{1},J_{2}$, there
can be at most $N=2^{M}$ possibilities for the intersection $J_{3}
\cap J_{4}$; again the total contribution is
$$
\ll_{f} o(N^{2}) \cdot N \cdot \frac{ 1}{N^{2}} \cdot \frac{1}{N}
=
o_{f}(1).
$$

\subsubsection{Higher level correlations}
\label{sec:high-level-corr}
The general case is more involved combinatorially, but the key idea is
still that the expectation of the $r$-level correlation for
``generic'' $x_{J_1}, \ldots, x_{J_r}$ (i.e., generic choices of subsets
$J_{1},\ldots, J_{r}$)
dominates, and then to bound degenerate tuples.

{\em The expectation of $\mathcal{R}_{r}^*$}:
We begin with a  result regarding uniform distribution on tori.
\begin{lem}
\label{lem:uniform-push-forward}
Let $A \in \tmop{GL}_{n}(\Q) \cap \tmop{Mat}_{n \times n}(\Z)$, and let
$X = (X_{1},\ldots,X_{n})$ denote a uniformly distributed random
variable on $\T^n$.  Then $Y = AX$ is also uniform.  More generally,
if $n \ge m$ and $A \in \tmop{Mat}_{m \times n}(\Z)$ is an
$m \times n$-matrix with rank $m$, and $X$ is a uniformly distributed
random variable on $\T^n$, then $Y= AX$ is uniformly distributed on
$\T^m$.
\end{lem}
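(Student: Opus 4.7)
The plan is to handle the rectangular case directly, which subsumes the square case (since $A \in \tmop{GL}_n(\Q) \cap \tmop{Mat}_{n\times n}(\Z)$ forces $\tmop{rank}(A)=n$, so it fits the hypotheses with $m=n$). My strategy is to identify the distribution of $Y = AX$ on $\T^m$ by its Fourier coefficients: since Haar measure on $\T^m$ is uniquely characterized by having all nontrivial Fourier coefficients equal to zero, it suffices to show that for every $\vec k \in \Z^m$,
\[
\E\bigl(e^{2\pi i \langle \vec k,\, Y \rangle}\bigr) = \mathbf{1}_{\vec k = \vec 0}.
\]
Here I use $\T = \R/\Z$ for the computation; the statement for the paper's convention $\T = \R/((\pi/2)\Z)$ is then immediate by rescaling, as uniform distribution is preserved under dilations of the torus.

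First I would exploit that $A$ has integer entries, so that the pairing $\langle \vec k, A\vec X\rangle \equiv \langle A^{\mathsf T}\vec k,\, \vec X\rangle \pmod{1}$ is a well defined function on $\T^n$. Writing $\vec X = (X_1, \ldots, X_n)$ and using independence of the coordinates, this gives
\[
\E\bigl(e^{2\pi i \langle \vec k,\, A\vec X\rangle}\bigr)
= \E\bigl(e^{2\pi i \langle A^{\mathsf T}\vec k,\, \vec X\rangle}\bigr)
= \prod_{j=1}^n \E\bigl(e^{2\pi i (A^{\mathsf T}\vec k)_j X_j}\bigr).
\]
Each individual factor equals $1$ if $(A^{\mathsf T}\vec k)_j = 0$ and $0$ otherwise (as $X_j$ is uniform on $\T$), and so the whole product equals $\mathbf{1}_{A^{\mathsf T}\vec k = \vec 0}$ with equality understood in $\Z^n$.

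The remaining step is to observe that the rank hypothesis translates precisely into injectivity of $A^{\mathsf T}\colon \Z^m \to \Z^n$: since $A$ has rank $m$ over $\Q$, the transpose $A^{\mathsf T}\colon \Q^m \to \Q^n$ has kernel of dimension $m-m=0$ by rank-nullity, and this injectivity restricts to the subgroup $\Z^m \subset \Q^m$. Hence $A^{\mathsf T}\vec k = \vec 0$ forces $\vec k = \vec 0$, completing the Fourier characterization. I do not anticipate any serious obstacle: the argument is a routine application of character theory on the torus, with the only substantive point being the observation that the rank condition is exactly what ensures nontrivial characters of $\T^m$ pull back to nontrivial characters of $\T^n$.
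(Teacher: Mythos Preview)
Your argument is correct and complete. It differs genuinely from the paper's proof: the paper proceeds by a structural decomposition, writing $A = B_1 D B_2$ via Smith normal form with $B_1, B_2 \in \tmop{SL}_n(\Z)$ and $D$ diagonal, then handling the $\tmop{SL}_n(\Z)$ factors by volume preservation and the diagonal factor coordinatewise; the rectangular case is deduced by permuting columns, splitting $\T^n = \T^m \times \T^{n-m}$, and conditioning on the second factor. Your route via Fourier coefficients is more direct and treats the square and rectangular cases uniformly: the single observation that $\operatorname{rank}(A)=m$ forces $A^{\mathsf T}$ to be injective on $\Z^m$ replaces both the Smith normal form and the conditioning step. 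The paper's approach has the minor advantage of being self-contained at the level of measures (no appeal to Fourier uniqueness), while yours is shorter and makes transparent exactly which algebraic property of $A$ is being used.
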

\begin{proof}
  For $n=1$, i.e., $Y = Y_{1} = a X_{1}$, for nonzero $a \in \Z$ the
  result is clear (just consider the preimage of a small interval; it
  will consist of $|a|$ copies of intervals whose lengths are scaled
  by $1/|a|$.)  For $n>1$, $A$ can be decomposed as
  $A = B_{1} D B_{2}$ where $B_{1},B_{2} \in \tmop{SL}_{n}(\Z)$ and $D$ is a
  diagonal matrix with integer entries (we can take $D$ to be the
  Smith normal form $A$, cf.  \cite[Theorem
  2.4.12]{cohen-comp-alg-nt-book}).  It is thus enough to prove the
  statement for 
  $A \in \tmop{SL}_{n}(\Z)$ or $A=D$.  The former is clear as multiplication
  by $A$ does not change the measure (the determinant of the Jacobian
  equals $\pm 1$.)  The case $A=D$ and $n=1$ is already done, and
  writing $dx=dx_{1} \cdots d x_{n}$ the general case follows by
  change of measure one component at a time.

  For $A \in \tmop{Mat}_{m \times n}(\Z)$ the argument is similar.  After
  permuting columns we may assume that the $m$ first columns of $A$
  are linearly independent.  Decomposing the uniform measure on
  $\T^{n}$ as a product of two uniform measures $\mu_{1} \times \mu_2$
  on $\T^{m} \times \T^{n-m}$, the result follows by conditioning on
  the $\T^{n-m}$-component, using the first part applied to
  $\T^{m}$-component, together with the uniform measure being
  translation invariant.
\end{proof}

Before proceeding we introduce some further notation.
As before, given a subset $J_{i} \subset \{1, \ldots, M\}$, let
$x_{i} = \sum_{j \in J_{i}} \vartheta_{j}$.  In order to discuss linear
independence and rank, first recall that 
$\vec v_{i} =\sum_{j \in J_{i}} \vec e_{j}$ where
$\vec e_{1}, \ldots, \vec e_{M}$ denotes the standard basis of
$\R^M$.  We can then write
$$
x_{i} = \vec v_{i} \cdot \vec \vartheta
$$
where $\vec \vartheta = (\vartheta_{1}, \ldots, \vartheta_{M})$ denotes a vector of
$M$ uniform and independent random variables taking values in $\T^1$.
Letting
$$
\Delta_{i} := x_{i}-x_{i+1}, \quad \text{ and } \quad  \vec w_{i} := \vec
v_{i}-\vec v_{i+1}, 
\quad i=1,\ldots,r-1
$$
we find that
$$
\Delta_{i} = (\vec v_{i}-\vec v_{i+1}) \cdot  \vec \vartheta = \vec w_{i}
\cdot \vec \vartheta. 
$$
\begin{lem}
  \label{lem:expectation}
Let $f$ be a compactly supported Schwartz function.
Given any $r$-tuple of distinct subsets $J_{1}, \ldots, J_{r} \subset \{1,
\ldots, M\}$ with associated difference vectors 
$\vec w_{1}, \ldots, \vec w_{r-1}$, let  $d$ denote the dimension of the
 vector space spanned by these vectors.  If $d$ is
maximal, i.e., $d=r-1$, then
$$
\E
\left(
  \FN(x_{1}-x_{2}, \ldots, x_{r-1}-x_{r})
\right)
=
\frac{1}{N^{r-1}} \widehat{f}(0).
$$
If $0 < d < r-1$, then
$$
\E
\left(
|\FN(x_{1}-x_{2}, \ldots, x_{r-1}-x_{r})|
\right)
\ll_{f} 1/N^{d}.
$$
\end{lem}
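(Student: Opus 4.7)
The plan is to Fourier-expand $\FN$ via \eqref{eq:Fform}, exploit the identity $\Delta_i = \vec w_i \cdot \vec \vartheta$, and then collapse the expectation using independence of the $\vartheta_j$'s.

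First I would write
\[
\FN(\Delta_1,\ldots,\Delta_{r-1}) = \frac{1}{N^{r-1}} \sum_{\vec k \in \Z^{r-1}} \widehat f\!\left(\frac{\vec k}{N}\right) e^{4i (k_1\Delta_1 + \cdots + k_{r-1}\Delta_{r-1})},
\]
and observe that $\sum_i k_i \Delta_i = \vec u(\vec k)\cdot\vec\vartheta$ with $\vec u(\vec k) := \sum_{i=1}^{r-1} k_i \vec w_i \in \Z^M$. Since the $\vartheta_j$'s are independent and uniform on $\R/(\pi/2)\Z$, a direct computation gives $\E[e^{4i m \vartheta_j}] = \one_{m=0}$ for $m \in \Z$, so
\[
\E\!\left[e^{4i\sum_i k_i\Delta_i}\right] = \prod_{j=1}^M \E\!\left[e^{4i u_j(\vec k)\vartheta_j}\right] = \one_{\vec u(\vec k)=\vec 0}.
\]
In the maximal rank case $d = r-1$, the vectors $\vec w_1,\ldots,\vec w_{r-1}$ are linearly independent over $\R$, so $\vec u(\vec k) = \vec 0$ forces $\vec k = \vec 0$. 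Taking expectations in the Fourier expansion term-by-term therefore leaves only the $\vec k = \vec 0$ contribution and yields $\E[\FN(\vec\Delta)] = \widehat f(0)/N^{r-1}$, as claimed.

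For $0 < d < r-1$, the set $\Lambda := \{\vec k \in \Z^{r-1} : \vec u(\vec k) = \vec 0\}$ is a sublattice of $\Z^{r-1}$ of rank $r-1-d$. The identity above only controls $\E[\FN]$, whereas the lemma asks for $\E[|\FN|]$, so I would majorize $|f|$ by a nonnegative Schwartz function $g$ on $\R^{r-1}$ (possible since $f$ is compactly supported) and let $G_N$ be defined from $g$ by the same periodization formula as in \eqref{eq:Fform}. Applying the triangle inequality to the periodized sum representation of $\FN$ gives $|\FN| \le G_N$ pointwise, and rerunning the Fourier computation with $G_N$ in place of $\FN$ yields
\[
\E\bigl[|\FN(\vec\Delta)|\bigr] \le \E[G_N(\vec\Delta)] = \frac{1}{N^{r-1}} \sum_{\vec k \in \Lambda} \widehat g\!\left(\frac{\vec k}{N}\right).
\]

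To finish, I would estimate the lattice sum using Schwartz decay of $\widehat g$. Since $\Lambda$ has rank $m := r-1-d$ inside $\Z^{r-1}$, standard counting gives $|\{\vec k \in \Lambda : |\vec k| \le R\}| \ll_\Lambda 1 + R^m$, and a dyadic decomposition in $R$ combined with the rapid decay of $\widehat g$ produces $\sum_{\vec k \in \Lambda} |\widehat g(\vec k/N)| \ll_g N^m$; dividing by $N^{r-1}$ recovers the claimed bound $O_f(1/N^d)$. The one genuine subtlety is the switch from $\E[\FN]$ to $\E[|\FN|]$ in the degenerate case, which the nonnegative Schwartz majorant $g$ handles cleanly once one notices that the Fourier identity applies to $G_N$ verbatim.
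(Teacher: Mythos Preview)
Your proof is correct. The first part is essentially the paper's argument recast in Fourier language: the paper invokes Lemma~\ref{lem:uniform-push-forward} to conclude that $(\Delta_1,\ldots,\Delta_{r-1})$ is uniform on $\T^{r-1}$ when $d=r-1$, which is precisely your observation that only $\vec k=\vec 0$ survives the expectation.

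For the second part the paper takes a different route: it selects $d$ independent difference vectors $\vec w_{j_1},\ldots,\vec w_{j_d}$, defines $g_N$ as the supremum of $|F_N|$ over the remaining $r-1-d$ coordinates, and uses the compact support of $f$ to confine the support of $g_N$ to a $d$-dimensional ball of radius $\ll_f 1/N$; the bound $\E(g_N)\ll_f 1/N^d$ then follows from Lemma~\ref{lem:uniform-push-forward} applied in dimension $d$, with no Fourier expansion or lattice sum. Your approach via a nonnegative Schwartz majorant $g$ and a lattice sum over $\Lambda=\{\vec k:\sum_i k_i\vec w_i=0\}$ is also valid and arguably more systematic. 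The one point to tighten is that your lattice count $|\Lambda\cap B_R|\ll_\Lambda 1+R^m$ must be uniform in the choice of $J_1,\ldots,J_r$, since the lemma's implied constant depends only on $f$ (and implicitly $r$). This is easy to arrange: $\Lambda$ is a rank-$m$ sublattice of $\Z^{r-1}$, so some coordinate projection $\pi:\Z^{r-1}\to\Z^m$ is injective on $\Lambda$, whence $|\Lambda\cap B_R|\le |\Z^m\cap \pi(B_R)|\ll_r (1+R)^m$ uniformly. The paper's support argument sidesteps this uniformity question entirely, which is its main practical advantage; your Fourier argument, on the other hand, does not actually require $f$ to be compactly supported.
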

\begin{proof}
  The first part is an immediate consequence of
  Lemma~\ref{lem:uniform-push-forward}, as
  $(\Delta_{1}, \ldots, \Delta_{r-1})$ is a random variable uniformly
  distributed on $\T^{r-1}$.

  The second part follows by a similar argument: choose $d$ independent 
  vectors $\vec w_{j_{1}}, \vec w_{j_{2}}, \ldots \vec w_{j_{d}}$, and use that
  \begin{equation}
    \label{eq:expectation-sandwich}
\E(     |\FN(x_{1}-x_{2}, \ldots, x_{r-1}-x_{r} )|)
\ll_{f}
\E( g_{N}(x_{j_{1}}-x_{j_{2}}, \ldots, x_{j_{d}}-x_{j_{d+1}}) )
\ll_{f} 1/N^{d},
\end{equation}
where $g_{N}(x_{j_{1}}-x_{j_{2}}, \ldots, x_{j_{d}}-x_{j_{d+1}})$ is
obtained from $\FN$ by taking the supremum of
$|\FN(x_{1}-x_{2}, \ldots, x_{r-1}-x_{r})|$ over the coordinates
corresponding to $x_{j_{i}}-x_{j_{i+1}}$ fixed for $i=1,\ldots,d$ and
the 
other ones ranging freely.
(Here we use that the support
of $g_{N}$ is 
contained in some $d$-dimensional ball of radius $\ll_{f} 1/N$.)
\end{proof}

Before proceeding we next show that all low rank subspaces can be
defined via $O_{r}(1)$ linear forms (i.e., it does not depend on
$M$.)
\begin{lem}
  \label{lem:bounded-number-of-W}
  Let $W = \operatorname{Span}(\vec w_{1}, \ldots, \vec w_{r-1})$ and
  assume that $d := \dim(W) < r-1$. Choosing a basis for $W$
  consisting of $d$ independent elements of
  $\{\vec w_{1}, \ldots, \vec w_{r-1} \}$, the remaining vectors are
  given by $r-1-d$ linear forms in the basis vectors, and the number of
  distinct collections of such forms is then $O_{r}(1)$ (in particular, the
  estimate is uniform for all tuples $J_{1}, \ldots, J_{r-1}$ as long
  as $d < r-1$.)
\end{lem}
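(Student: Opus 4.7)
The plan is to use Cramer's rule together with Hadamard's inequality to show that the coefficients appearing in any linear relation among the $\vec w_i$'s come from a finite set depending only on $r$, uniformly in $M$. The first observation is that since $\vec v_i = \sum_{j \in J_i} \vec e_j \in \{0,1\}^M$, every difference $\vec w_i = \vec v_i - \vec v_{i+1}$ lies in $\{-1,0,1\}^M$. Thus the $M \times (r-1)$ matrix $[\vec w_1 \mid \cdots \mid \vec w_{r-1}]$ has entries in $\{-1,0,1\}$, with only the number of rows allowed to be large.

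Next, fix a basis $\vec w_{i_1}, \ldots, \vec w_{i_d}$ drawn from $\{\vec w_1, \ldots, \vec w_{r-1}\}$. Since these are linearly independent, there is a subset $T \subset \{1,\ldots, M\}$ with $|T|=d$ such that the $d \times d$ submatrix $B$ obtained by selecting the rows of $[\vec w_{i_1} \mid \cdots \mid \vec w_{i_d}]$ indexed by $T$ is invertible. For each $j \notin \{i_1,\ldots,i_d\}$ write $\vec w_j = \sum_{\ell=1}^d c_{j,\ell}\, \vec w_{i_\ell}$; restricting to rows in $T$ gives $\vec w_j|_T = B\vec c_j$, hence $\vec c_j = B^{-1} \vec w_j|_T$. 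By Cramer's rule, the entries of $B^{-1}$ are ratios of $(d-1)\times(d-1)$ cofactors of $B$ to $\det(B)$. Since $B$ is integral with entries in $\{-1,0,1\}$, Hadamard's inequality shows each cofactor is bounded in absolute value by $(r-1)^{(r-1)/2}$; combined with $|\det(B)|\ge 1$ and the fact that $\vec w_j|_T \in \{-1,0,1\}^d$, it follows that every $c_{j,\ell}$ is a rational $p/q$ with $|p|,|q|$ bounded by a constant depending only on $r$.

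Consequently there are only $O_r(1)$ possible values for each coefficient, hence $O_r(1)$ possibilities for each coefficient vector $\vec c_j$, and thus at most $\left(O_r(1)\right)^{r-1-d} = O_r(1)$ collections of linear forms for any fixed basis choice. Since there are at most $\binom{r-1}{d}\le 2^{r-1}$ ways of choosing the basis from $\{\vec w_1,\ldots,\vec w_{r-1}\}$, the total number of distinct collections of forms is $O_r(1)$, as claimed.

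I do not foresee any substantive obstacle; the proof is essentially a direct application of Cramer's rule. The only subtle point is ensuring uniformity in $M$, which is supplied for free by the structural fact that $\vec w_i \in \{-1,0,1\}^M$, so that every minor of interest takes values in a fixed finite set of integers.
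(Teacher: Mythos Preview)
Your proof is correct and follows essentially the same approach as the paper: both arguments reduce to a $d$-row submatrix with entries in $\{-1,0,1\}$ (using that $\vec w_i \in \{-1,0,1\}^M$) and observe that this submatrix, which has dimensions bounded in terms of $r$ only, determines the linear relations. The only cosmetic difference is that the paper counts the $O_r(1)$ possible $d\times(d+1)$ submatrices directly (each determines its one-dimensional kernel up to sign), whereas you solve explicitly for the coefficients via Cramer's rule and bound the resulting numerators and denominators with Hadamard's inequality --- a slightly more quantitative but equivalent route.
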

\begin{proof}
  After renumbering indices (there are at most $(r-1)!=O_{r}(1)$ ways to do
  this) we may assume that $\vec w_{1}, \ldots, \vec w_{d}$ are independent, and
  that for all integers $i \in [1, r-1-d]$ we have
$$
\gamma_{i} \vec w_{d+i} = L_{i}(\vec w_{1}, \ldots, \vec w_{d})
$$
with $\gamma_{i} \neq 0$ an integer and each $L_{i}$ a linear form
with integer coefficients, with the property that the gcd of
$\gamma_{i}$ and the coefficients of $L_{i}$ equals one.  Form a
$M \times (d+1)$ matrix $A$ having columns
$\vec w_{1}, \ldots, \vec w_{d}, \vec w_{d+i}$; the above linear
relation can then be 
formulated in terms of the existence of a nonzero vector
$\vec l \in \Z^{d+1}$ so that $A \vec l = 0$.  Since $\vec w_{1},
\ldots , \vec w_{d}$ are 
independent, we may form a $d \times (d+1)$ matrix $B$, having rank
$d$, by selecting $d$ independent rows in $A$, with the property that
$A \vec l = 0$ if and only if $B \vec l = 0$. Further the set of
$\{ \vec l : B \vec l = 0 \} $ lies on a line, so if the coordinates of $\vec l$ have gcd one (in analogy with the above gcd condition), $\vec l$ is up to
sign uniquely determined by $A$.  On the other hand, each entry in $B$
lies in $\{-1,0,1\}$, hence there are at most $O_{d}(1) = O_{r}(1)$
possible ways to choose $B$, and hence the number of ways to
choose $\vec l$ is also $O_{r}(1)$.

Thus, for each $i$, there are $O_{r}(1)$   ways to choose $\gamma_{i}, L_{i}$,
and thus there are in total $O_{r}(1)$ possible ways
to select $r-1-d$ such linear relations.
\end{proof}

\begin{prop}
\label{prop:expectation-counting-tuples}  
  The number of $(\vec v_{1},\vec v_{2}, \ldots, \vec v_{r}) \in
  (\{0,1\}^{M})^{r}$ 
  such that $\vec v_{i} \neq \vec v_{j}$ for $i\neq j$, and so that the rank of
  $(\vec w_{1}, \ldots, \vec w_{r-1})$ equals $r-1$, is 
$$
N^{r}(1+o_{r}(1))
$$
as $M \to \infty$.

Further, the number of
$(\vec v_{1},\vec v_{2}, \ldots, \vec v_{r}) \in (\{0,1\}^{M})^{r}$
that are pairwise 
distinct, and such that the rank of $(\vec w_{1}, \ldots, \vec w_{r-1})$ equals
$d < r-1$ is
$$
o_{r}(N^{d+1}),
$$
as $M \to \infty$.

\end{prop}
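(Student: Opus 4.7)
The plan is to first establish the second assertion (the count for rank $d < r-1$), and then derive the first (maximal-rank) assertion by subtracting the low-rank contributions from the total number of pairwise distinct $r$-tuples. For the second part, we reduce to a coordinate-by-coordinate counting problem, apply Lemma~\ref{lem:hypercube-intersection}, and handle a borderline case via a structural fact about hypercube-maximizing subspaces of $\R^r$.

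By Lemma~\ref{lem:bounded-number-of-W}, there are only $O_r(1)$ possible configurations of integer linear relations that can hold among $\vec w_1, \ldots, \vec w_{r-1}$ when their rank is $d$. For each such configuration, the $r$-tuples $(\vec v_1, \ldots, \vec v_r) \in (\{0,1\}^M)^r$ consistent with it are precisely those for which $(v_1(m), \ldots, v_r(m)) \in T$ holds for every $m \in \{1, \ldots, M\}$, where $T := \tilde W \cap \{0,1\}^r$ and $\tilde W \subseteq \R^r$ is the pre-image of $W = \operatorname{Span}(\vec w_1, \ldots, \vec w_{r-1}) \subseteq \R^{r-1}$ under the difference surjection $\phi: \R^r \to \R^{r-1}$, $(a_1, \ldots, a_r) \mapsto (a_1-a_2, \ldots, a_{r-1}-a_r)$. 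Since $\ker \phi = \R \cdot (1, \ldots, 1)$, $\tilde W$ is $(d+1)$-dimensional and contains $(1, \ldots, 1)$, and the total count of consistent tuples is exactly $|T|^M$. By Lemma~\ref{lem:hypercube-intersection}, $|T| \le 2^{d+1}$; when $|T| \le 2^{d+1}-1$ we obtain $|T|^M \le N^{d+1}(1 - 2^{-(d+1)})^M = o(N^{d+1})$.

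The main subtlety is the boundary case $|T| = 2^{d+1}$, where the naive bound $|T|^M = N^{d+1}$ only matches, rather than beats, the target. To handle this we use the following structural claim: if a $(d+1)$-dimensional subspace $\tilde W \subseteq \R^r$ contains $(1, \ldots, 1)$ and satisfies $|\tilde W \cap \{0,1\}^r| = 2^{d+1}$, then $\tilde W = \operatorname{Span}(\vec e_{A_0}, \ldots, \vec e_{A_d})$ for some partition $\{A_0, \ldots, A_d\}$ of $\{1, \ldots, r\}$ into $d+1$ non-empty blocks, where $\vec e_A := \sum_{i \in A} \vec e_i$. Granted this, since $d + 1 < r$, at least one block $A_\ell$ has $|A_\ell| \ge 2$; taking $j_1 < j_2 \in A_\ell$, every $(a_1, \ldots, a_r) \in T$ satisfies $a_{j_1} = a_{j_2}$, so every consistent $r$-tuple has $\vec v_{j_1} = \vec v_{j_2}$, violating pairwise distinctness. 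Hence this case contributes zero distinct tuples, and summing over the $O_r(1)$ configurations yields the overall bound $o(N^{d+1})$ for pairwise distinct $r$-tuples of rank exactly $d$.

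For the first part, the total number of pairwise distinct $r$-tuples in $(\{0,1\}^M)^r$ equals $N(N-1)\cdots(N-r+1) = N^r(1 + O_r(1/N))$, and subtracting the rank-$d$ tuples for $d < r-1$, which contribute $\sum_{d=0}^{r-2} o_r(N^{d+1}) = o_r(N^{r-1})$ by the second part, leaves $N^r(1 + o_r(1))$ tuples of maximal rank. The main obstacle is the structural claim about hypercube-maximizing subspaces: one first uses $|T| = 2^{d+1}$ together with Lemma~\ref{lem:hypercube-intersection} applied to $\operatorname{Span}(T) \subseteq \tilde W$ to deduce $\operatorname{Span}(T) = \tilde W$; then extracts a basis $\vec b_0, \ldots, \vec b_d$ of $\tilde W$ from $T \subseteq \{0,1\}^r$; and observes that all $2^{d+1}$ $\{0,1\}$-combinations of this basis must lie in $T$ by counting, which forces the supports $A_i := \operatorname{supp}(\vec b_i)$ to be pairwise disjoint. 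The hypothesis $(1,\ldots,1) \in \tilde W$ then forces the union of the supports to be all of $\{1, \ldots, r\}$, so the $A_i$'s form a partition.
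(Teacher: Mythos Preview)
Your overall strategy is sound and genuinely different from the paper's. The paper handles the second assertion by fixing a single linear relation $\sum_i \beta_i \vec v_i = 0$ (obtained via Lemma~\ref{lem:consecutive-w}), ruling out the two-nonzero-coefficient case by distinctness, and in the $\geq 3$-coefficient case using a law-of-large-numbers argument on sizes of symmetric differences of the underlying subsets $J_i$ to show the remaining freedom is $o(N^{d+1})$. You instead work coordinate-by-coordinate: fix the full relation configuration, observe that each column $(v_1(m),\ldots,v_r(m))$ must lie in a fixed $(d+1)$-dimensional subspace $\tilde W\subseteq\R^r$, and bound the count by $|T|^M$ with $T=\tilde W\cap\{0,1\}^r$. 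This is cleaner and gives an explicit exponential saving $(1-2^{-(d+1)})^{M}$ in the non-boundary case, rather than a soft $o(1)$.

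There is a notational slip: you write $W=\operatorname{Span}(\vec w_1,\ldots,\vec w_{r-1})\subseteq\R^{r-1}$, but the $\vec w_i$ lie in $\R^M$. What you need is the \emph{column space} of the $(r-1)\times M$ matrix with rows $\vec w_i$ (equivalently, the annihilator in $\R^{r-1}$ of the relation space); this is the $d$-dimensional subspace of $\R^{r-1}$ whose $\phi$-preimage is your $\tilde W$. This does not affect the substance.

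More seriously, your justification of the structural claim has a circular step. You pick a basis $\vec b_0,\ldots,\vec b_d$ of $\tilde W$ from $T$ and assert that ``all $2^{d+1}$ $\{0,1\}$-combinations of this basis must lie in $T$ by counting,'' concluding that the supports $A_i=\operatorname{supp}(\vec b_i)$ are disjoint. But $\sum_{i\in S}\vec b_i$ lies in $\{0,1\}^r$ (hence in $T$) only if the supports are already pairwise disjoint --- which is what you want to prove --- so there is no counting argument available here that forces these combinations into $T$. The claim is true, and a correct argument goes by refining the proof of Lemma~\ref{lem:hypercube-intersection}: choose $d+1$ coordinates $I$ on which $\tilde W$ projects isomorphically; since $|T|=2^{d+1}$ the projection $T\to\{0,1\}^{I}$ is a bijection, so for each $j\notin I$ the $j$-th coordinate (a linear functional on $\R^{I}$) maps $\{0,1\}^{I}$ into $\{0,1\}$ and therefore is either zero or a single coordinate projection $a\mapsto a_{i_k}$; the hypothesis $(1,\ldots,1)\in T$ rules out the zero option, and grouping the $j$'s by which pivot $i_k$ they equal yields the partition. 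With this fix your argument is complete.
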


Before giving the proof we record the following simple linear algebra
result.
\begin{lem}
  \label{lem:consecutive-w}
  Let $W = \operatorname{Span}(\vec w_{1},\ldots, \vec w_{r-1})$ and let
  $d = \dim(W) < r-1$.  Then there exists a permutation of indices
  $i_{1},i_{2}, \ldots, i_{r}$ such that if we let
  $\vec w_{j}' := \vec v_{i_{j}}-\vec v_{i_{j+i}}$ for $j=1, \ldots,
  r-1$, we have 
  $W = \operatorname{Span}(\vec w_{1}',\ldots, \vec w_{d}')$.

\end{lem}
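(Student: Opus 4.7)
The plan is to exploit the affine structure: since every $\vec w_i = \vec v_i - \vec v_{i+1}$ lies in $W$, every pairwise difference $\vec v_a - \vec v_b$ also lies in $W$ by telescoping, so the affine span of $\vec v_1,\ldots, \vec v_r$ is contained in the $d$-dimensional affine space $\vec v_1 + W$. I will then find $d+1$ of the $\vec v_i$'s that are affinely independent in $\vec v_1 + W$, place them first in the desired permutation, and show that their consecutive differences form a basis of $W$ by an elementary triangular change of basis.

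First I would record the telescoping observation: for any $1 \le a < b \le r$, one has $\vec v_a - \vec v_b = \sum_{k=a}^{b-1} \vec w_k \in W$. In particular, the vectors $\{\vec v_j - \vec v_1 : 2 \le j \le r\}$ all lie in $W$, and since $\vec w_i = (\vec v_i - \vec v_1) - (\vec v_{i+1} - \vec v_1)$ they also span $W$. Because $\dim W = d$, I may select indices $i_2, \ldots, i_{d+1} \in \{2, \ldots, r\}$ such that
\[
\vec u_1 := \vec v_{i_2} - \vec v_1, \quad \vec u_2 := \vec v_{i_3} - \vec v_1,\quad \ldots, \quad \vec u_d := \vec v_{i_{d+1}} - \vec v_1
\]
form a basis of $W$. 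Set $i_1 = 1$ and extend $i_1, i_2, \ldots, i_{d+1}$ to a full permutation of $\{1, \ldots, r\}$ by appending the remaining indices in any order, producing $i_1, \ldots, i_r$.

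It remains to verify that $\vec w'_j := \vec v_{i_j} - \vec v_{i_{j+1}}$, for $j = 1, \ldots, d$, forms a basis of $W$. Each $\vec w'_j$ lies in $W$ by the opening observation, so by dimension it suffices to check linear independence. The telescoping identity
\[
\sum_{k=1}^{j} \vec w'_k \;=\; \vec v_{i_1} - \vec v_{i_{j+1}} \;=\; -\vec u_j
\]
shows that the matrix expressing $(-\vec u_1, \ldots, -\vec u_d)$ in terms of $(\vec w'_1, \ldots, \vec w'_d)$ is unit lower triangular, hence invertible. Since $\vec u_1, \ldots, \vec u_d$ are linearly independent by construction, so are $\vec w'_1, \ldots, \vec w'_d$, and therefore $W = \operatorname{Span}(\vec w'_1, \ldots, \vec w'_d)$.

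There is no real obstacle here: the entire content is the telescoping identity combined with the trivial fact that a $d$-dimensional affine space contains $d+1$ affinely independent points. The only point requiring a moment of care is the choice to place $\vec v_1$ in position $i_1$ so that the partial sums of consecutive differences along the permutation coincide (up to sign) with the chosen basis $\vec u_1, \ldots, \vec u_d$; once the permutation is set up this way, the unit-triangular change of basis makes the linear independence immediate.
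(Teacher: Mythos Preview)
Your proof is correct and follows essentially the same approach as the paper: both fix $i_1=1$, observe that the $\vec w_i$'s and the vectors $\vec v_1-\vec v_j$ are related by a lower-triangular matrix of ones (your telescoping identity), select $d$ independent vectors among the $\vec v_1-\vec v_j$, and then invoke the same triangular relation to conclude that the first $d$ consecutive differences along the permutation are independent. Your write-up is slightly more explicit about the telescoping step, but the argument is the same.
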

\begin{proof}
  We begin by noting that the vectors $\vec w_{1}, \ldots, \vec w_{r-1}$ have the
  same span as the vectors
  $\vec v_{1}-\vec v_{2}, \vec v_{1}-\vec v_{3}, \ldots, \vec
  v_{1}-\vec v_{r}$, since they are 
  related by a lower triangular matrix all whose entries are one.
  Thus there exist a relabeling of indices $i_{1}, \ldots, i_{r}$ such
  that $i_{1} =1$, and that the vectors
  $\vec v_{i_{1}}-\vec v_{i_{2}}, \vec v_{i_{1}}-\vec v_{i_{3}},
  \ldots, \vec v_{i_{1}}-\vec v_{i_{d+1}}$ are 
  linearly independent. Letting $\vec w_{j}' =
  \vec v_{i_{j}}-\vec v_{i_{j+1}}$ for $j=1,\ldots, r-1$, and arguing as above, 
  we find that 
  that $\vec w_{1}',\vec w_{2}',\ldots, \vec w_{d}'$ are independent.
\end{proof}

\begin{proof}[Proof of Proposition \ref{prop:expectation-counting-tuples}]
  We give a simple probabilistic proof showing that certain events
  occur with probability $1+o(1)$ as $M$ grows.  Namely, pick vectors
  $\vec v_{1}, \ldots, \vec v_{r} \in \{0,1\}^{M}$ by fair and independent coin
  flips --- to later obtain asymptotics for counts of vectors with
  properties of interest we then multiply said probability with
  $2^{Mr}=N^{r}$.  For $M$ large, the vectors are all pairwise
  distinct with probability $1+o(1)$.  Further, if we consider a
  coordinate $j \in \{ 1, \ldots, M\}$ and fix $i \in \{1,\ldots,r\}$,
  the likelihood that the $j$-th coordinate of $\vec v_{i}$ is one, and
  that the $j$-th coordinate for all other vectors $\vec v_{l}$,
  $l\in \{1,\ldots,r\} \setminus \{ j \}$, is zero is $1/2^{r}$.
  Thus, by the law of large numbers (as in the proof of
  Lemma~\ref{lem:law-large-numbers}) with probability $1+o(1)$ as
  $M \to \infty$, there are $r$ 
  indices $j_{1},j_{2}, \ldots,j_{r}$ so that
  $\vec v_{l} \cdot \vec e_{j_{i}} = \delta_{li}$ for $l =
  1,\dots,r$. Consequently the rank of $\vec w_{1},\ldots,\vec w_{r-1}$ is $r-1$
  with probability $1+o_{r}(1)$, which implies the first part.
  
  For the second part, we use Lemma~\ref{lem:consecutive-w} to reduce
  to the case of $\vec w_{1},\ldots,\vec w_{d}$ being independent, and that
  $\vec w_{d+1}$ can be written as a linear combination of the first $d$
  vectors. In particular there exists scalars $\alpha_{i}$ such that
$
\sum_{i=1}^{d+1} \alpha_{i} \vec w_{i} = \vec 0
$
  (with $\alpha_{d+1} \neq 0$),  
  leading to a linear relation
  $$
\sum_{i=1}^{d+2} \beta_{i} \vec v_{i} = \vec{0}
  $$
  where $\beta_{1} = \alpha_{1}, \beta_{d+2} = - \alpha_{d+1}$, and
  otherwise $\beta_{j} = \alpha_{j+1}-\alpha_{j}$; note in particular
  that $\beta_{d+2} \neq 0$.  Fix the coefficients of such a relation
  (note that there are $O_{r}(1)$ possible relations by
  Lemma~\ref{lem:bounded-number-of-W}.)
  Further, if $l$ denotes the smallest
  integer such that $\alpha_{l} \neq 0$, we also find that
  $\beta_{l} \neq 0$, in particular there are at least two
  nonvanishing $\beta_{i}'s$.
  If there are exactly two nonzero $\beta_{i}$'s, we have
  $\beta_{l}\vec v_{l}= - \beta_{d+2}\vec v_{d+2}$.  Since
  $\beta_{l},\beta_{d+2} \neq 0$ (and at least one of $\vec v_{l}$ and
  $\vec v_{d+1}$ must be nonzero), we in fact have have $\vec v_{l},
  \vec v_{d+2} \neq \vec 0$, 
and since the coordinates of both $\vec v_{l}$ and $\vec v_{d+2}$ are in
$\{0,1\}$ we must have $\beta_l = -\beta_{d+2}$, and thus
$\vec v_{l} = \vec v_{d+2}$, contradicting the vectors being assumed to be
pairwise distinct.

Now, if $\beta_{i}   \neq 0$ for at least three values, we may write
$$
\alpha \vec v_{i_{1}} + \gamma \vec v_{i_{2}} = \sum_{\substack{1 \le i \le d+2 \\i \neq i_{1}, i_{2}}}
\delta_{i} \vec v_{i}
$$
with $\alpha,\gamma >0$.  In particular, the support\footnote{By the
  support of a vector we mean the set of indices for which the
  corresponding coordinates are nonzero.} of
$\vec v_{i_{1}},\vec v_{i_{2}}$ is determined by the right-hand side, and thus
(after taking the relabeling of indices into account), so is
$J_{i_{1}} \cup J_{i_{2}}$.  In fact, $J_{i_{1}} \cap J_{i_{2}}$ as
well as the symmetric difference $J_{i_{1}} \Delta J_{i_{2}}$ is also
determined by the right-hand side.  Letting $M_{0}$ denote the cardinality of said
determined symmetric difference, we find that there are $2^{M_{0}}$
ways to choose $J_{i_{1}}, J_{i_{2}}$ for a fixed right-hand side (note that there
are at most $N^{d}$ possibilities for the right-hand side).  Now, for a
``generic'' right-hand side, a simple probabilistic argument (i.e., using the law
of large numbers as before) to count the number of indices not in any
$J_{i}$ for $i \neq i_{1},i_{2}$, shows that for $N^{d}(1+o_{r}(1))$
of the possible choices of the right-hand side, we have
$|\cup_{i \neq i_{1},i_{2}} J_{i}| \leq (1-\epsilon)M$ (where
$\epsilon = \epsilon_{r}>0$ is allowed to depend on $r$ but not on
$M$.)  If this is the case, we have $M_{0} \leq (1-\epsilon)M$ and
there are at most $2^{(1-\epsilon)M}=o_{r}(N)$ ways to choose
$J_{i_{1}},J_{i_{2}}$, for a total of $o_{r}(N^{d+1})$ possibilities.
On the other hand, if the right-hand side is one of the $o_{r}(N^{d})$
``non-generic'' choices (in particular allowing $M_{0}=M$), there are
$2^{M_{0}} \leq 2^{M} = N$ possible ways to choose
$J_{i_{1}},J_{i_{2}}$, and the total number of possibilities is also
here $o_{r}(N^{d+1}$).

Finally, once $\vec v_{1}, \ldots, \vec v_{d+2}$ are chosen, {\em all}
$\vec w_{i}$ are determined since
$\vec w_{d+1}, \vec w_{d+2}, \ldots, \vec w_{r-1}$ depend linearly on
$\vec w_{1}, \ldots, \vec w_{d}$, and thus the vectors
$\vec v_{d+3}, \ldots \vec v_{r}$ are also uniquely determined.  As the
number of linear relations is $O_{r}(1)$, we find that total number of
ways to choose $\vec v_{1}, \ldots, \vec v_{r}$ so that
$\vec w_{1}, \ldots, \vec w_{r-1}$ has non-maximal rank $d<r-1$ is
$o_{r}(N^{d+1})$.
\end{proof}

\begin{thm}
\label{thm:random-expectation}
Let $f$ be a compactly supported Schwartz function.
As $M$ grows we have
  $$
\E 
\left(
\mathcal{R}_{r}^*(\FN)   
\right)
= \widehat{f}(0) +o_{f}(1)
$$
and
$$
\E 
\left(
\mathcal{R}_{r}^*(\FN)^2   
\right)
= \widehat{f}(0)^{2}
 +o_{f}(1).
$$
\end{thm}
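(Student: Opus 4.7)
The plan is to compute both moments by stratifying tuples of subsets according to the rank $d$ of the associated difference-vector system, evaluating the inner expectation on each stratum using Lemma~\ref{lem:expectation}, and counting strata with Proposition~\ref{prop:expectation-counting-tuples} (or its $2r$-tuple analog).

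For the first moment, linearity of expectation gives
\[
\E\bigl(\mathcal{R}_r^*(F_N)\bigr) = \frac{1}{N} \sumstar_{J_1,\ldots,J_r \subset \{1,\ldots,M\}} \E\bigl(F_N(x_{J_1}-x_{J_2},\ldots,x_{J_{r-1}}-x_{J_r})\bigr).
\]
When $\vec w_1,\ldots,\vec w_{r-1}$ attain maximal rank $d=r-1$, Lemma~\ref{lem:expectation} evaluates the inner expectation to $\widehat f(0)/N^{r-1}$, and Proposition~\ref{prop:expectation-counting-tuples} produces $N^r(1+o_r(1))$ such tuples, contributing $\widehat f(0)+o_f(1)$. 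For each $d\in\{1,\ldots,r-2\}$ the same two results bound the inner expectation by $O_f(1/N^d)$ and the number of rank-$d$ tuples by $o_r(N^{d+1})$, so the total contribution from $d<r-1$ is $\frac{1}{N}\sum_{d=1}^{r-2} o_r(N^{d+1})\cdot O_f(1/N^d) = o_{f,r}(1)$.

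For the second moment I would run the analogous argument on $2r$-tuples. Expanding
\[
\E\bigl(\mathcal{R}_r^*(F_N)^2\bigr) = \frac{1}{N^2} \sumstar_{J_1,\ldots,J_r}\sumstar_{J_{r+1},\ldots,J_{2r}} \E\bigl(F_N(x_{J_1}-x_{J_2},\ldots)\,F_N(x_{J_{r+1}}-x_{J_{r+2}},\ldots)\bigr),
\]
and writing $\vec w_i,\vec w_i'$ for the two families of $r-1$ difference vectors, Lemma~\ref{lem:uniform-push-forward} gives inner expectation $\widehat f(0)^2/N^{2(r-1)}$ when the combined collection $\{\vec w_1,\ldots,\vec w_{r-1},\vec w_1',\ldots,\vec w_{r-1}'\}$ has maximal rank $2(r-1)$, and $O_f(1/N^d)$ otherwise. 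The required counting statement is a direct $2r$-variable analog of Proposition~\ref{prop:expectation-counting-tuples}: $N^{2r}(1+o_r(1))$ maximal-rank tuples and $o_r(N^{d+2})$ tuples of combined rank $d<2(r-1)$, where the shift from $d+1$ to $d+2$ reflects the two reference vectors $\vec v_1,\vec v_{r+1}$, one per half. Combining exactly as for the first moment yields main term $\widehat f(0)^2$ and total error $o_f(1)$.

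The main obstacle is proving this $2r$-variable analog of Proposition~\ref{prop:expectation-counting-tuples}. The max-rank count is a direct transcription of the law-of-large-numbers argument in the proof of the original proposition: with probability $1+o_r(1)$ a random $2r$-tuple contains $2r$ pivot indices $j_1,\ldots,j_{2r}$ with $\vec v_l\cdot\vec e_{j_i}=\delta_{li}$, forcing the combined difference system to have rank $2(r-1)$. The non-maximal case requires the same three-branch case analysis: after fixing one of the $O_r(1)$ minimal linear relations among $\vec v_1,\ldots,\vec v_{2r}$ furnished by Lemma~\ref{lem:bounded-number-of-W}, the subcase of exactly two nonvanishing coefficients is ruled out by distinctness within each $r$-tuple, while the three-or-more subcase determines the union, intersection, and symmetric difference of a single pair $J_{i_1},J_{i_2}$, leaving only $o_r(N)$ choices to split that pair by a law-of-large-numbers argument as in Lemma~\ref{lem:law-large-numbers}. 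A final technicality is that $\sumstar$ only enforces distinctness within each $r$-tuple; cross-half coincidences $J_i = J_{r+j}$ occur on a set of size $O(N^{2r-1})$ and contribute $O(N^{2r-1})\cdot N^{-2}\cdot O(N^{-2(r-1)})=O(1/N)$, which is absorbed in the $o_f(1)$ error.
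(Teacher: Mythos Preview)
Your approach matches the paper's closely: both stratify by rank, use Lemma~\ref{lem:expectation} for the inner expectation, and Proposition~\ref{prop:expectation-counting-tuples} for the tuple counts. The first-moment argument is identical to the paper's.

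For the second moment there is one point to tighten. You assert that the two-nonvanishing-coefficient case is ruled out by distinctness within each $r$-tuple, but as stated this does not exclude a cross-half relation $\beta_i\vec v_i+\beta_j'\vec v_j'=0$, which would only force $J_i=J_{r+j}$ --- and that is \emph{not} forbidden by $\sumstar$. The missing observation is that any linear relation among the $\vec w_i,\vec w_j'$, when rewritten in the $\vec v$'s, telescopes so that $\sum_i\beta_i=0$ and $\sum_j\beta_j'=0$ separately; hence each half that appears contributes at least two nonzero coefficients, and a genuine two-coefficient relation must lie entirely in a single half, where per-half distinctness does finish it. The paper handles this by explicitly splitting the minimal relation into a pure-half case (bounded by $N^{d_1+1}\cdot o_r(N^{d_2+1})=o_r(N^{d+2})$, invoking Proposition~\ref{prop:expectation-counting-tuples} on one half) and a cross-half case (automatically at least three nonzero $\beta$'s, handled by the symmetric-difference argument). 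With this observation in hand your ``final technicality'' becomes unnecessary; as written it is also mis-argued, since you apply the full-rank expectation bound $O(N^{-2(r-1)})$ uniformly to all $O(N^{2r-1})$ cross-coincident tuples, many of which have smaller rank and hence larger expectation.
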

\begin{proof}
{\em The expectation:}  
By linearity of the expectation, we have
  $$
\E 
\left(
\mathcal{R}_{r}^*(\FN)   
\right)
=
\frac{1}{N} \sumstar_{J_{1},\ldots, J_{r} \subset \{1,\ldots,M\}}
\E 
\left(
\FN(x_{1}  -x_{2}, \ldots, x_{r-1}-x_{r})
\right).
  $$
  The main term arises from ``generic'' choices of distinct subsets
  $J_{1}, \ldots, J_{r-1}$ (i.e., the full rank ones); by
  Proposition~\ref{prop:expectation-counting-tuples} there are
  $N^{r}(1+o_{r}(1))$ such tuples, and by
  Lemma~\ref{lem:expectation}, each such term
    contributes
  $$
\frac{1}{N^{r-1}}  \widehat{f}(0).
$$

The contribution from the ``non-generic'' choices of distinct subsets (i.e.,
having rank $d < r-1$), is then (after taking into account there being
at most $O_{r}(1)$ possible linear relations), using the
second parts of 
  Proposition~\ref{prop:expectation-counting-tuples} 
and Lemma~\ref{lem:expectation}, is
$$
\ll_{r,f} \frac{1}{N}
\sum_{d=1}^{r-2} \frac{1}{N^{d}}
o_{r}(N^{d+1})
= o_{r,f}(1).
$$

{\em Bounding the variance:}
The argument to bound the variance is similar to the one used to
bound the pair correlation variance. We first note that
\begin{multline}
\E 
\left(
\mathcal{R}_{r}^*(\FN) ^2  
\right)=
\frac{1}{N^{2}} \sumstar_{J_{1},\ldots, J_{r} \subset \{1,\ldots,M\}}
  \\
\sumstar_{J_{1}',\ldots, J_{r}' \subset \{1,\ldots,M\}}
\E 
\left(
  \FN(x_{1}-x_{2}, \ldots, x_{r-1}-x_{r}) \cdot
    \FN(x_{1}'-x_{2}', \ldots, x_{r-1}'-x_{r}')
  \right).
\end{multline}
  The number of ``generic'' tuples (i.e., giving the full rank
  $2(r-1)$) is $N^{2r}\cdot (1+o(1))$, and for these we have
  $$
\E 
\left(
  \FN(x_{1}-x_{2}, \ldots,x_{r-1}-x_r) \cdot
    \FN(x_{1}'-x_{2}', \ldots, x_{r-1}'-x_r') 
\right) = 
\left(
  \frac{\widehat{f}(0)}{N^{r-1}}
\right)^{2}.
$$

To count the contribution from ``non-generic'' tuples with rank
$d < 2(r-1)$ we argue as follows.  Letting $W$ denote the linear span
of $\vec w_{1}, \ldots, \vec w_{r-1}, \vec w_{1}', \ldots, \vec w_{r-1}'$ we have
$d = \dim(W)$, and by an argument similar to the one used in
Lemma~\ref{lem:expectation} (i.e., use an inequality analogous to
(\ref{eq:expectation-sandwich}) together with
Lemma~\ref{lem:uniform-push-forward}) gives that
  $$
\E 
\left(
| \FN(x_{1}-x_{2}, \ldots, x_{r-1}-x_r) \cdot
    \FN(x_{1}'-x_{2}', \ldots, x_{r-1}'-x_r')  | 
\right)
\ll_{f} 1/N^{d}.
$$

It thus suffices to show that the number of ``non-generic'' tuples is
$o(N^{d+2})$.  We may after relabeling the $\vec v_{i}$ as well as the
$\vec v_{i}'$-indices select $d_{1}$ vectors
$\vec w_{1}, \ldots, \vec w_{d_{1}}$ and $d_{2}$ vectors
$\vec w_{1}',\ldots, \vec w_{d_{2}}'$ (with the vectors $\vec w_{i}$
and $\vec w_{i}'$ defined using the reordered indices)
$d = d_{1}+d_{2}$ and
  $$
  \operatorname{Span}(\vec w_{1}, \ldots, \vec w_{d_{1}}, \vec w_{1}',\ldots,
\vec w_{d_{2}}')
= 
\operatorname{Span}(\vec w_{1}, \ldots, \vec w_{r-1}, \vec
w_{1}',\ldots, \vec w_{r-1}'). 
$$
More precisely, relabel both the $\vec v_{i}$ and the $\vec v_{j}'$ sets of
indices as in the proof of Lemma~\ref{lem:consecutive-w}, in
particular with $\vec v_{i}$ given by $J_{i}$ after relabeling the
first set of indices put $\vec w_{i} = \vec v_{i+1}-\vec v_{i}$, and
similarly with $\vec v_{i}'$ given by $J_{i}'$ after relabeling the
second set of indices put $\vec w_{i}' := \vec v_{i}'-\vec v_{i+1}'$.
We then find that a basis for $W$ is given by
$\vec w_{1},\ldots,\vec w_{d_{1}}, \vec w_{1}',\ldots, \vec
w_{d_{2}}'$.

Consider now a minimal non-trivial linear relation, i.e.,
$$
\sum_{i=1}^{d_1} \alpha_{i} \vec w_{i}
+
\sum_{i=1}^{d_2} \alpha_{i}' \vec w_{i}'
+
\gamma \vec w = 0
$$
where $\gamma \neq 0$ and $ \vec w= \vec w_{d_{1}+1}$ or
$\vec w=\vec w_{d_{2}+1}'$.  In case the
relation purely involves either $\vec w_{i}'$ or $\vec w_{i}$-vectors
the same argument used to treat the expectation suffices: if (say) the
relation only involves $w_{i}'$ vectors (so that
$\alpha_{1} =\ldots =\alpha_{d_{1}} = 0$ and
$\vec w=\vec w_{d_{2}+1}'$), the number of such tuples is
$$
\ll
N^{d_{1}+1} o_{r}(N^{d_{2}+1})
= o_{r}(N^{d+2})
$$
as there are at most $N^{d_{1}+1}$ choices for the
unconstrained $\vec v_{1},\ldots, \vec v_{d_{1}+1}$, and the remaining
$\vec v_{i}$ 
are determined by the first $d_{1}+1$ ones; 
by Proposition~\ref{prop:expectation-counting-tuples} 
there are $o(N^{d_{2}+1})$ possible $\vec v_{i}'$-tuples.
In case
the relation involves at least one $\vec w_{i}$-vector and at least one
$\vec w_{i}'$-vector,
we obtain a linear relation
$$
\sum_{i=1}^{d_1+1} \beta_{i} \vec v_{i} + \sum_{j=1}^{d_2+1} \beta_{j}' \vec v_{j}' = 0
$$
with at least three nonzero coefficients --- the same argument used to
prove Proposition~\ref{prop:expectation-counting-tuples}
then gives that the number of such $2r$-tuples is also
$o( N^{d_{1}+1 + d_{2}+1}) = o(N^{d+2})$.
\end{proof}

\subsection{Poisson correlations in the general case} 
We next consider 
integers $n_{0}$,
where $n_{0}$ is fixed and allowed to have prime power divisors, 
and 
the angle contribution from the $n_{0}$-part is entirely
deterministic, whereas we use the random model for the
square free part. We define
\begin{equation}
\label{eq:beta-shift}  
\mathcal R_{r,n_0}^*(F_N) 
:= 
\frac{1}{N}
\sumstar_{J_{1}, \ldots, J_{r} \subset \{1, \ldots, M\}}
\, \,
\sum_{(\beta_{1}), \ldots, (\beta_{r}) \subset \OO : |\OO/\beta_i|=n_{0}}
F_{N}( \theta_{\beta_{2}}-\theta_{\beta_{1}} +
  x_{J_{2}}-x_{J_{1}}, \ldots ).
\end{equation}
(Note
that with probability one the angles $x_{J_1}+\theta_{\beta_1}$ and
$x_{J_2}+\theta_{\beta_2}$ are equal if and only if $J_1=J_2$ and
$(\beta_1)=(\beta_2)$ so that with probability one the sum above is
over distinct points $x_{J_1}+\theta_{\beta_1},\ldots,
x_{J_r}+\theta_{\beta_r}$.) 

\begin{lem}
  \label{lem:hybrid-moments}
  Fix $n_{0}$ such that $N_{0} := r(n_{0})>0$.  
Let $f$ be a compactly supported Schwartz function.
Further, given an integer $M > 0$ let $N=2^{M}r(n_0)$.
Then, as $M$ grows,
$$
\mathbb E \big(  \mathcal{R}_{r,n_0}^*(F_N) \big)
=
\widehat{f}(0) + o_{f}(1),\qquad \text{and} \qquad
\mathbb E \big(   \mathcal{R}_{r,n_0}^*(F_N)^2 \big)
=
\left(  \widehat{f}(0)
\right)^{2}
+ o_{f}(1).
$$
\end{lem}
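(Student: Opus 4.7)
The plan is to follow the proof of Theorem \ref{thm:random-expectation} essentially unchanged, treating the deterministic angles $\theta_{\beta_{i}}$ as torus translations that are absorbed by translation invariance of the uniform measure on $\T^{r-1}$.

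First, by linearity of expectation applied to \eqref{eq:beta-shift}, each inner expectation takes the form $\mathbb{E}[F_{N}(\vec c + A\vec\vartheta)]$, where $\vec c \in \T^{r-1}$ is the deterministic shift built from the differences $\theta_{\beta_{i+1}}-\theta_{\beta_{i}}$ and $A$ is the $(r-1)\times M$ integer matrix whose rows are $\vec w_{i}=\vec v_{i}-\vec v_{i+1}$ with $\vec v_{i}=\sum_{j\in J_{i}}\vec e_{j}$. Combining Lemma \ref{lem:uniform-push-forward} with translation invariance of the Haar measure on $\T^{r-1}$, one obtains $\mathbb{E}[F_{N}(\vec c + A\vec\vartheta)] = \widehat{f}(0)/N^{r-1}$ whenever $A$ has maximal rank $r-1$, and the same sandwiching argument used in the proof of Lemma \ref{lem:expectation} gives $\mathbb{E}[|F_{N}(\vec c + A\vec\vartheta)|] \ll_{f} 1/N^{d}$ when the rank equals $d<r-1$; crucially, both estimates are uniform in the translation $\vec c$.

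Next I would count tuples. Since the rank depends only on the $J_{i}$, Proposition \ref{prop:expectation-counting-tuples} applies verbatim to the $J$-sum: there are $2^{Mr}(1+o_{r}(1))$ generic (full rank) distinct tuples and $o_{r}(2^{M(d+1)})$ non-generic tuples of rank $d<r-1$. The $\beta$-sum contributes exactly $N_{0}^{r}=r(n_{0})^{r}$ tuples, independently of the $J_{i}$. With $N=2^{M}N_{0}$, the generic piece evaluates to
\begin{equation*}
\frac{1}{N}\cdot N_{0}^{r}\cdot 2^{Mr}(1+o(1))\cdot \frac{\widehat{f}(0)}{N^{r-1}} \;=\; \widehat{f}(0)+o_{f}(1),
\end{equation*}
while the non-generic pieces contribute $\sum_{d=0}^{r-2} o_{r}(N_{0}^{\,r-d-1}) = o_{f,r,n_{0}}(1)$ since $N_{0}$ is a fixed constant. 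This establishes the mean statement.

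The second-moment asymptotic follows by exactly the same scheme applied to the iterated double sum in $(J_{1},\ldots,J_{r},J_{1}',\ldots,J_{r}')$ and $(\beta_{1},\ldots,\beta_{r},\beta_{1}',\ldots,\beta_{r}')$. Each inner expectation is controlled by the rank of the combined $2(r-1)\times M$ matrix $[A;A']$; in the generic case (rank $2(r-1)$) it equals $\widehat{f}(0)^{2}/N^{2(r-1)}$, and by the variance part of Theorem \ref{thm:random-expectation} the number of such $J$-tuples is $2^{2Mr}(1+o(1))$. Multiplying by $N_{0}^{2r}$ from the $\beta$-variables and by $1/N^{2}$ produces the main term $\widehat{f}(0)^{2}+o_{f}(1)$, while non-generic contributions are bounded analogously to the mean case. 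I do not expect any new obstacle: the only subtlety is the almost-sure distinctness of the shifted points $\theta_{\beta_{i}}+x_{J_{i}}$ under $\sumstar_{J}$, which is immediate since $x_{J_{i}}-x_{J_{j}}$ has a continuous distribution on $\T$ whenever $J_{i}\ne J_{j}$.
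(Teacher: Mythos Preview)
Your proposal is correct and follows essentially the same route as the paper: fix the $\beta$-tuple, use translation invariance of the uniform measure together with Lemma~\ref{lem:uniform-push-forward} and Lemma~\ref{lem:expectation} so that the per-tuple expectation depends only on the rank of $A$, then invoke Proposition~\ref{prop:expectation-counting-tuples} for the $J$-count and sum over the $N_0^{r}$ (resp.\ $N_0^{2r}$) $\beta$-configurations. The only cosmetic difference is that the paper phrases the per-$\beta$ contribution as $(1/N_0^r)(\widehat f(0)+o_f(1))$ before summing, whereas you multiply out directly; your accounting of the non-generic terms is likewise equivalent (note $d\ge 1$ always, so the $d=0$ summand is vacuous).
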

\begin{proof}
  If $n_{0}=1$ then
  Theorem~\ref{thm:random-expectation} immediately gives
  the result.

   For general $n_{0}>1$ we now consider the expectation of
   $\mathcal R_{r,n_0}^{*}(F_N)$.
%
First note that we, by definition, have
$J_{i} \neq J_{j}$ for all $1 \leq i \neq j \leq r$ for all terms of $\sum^{*}$
in (\ref{eq:beta-shift}).
With $N_{1} = 2^{M}$ and $N_0=r(n_0)$ we have $N= N_{0} \cdot N_{1}$;
and by the same argument used in Section~\ref{subsec:square
  free-cases-random-model} (the key point is that 
we may shift, and multiply by a bounded factor $1/N_{0}$
each coordinate of $F_{N}$)
we find that for each fixed $r$-tuple
$(\beta_{1}, \ldots, \beta_{r})$, we have
$$
\frac{1}{N}
\sumstar_{J_{1}, \ldots, J_{r} \subset \{1, \ldots, M\}}
\E
\left(
F_{N}( \theta_{\beta_{2}}-\theta_{\beta_{1}} +
  x_{J_{2}}-x_{J_{1}}, \ldots )
\right)
= (1/N_{0}^{r})  \left( \widehat{f}(0) + o_{f}(1) \right).
$$
Summing over the $N_{0}^{r}$
configurations of $r$-tuples $(\beta_{1}, \ldots, \beta_r)$ we find
that (\ref{eq:beta-shift}) equals $ \widehat{f}(0) + o_{f}(1)$.

The argument to bound the variance is similar --- the key point is
that for each fixed pair of collections of shifts
$\beta_{1}, \ldots, \beta_{r}$ and $\beta_{1}', \ldots, \beta_{r}'$
the corresponding main term, arising from pairs of ``generic'' subsets
$J_{1}, \ldots, J_{r}$ and $J_{1}', \ldots, J_{r}'$, can be evaluated
exactly as before. To bound the contribution from non-generic
choices of subsets, we can use the previous argument to obtain a
bound that is only worse by a factor of
$N_0^{2r}$.
\end{proof}

\section{Concluding the Proof}
\label{sec:concluding-proof}
First fix a compactly supported Schwartz function $f$.   By
Theorem~\ref{thm:corr2}, asymptotics for the average and the second
moment of the deterministic sums is given, up to negligible errors, by
the corresponding average and second moment of the random
 model (cf. \eqref{eq:randomeq1} \eqref{eq:randomeq2}.)
To pass to correlations over distinct angles we use a standard 
combinatorial sieving argument (cf. \cite[Section 4]{Rudnick-Sarnak}, note that \eqref{eq:randomeq1} and \eqref{eq:randomeq2} hold for arbitrary Schwartz functions $f$), which allows us to match the average and second moment of the $r$-correlation of \textit{distinct} angles of the deterministic sum to those of the corresponding random model $\mathcal R_{r,n_0}^*$ (as given in \eqref{eq:beta-shift}.) 
The 
first and second moments of the random model $\mathcal R_{r,n_0}^*$ are evaluated in
Section~\ref{sec:poiss-corr-hybr}
(cf. Lemma~\ref{lem:hybrid-moments}.)  Further, since the second
moment equals, up to smaller order terms, the square of the first
moment, the fluctuations around the mean is $o(1)$ for a full density
subsequence of $n \in \mathcal S$ (recall \eqref{eq:fixedn0}.)

Thus, once we remove a zero density subset of elements in
$\mathcal{S}$, the $r$-level correlation with respect to the fixed
Schwartz function $f$ is Poissonian. To show that the same holds for
all compactly supported Schwartz functions, we may take a countable and
dense (say in the $C^{\infty}$-norm) collection of compactly supported
Schwartz functions, and a standard diagonalization argument then gives
Poisson correlations (for all $r$) for some full density
sub-subsequence of elements in $\mathcal{S}$ {\em provided} that the
correlation functionals are continuous with respect to the
$C^{\infty}$-norm. This in turn can be seen as follows: if $f$ is
supported in some ball of radius $\delta$, then
$\limsup_{N\to \infty}\E(|\mathcal{R}_{r}^*(F_{N})|) \ll_{\delta}
\lVert f \rVert_{\infty}$.

Finally, since the $r$-level correlations are Poissonian along a full
density
subset of $\mathcal{S}$, Theorems~\ref{thm:poisson} and
\ref{thm:poissonjoint} are easily deduced using a combinatorial
argument (e.g. see \cite[Appendix A]{kurlberg-rudnick}.)


\end{document}